\pgfplotsset{compat=1.18}
\definecolor{myblue}{rgb}{0.00000,0.44700,0.74100}
\definecolor{myorange}{rgb}{0.8500, 0.3250, 0.0980}
\definecolor{myyellow}{rgb}{0.9290, 0.6940, 0.1250}
\definecolor{mypurple}{rgb}{0.4940, 0.1840, 0.5560}
\definecolor{mygreen}{rgb}{0.4660, 0.6740, 0.1880}
\let\OLDthebibliography\thebibliography
\renewcommand\thebibliography[1]{
  \OLDthebibliography{#1}
  \setlength{\parskip}{0.4pt}
  \setlength{\itemsep}{5pt plus 0.3ex}
}
\newtheorem{theorem}{Theorem}[section]
\newtheorem{theorem*}[theorem]{Theorem*}
\newtheorem{lemma}[theorem]{Lemma}
\newtheorem{proposition}[theorem]{Proposition}
\newtheorem{thm/conj}[theorem]{Theorem/Conjecture}
\theoremstyle{definition}
\newenvironment{example}
{\pushQED{\qed}\examplex}
{\popQED\endexamplex}
\newtheorem{definition}[theorem]{Definition}
\newtheorem{remark}[theorem]{Remark}
\theoremstyle{remark}
\renewcommand{\P}{\mathbb{P}}
\newcommand{\C}{\mathbb{C}}
\newcommand{\qq}{\mathbb{Q}}
\renewcommand{\L}{\mathcal{L}}
\renewcommand{\O}{\mathcal{O}}
\newcommand{\V}{V}
\newcommand{\X}{\mathcal{X}}
\DeclareMathOperator{\mult}{mult}
\DeclareMathOperator{\sing}{Sing}
\newcommand{\midcolon}{\, : \,}
\newcommand\restr[2]{{
  \left.\kern-\nulldelimiterspace 
  #1
  \vphantom{\big|} 
  \right|_{#2}
  }}
\title{\bf Euler Stratifications of Hypersurface Families}
\author{Simon Telen and Maximilian Wiesmann}
\date{}
\begin{document}

\maketitle

\begin{abstract}
    \noindent We stratify families of projective and very affine hypersurfaces according to their topological Euler characteristic. Our new algorithms compute all strata using algebro-geometric techniques. For very affine hypersurfaces, we investigate and exploit the relation to critical point computations. Euler stratifications are relevant in particle physics and algebraic statistics. They fully describe the dependence of the number of master integrals, respectively the maximum likelihood degree, on kinematic or model parameters.
\end{abstract}

\section{Introduction}

A nonzero homogeneous polynomial $F \in \mathbb{C}[x_0, \ldots, x_d]$ of degree $n$ defines an algebraic hypersurface $V(F) \subset \mathbb{P}^d$. We study the dependence of the (topological) Euler characteristic of $V(F)$ on the coefficients of $F$. More precisely, we consider a family $V(F(x,z)) \subset \mathbb{P}^d \times Z$ of hypersurfaces over an irreducible variety $Z$.
We seek to compute an explicit description of the loci in $Z$ on which the hypersurface defined by $F$ has a prescribed Euler characteristic.
An Euler stratification decomposes $Z$ into such loci. For a precise definition, see Section \ref{sec:2}.

We must clarify what we mean by an ``explicit description'' of these loci, or of the strata in an Euler stratification. It turns out that Euler strata are constructible, meaning essentially that they can be described by polynomials. We illustrate this for plane conics. 

\begin{example}[$d = n = 2$] \label{ex:conics_intro}
    Let $\mathbb{Z} = \mathbb{P}^5$ and consider a generic ternary quadric
    \begin{equation} \label{eq:Fconics} F(x_0,x_1,x_2,z) \, = \, z_0 x_0^2 + z_1 x_0x_1 + z_2 x_0x_2 + z_3 x_1^2 + z_4 x_1x_2 + z_5 x_2^2\, .\end{equation}
    The Euler characteristic of the curve $V(F(x,z))$ is either two or three. Indeed, if $V(F(x,z))$ is smooth and $F$ is irreducible then the Euler characteristic is $\chi(V(F(x,z))) = 2$. The same is true when $F = L^2$ is the square of a linear form $L$. If $V(F(x,z))$ is singular, then it is the union of two distinct lines, and it has Euler characteristic three. Algebraically, we have 
    \begin{equation} \label{eq:Mz} \chi(V(F(x,z))) = 3 \,\, \,  \Longleftrightarrow \, \,\, {\rm rank}\,  M(z) \, = \, 2, \quad \text{where} \quad M(z) \, = \, \begin{pmatrix}
        2z_0 & z_1 & z_2 \\ z_1 & 2z_3 & z_4\\
        z_2 & z_4 & 2z_5
    \end{pmatrix} .
    \end{equation}
    This defines a constructible subset of $\mathbb{P}^5$, consisting of the smooth points of the hypersurface 
    \begin{equation} \label{eq:conicdiscriminant} \nabla  \, = \, \{ z\in Z \, : \, \Delta \, = \, 0\}, \quad \text{where} \quad \Delta \, = \,  8\, z_0z_3z_5 - 2 \, z_0z_4^2 - 2 \, z_1^2z_5 + 2 \, z_1z_2z_4 - 2\, z_2^2z_3.  \end{equation}
    The singular locus $\nabla_{\rm sing}$ is cut out by the $2 \times 2$-minors of the matrix $M(z)$ above.
\end{example}

Euler strata in the parameter space $Z$ are closely related to the equisingular loci of our hypersurface families, i.e., the sets of parameter values for which the singularities of $V(F(x,z))$ are of the same type, in some appropriate sense. These loci are not well understood in general. For points in $\mathbb{P}^1$ $(d = 1)$, the problem comes down to computing coincident root loci \cite{chipalkatti2003equations}. Computations up to degree $n = 7$ are reported in \cite{lee2016duality}. For plane curves, the most well studied equisingular loci are Severi varieties. These are loci of nodal curves of fixed degree with a fixed number of nodes \cite{Fulton}.  For tropical~approaches to Severi varieties, see for instance \cite{dickenstein2017arithmetics, yang2013tropical}. Severi varieties for surfaces in $\mathbb{P}^3$ were~studied~in~\cite{Chiantini1998OnTS}. In 1988, Diaz and Harris wrote that the~equisingular stratification of plane curves of a fixed degree was out of reach \cite[p.~1]{diaz1988geometry}. Our paper addresses the problem using modern tools from computational algebraic geometry. We elaborate more on equisingular loci in a paragraph at the end of Section \ref{sec:2}.

A special case which deserves extra attention is when $F$ factors as 
\begin{equation}  \label{eq:factorscase}
F(x,z) = x_0\cdots x_d \, f(x,z). 
\end{equation} 
Let $T \subset \mathbb{P}^{d}$ be the dense torus of $\mathbb{P}^d$, i.e., $T \simeq (\mathbb{C}^*)^d \subset \mathbb{P}^d$. In this case, we have
\[ \chi(V(F)) \, = \, \chi(\mathbb{P}^d) - \chi(T) + \chi(V(F) \cap T) \, = \,  (d+1) + \chi(V(f) \cap T).\]
Hence, an Euler stratification for the projective hypersurface $V(F)$ is an Euler stratification for the very affine hypersurface $V(f) \cap T$. Very affine varieties are central in tropical geometry \cite{maclagan2021introduction}. They appear as statistical models in algebraic statistics \cite{clarke2023matroid,huh2014likelihood} and as integration spaces in particle physics \cite{fourlectures}. In these applications, the Euler characteristic of the very affine variety at hand plays a crucial role; see for instance \cite[Theorem 1.7]{huh2014likelihood} and \cite[Theorem 3.14]{fourlectures}. In statistics, the absolute value of the Euler characteristic coincides with the maximum likelihood degree of the corresponding model, which measures the algebraic complexity of maximum likelihood estimation \cite{catanese2006maximum}. In physics, $|\chi(V(f))\cap T|$ is the dimension of a vector space of integrals, and it measures the complexity of integration by parts reduction~\cite{agostini2022vector,bitoun2019feynman,fourlectures}.

In the situation of Equation \eqref{eq:factorscase}, we may consider equisingular loci for the zero set of $f(x,z)$ in $T$. If, in addition, the parameters $z$ appear as coefficients of $f$ (as in \eqref{eq:Fconics}) this leads us to study $A$-discriminants and principal $A$-determinants as defined by Gel'fand, Kapranov and Zelevinsky (GKZ) \cite[Chapters 9--10]{gelfand2008discriminants}. We illustrate this for our conics example. 

\begin{example} \label{ex:conicsveryaff}
    Let $f(x,z)$ be the polynomial in \eqref{eq:Fconics}. Its exponents are the columns of 
    \[ A \, = \, \begin{pmatrix}
        2 & 1 & 1 & 0 & 0 & 0 \\ 0 & 1 & 0 & 2 & 1 & 0\\ 0 & 0 & 1 & 0 & 1 & 2
    \end{pmatrix}.\]
    The cubic defining equation $\Delta$ of the hypersurface $\nabla$ in Example \ref{ex:conics_intro} is the $A$-discriminant associated to this matrix. By \cite[Theorem 13]{amendola2019maximum}, the Euler characteristic of $V(f(x,z)) \cap T$ equals $-4$, unless the principal $A$-determinant vanishes:
    \begin{equation} \label{eq:EAconics} E_A \, = \, \Delta \cdot (z_1^2-4z_0z_3)\cdot (z_2^2-4z_0z_5) \cdot (z_4^2-4z_3z_5) \cdot z_0 \cdot z_3 \cdot z_5 \, = \, 0.\end{equation}
    For a generic point in the $4$-dimensional variety $\nabla_\chi  = \{ z \in \mathbb{P}^5 \, : \, E_A(z) = 0\}$, the Euler characteristic is $\chi(V(f(x,z)) \cap T) = -3$. Overall, the possible Euler characteristics are $0, -1, -2, -3, -4$. In Section \ref{sec:5}, we will decompose $\nabla_\chi$ into 70 Euler strata. 
\end{example}

Esterov studies Euler characteristics and multisingularity strata in the GKZ setting in \cite{esterov2013discriminant,esterov2017characteristic}. In particular, in \cite[Section 1.2]{esterov2013discriminant} he coins the term ``Euler discriminant'', which was picked up in the context of Feynman integrals in \cite{fevola2024principal}. In this paper, the Euler discriminant variety $\nabla_\chi$ is the subvariety of $Z$ obtained as the closure of all $z$ for which the Euler characteristic $\chi(V(F(x,z)))$ takes non-generic values. If $\nabla_\chi$ is defined by a single equation $\Delta_\chi = 0$, then we call $\Delta_\chi$ the Euler discriminant polynomial. Since computing an Euler stratification is an iterated computation of Euler discriminants, understanding these discriminants is crucial to our story. The formulae for $\chi(V(F))$ stated by Dimca and Papadima \cite{dimca2003hypersurface} and Huh \cite{huh2012milnor,huh2013maximum} are a key to success for computing Euler discriminants using computer algebra. 

Euler stratifications are coarsenings of the more classical Whitney stratifications \cite{whitney1964tangents}. More precisely, by Thom's first isotopy lemma \cite[Proposition 11.1]{mather2012notes}, the strata of a Whitney stratification parametrize varieties of constant topological type, and thus in particular of constant Euler characteristic. Brown's definition of the Landau variety in \cite[Definition 54]{brown2009periods} is based on Thom's isotopy lemma; it is the union of all Whitney strata of codimension one. This variety captures the singular locus of Feynman integrals, viewed as multi-valued functions of kinematic parameters. The Landau variety contains the Euler discriminant, but the two do not always coincide, see Examples \ref{ex:counterwhitney1} and \ref{ex:counterwhitney2}. However, in physics, they often do.

Recent efforts by Helmer and Nanda have lead to symbolic algorithms for computing Whitney stratifications \cite{helmer2023conormal}. This was applied to Feynman integrals and Landau varieties in \cite{helmer2024landau}. That paper shows that the algorithms work well in small examples, but they run out of steam for more challenging integrals, such as those tackled in \cite{fevola2024principal}. However, the efficient symbolic-numerical methods in \cite{fevola2024principal} often lead to incomplete results, resulting in a variety that is strictly contained in the Euler discriminant, called the Principal Landau determinant. 

\vspace{-0.2cm}
\paragraph{Contributions and outline.} Section \ref{sec:2} gives a definition of Euler stratifications (Definition \ref{def:ES}), proves existence for a general class of hypersurface families (Lemma \ref{prop:Xproj}) and presents some first examples. We discuss the relation to Whitney stratifications and multisingularity strata.
Section \ref{sec:3} revisits coincident root loci for binary forms. We summarize known results and deduce an explicit characterization of the Euler stratification (Theorems \ref{thm:StratiRootsProj} and \ref{thm:strataXveryaffP1}). We also present a new algorithm for the Euler stratification of hyperplane sections of smooth projective curves (Algorithm \ref{alg:stratifyPointsCurve}). In Section \ref{sec:4}, we develop general algorithms (Algorithms \ref{alg:stratify_proj}, \ref{alg:polardisc} and \ref{alg:eulerdiscproj}) for computing the Euler stratification of projective hypersurface families. For families of plane curves with isolated singularities, we prove that the Euler discriminant equals the polar discriminant (Proposition \ref{prop:PolarEqEuler}). Section \ref{sec:5} is dedicated to the very affine setting. We prove that the degree of the Gauss map counts the critical points of a logarithmic potential (Proposition \ref{prop:ncrit}). We relate this to June Huh's result (Theorem \ref{thm:huh2}) using likelihood degenerations \cite{agostini2023likelihood}. We prove two structural results in Theorem \ref{thm:Zsclosed} and Proposition \ref{prop:strictcontained}, and present the specialized Algorithm \ref{alg:eulerdiscveryaff} for computing Euler discriminants in this setting. In Section \ref{sec:6}, we present a gallery of examples, including coincident root loci of binary octics (Section \ref{sec:61}), matroid stratifications of bilinear forms (Section \ref{sec:62}), Landau varieties of Feynman integrals (Section \ref{sec:63}), maximum likelihood stratifications for toric Fano varieties (Section \ref{sec:64}) and Hirzebruch surfaces (Section \ref{sec:65}).  Our code and computational results are available online at MathRepo:
\begin{equation} \label{eq:mathrepo} \tag{$\star$}
\text{\url{https://mathrepo.mis.mpg.de/EulerStratifications}}
\end{equation}

\newpage

\section{Definitions and first examples} \label{sec:2}

Our first task is to define Euler stratifications, and to show that they exist for sufficiently general families of varieties. A \emph{quasi-projective variety} is a subset $X \subset \mathbb{P}^d$ of projective space which can be written as $X = V \setminus W$, where $V,W \subset \mathbb{P}^d$ are closed subvarieties. 
\begin{definition} \label{def:ES}
    Let $\mathcal{X}, Z$ be quasi-projective varieties, with $Z$ irreducible. Consider a surjective morphism $\pi: \mathcal{X} \rightarrow Z$ whose fibers have constant dimension. An \emph{Euler stratification} of $\pi$ is a finite set $\mathscr{S}$ of quasi-projective subvarieties of $Z$ such~that 
\begin{enumerate}
\setlength\itemsep{0em}
    \item when $S \neq S' \in \mathscr{S}$, then $S \cap S' = \emptyset$, and $\bigsqcup_{S \in \mathscr{S}} S = Z$,
    \item for each stratum $S \in \mathscr{S}$, the closure $\overline{S}$ is a union of strata: $\overline{S} = \bigsqcup_{S' \subseteq \overline{S}} S'$, 
    \item the Euler characteristic of the fiber $\chi(\pi^{-1}(z))$ is constant for $z \in S$. 
\end{enumerate}
The set $\mathscr{S}$ is partially ordered as follows: $S \preceq S'$ if and only if $\overline{S}\subseteq \overline{S'}$.
\end{definition}

The quasi-projective subvarieties $S$ in an Euler stratification $\mathscr{S}$ are called \emph{(Euler) strata}, and their closures $\overline{S}$ are the \emph{closed (Euler) strata}. Euler strata need not be irreducible. 

We make the following concrete choices for $\mathcal{X}$ and $Z$. Let $F \in \mathbb{C}[z][x]$ be a bi-homogeneous polynomial in the variables $x = (x_0, \ldots, x_d)$ and parameters $z = (z_0, \ldots, z_m)$. Let $Z \subseteq \mathbb{P}^m$ be any irreducible quasi-projective subvariety. We define
\begin{equation} \label{eq:Xproj} \mathcal{X}_F \, = \, \{ (x, z) \in \mathbb{P}^d \times Z \, : \, F(x,z) \neq  0 \}. \end{equation}
We want to stratify the family of hypersurface complements given by the projection \begin{equation} \label{eq:piproj} \pi_F: \mathcal{X}_F \rightarrow Z, \quad (x, z) \longmapsto z.
\end{equation}
We denote the fibers of this map by $\X_{F,z} = \pi_F^{-1}(z)$. We assume that the set $\{z \in Z \, : \, F(x,z) \equiv 0 \}$ is empty, so that all fibers are $d$-dimensional. 
\begin{remark} \label{rem:closedfibers}
    We may alternatively consider the family of hypersurfaces \[ \mathcal{X}_F^c = \{(x,z) \in \mathbb{P}^d \times Z \, : \, F(x,z) = 0 \} \]
    with closed fibers $\X_{F,z}^c = \mathbb{P}^d \setminus \X_{F,z}$. The equality $\chi(\X_{F,z}) = \chi(\mathbb{P}^d) - \chi(\X_{F,z}^c) = d + 1 - \chi(\X_{F,z}^c)$ implies that an Euler stratification of $\pi_F$ is one of $\pi_F^c : \mathcal{X}_F^c \rightarrow Z$ and vice versa.  
\end{remark}

\begin{example}[Conics in $\mathbb{P}^2$] \label{ex:conicsinP2}
Consider the family of conics $\mathcal{X}_F = \{ (x,z) \in \mathbb{P}^2 \times \mathbb{P}^5 \, : \, F(x;z) \neq  0 \}$, where $F$ is as in \eqref{eq:Fconics}, with its coordinate projection $\pi: \mathcal{X}_F \rightarrow \mathbb{P}^5$. By the discussion in Example \ref{ex:conics_intro}, the poset $\mathscr{S}$ consists of three strata, of dimensions five, four, and two. The closed four-dimensional stratum is given by the discriminant $\Delta = 0$, with $\Delta$ as in \eqref{eq:conicdiscriminant}. The smallest stratum is, up to scaling coordinates, the second Veronese embedding of $\mathbb{P}^2$, whose binomial ideal is given by the $2 \times 2$-minors of $M(z)$ from \eqref{eq:Mz}. Notice that there is no stratum of dimension three, and the Euler characteristic drops by one on the discriminant.
\end{example}

Example \ref{ex:conicsinP2} illustrates that the loci $Z_k = \{ z \in Z\, : \, \chi(\X_{F,z}) = k \}$ of constant Euler characteristic are not necessarily quasi-projective, even when $\mathcal{X}_F$ and $Z$ are quasi-projective. We will show that they are always \emph{constructible}. A constructible set is a finite union of quasi-projective varieties $(V_1 \setminus W_1) \cup \cdots \cup (V_\ell \setminus W_\ell)$. 

\begin{proposition} \label{prop:Xproj}
    Let $F,\mathcal{X}_F, Z$ be as in \eqref{eq:Xproj} and let $\pi_F : \mathcal{X}_F \rightarrow Z$ be the coordinate projection \eqref{eq:piproj} with fibers $\X_{F,z} = \pi_F^{-1}(z)$. For any integer $k$, the set $Z_k = \{ z \in Z \, :\, \chi(\X_{F,z}) = k \}$ is constructible. In particular, there exists an Euler stratification of the map $\pi_F$.
\end{proposition}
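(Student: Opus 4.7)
The plan is to invoke the existence of Whitney stratifications for morphisms together with Thom's first isotopy lemma, and to check that the induced stratification of $Z$ is an Euler stratification in the sense of Definition \ref{def:ES}. Constructibility of each $Z_k$ then follows immediately as a byproduct.

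First I would pass to the closed family $\pi_F^c : \mathcal{X}_F^c \to Z$ via Remark \ref{rem:closedfibers}, since the identity $\chi(\mathcal{X}_{F,z}) = (d+1) - \chi(\mathcal{X}_{F,z}^c)$ shows that an Euler stratification of $\pi_F$ is one of $\pi_F^c$ and vice versa. Now $\mathcal{X}_F^c$ is closed in $\mathbb{P}^d \times Z$, so $\pi_F^c$ is a projective, hence proper, morphism of quasi-projective complex algebraic varieties. By the classical existence theorem for Whitney stratifications of morphisms, there exist Whitney stratifications of $\mathcal{X}_F^c$ and of $Z$ such that $\pi_F^c$ sends each stratum of the source submersively onto a stratum of the target. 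Write $\{W_1, \ldots, W_N\}$ for the resulting decomposition of $Z$ into quasi-projective subvarieties.

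Next I would check the three axioms of Definition \ref{def:ES} for $\mathscr{S} = \{W_1, \ldots, W_N\}$. Condition (1) holds because Whitney stratifications are partitions by locally closed subsets, and condition (2) is exactly the frontier condition built into Whitney regularity. For condition (3), Thom's first isotopy lemma \cite[Proposition 11.1]{mather2012notes} applied to the restriction of the proper map $\pi_F^c$ over $W_i$ shows that this restriction is a locally trivial topological fibre bundle; in particular $\chi(\mathcal{X}_{F,z}^c)$, and therefore $\chi(\mathcal{X}_{F,z})$, is constant on $W_i$. Finally, $Z_k$ is the finite union of those quasi-projective $W_i$ on which this constant value equals $k$, and so it is constructible.

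The main obstacle is the invocation of the Whitney stratification theorem for morphisms, a classical but nontrivial input; the paper already relies on this circle of ideas when discussing Whitney stratifications and Landau varieties in the introduction, so citing the standard references \cite{whitney1964tangents, mather2012notes} is what I would do. Once this tool is granted, the remaining verification is purely formal bookkeeping with locally closed sets and additivity of the Euler characteristic.
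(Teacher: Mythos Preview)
Your argument is correct, but it takes a genuinely different route from the paper's own proof. The paper does not appeal to Whitney stratifications here; instead it uses the Dimca--Papadima formula (Theorem~\ref{thm:Dimca-Papadima}) to write
\[
\chi(\mathcal{X}_{F,z}) \,=\, d - \chi(\mathcal{X}_{F,z}^c \cap H) + (-1)^d \deg \nabla F(x,z),
\]
shows directly via Chevalley's theorem that the loci of constant $\deg \nabla F(x,z)$ are constructible, and then runs an induction on $d$ using the hyperplane section $\mathcal{X}_{F,z}^c \cap H$. In fact, immediately after its proof the paper sketches precisely your Whitney-stratification argument as an alternative, noting that it applies more generally to any proper morphism but that a statement matching the present setup is hard to cite cleanly. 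What your approach buys is conceptual simplicity and greater generality; what the paper's approach buys is the explicit identity \eqref{eq:formulaDP}, which is reused in Section~\ref{sec:4} to design algorithms for computing Euler discriminants. So both are valid, but the paper's proof is chosen for its downstream algorithmic payoff rather than for brevity.
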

We prove Proposition \ref{prop:Xproj} using a result of Dimca and Papadima \cite[Theorem 1]{dimca2003hypersurface}.

\begin{theorem} \label{thm:Dimca-Papadima}
    Let $F \in \mathbb{C}[x_0, \ldots, x_d]$ be any non-constant homogeneous polynomial, and let $D(F) = \{ x \in \mathbb{P}^d \, : \, F(x) \neq 0 \}$ be its hypersurface complement. Consider the Gauss~map 
    \[ \nabla F\, : D(F) \longrightarrow \mathbb{P}^d, \quad x \longmapsto \left( \frac{\partial F}{\partial x_0}(x):\cdots: \frac{\partial F}{\partial x_d}(x) \right ).\] 
    We have $\deg \nabla F = (-1)^d \cdot \chi(D(F) \setminus H)$, where $H \subset \mathbb{P}^d$ is a generic hyperplane.
\end{theorem}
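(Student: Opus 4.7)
The plan is to interpret the degree of $\nabla F$ as a count of critical points of a rational ``master function'' on the smooth affine variety $D(F)\setminus H$, and then to invoke a Morse-theoretic principle to equate that count with $(-1)^d\chi(D(F)\setminus H)$.

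First I would check that $\nabla F$ really is a morphism on all of $D(F)$. If $F$ has degree $n\geq 1$, Euler's identity $\sum_{i=0}^d x_i\,\partial F/\partial x_i = n F(x)$ implies $\nabla F(x)\neq 0$ whenever $F(x)\neq 0$, so $\deg\nabla F$ is well defined as the cardinality of a generic fiber. Fix a generic $[a]=[a_0:\cdots:a_d]\in\mathbb{P}^d$, and put $L(x)=a_0x_0+\cdots+a_dx_d$, $H=V(L)$. A point $x\in D(F)$ lies in $(\nabla F)^{-1}([a])$ iff $\partial F/\partial x_i(x)=\lambda a_i$ for some scalar $\lambda$; pairing with $x$ and using Euler's identity gives $nF(x)=\lambda L(x)$, and since $F(x)\neq 0$ this forces $L(x)\neq 0$. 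Hence the fiber is contained in $U:=D(F)\setminus H$.

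Next I would identify this fiber with the critical set of the rational function $\Phi=F/L^n$ on $U$. A direct computation gives $d\Phi = L^{-(n+1)}\bigl(L\,dF - nF\,dL\bigr)$, so $d\Phi=0$ on $U$ is equivalent to $L(x)\,\partial F/\partial x_i(x) = n F(x)\,a_i$ for every $i$, which is exactly the condition $\nabla F(x)=[a]$. A standard Bertini/transversality argument on the incidence variety $\{(x,[a]) : \nabla F(x)=[a]\} \subset D(F)\times\mathbb{P}^d$ shows that for generic $[a]$ the fiber is reduced and each critical point is nondegenerate; thus $\deg\nabla F$ equals the number of critical points of $\Phi$ on $U$.

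Finally, I would invoke the principle that for a smooth affine variety $U$ of dimension $d$ and a sufficiently generic multiplicative function $\Phi:U\to\mathbb{C}^*$ (the exponential of a generic closed logarithmic $1$-form), the number of nondegenerate critical points equals $(-1)^d\chi(U)$. Applied to $U=D(F)\setminus H$ and $\Phi=F/L^n$, with genericity of $H$ guaranteeing genericity of $\Phi$, this yields $\deg\nabla F=(-1)^d\chi(D(F)\setminus H)$, as desired. The main obstacle is the last step: controlling the critical-point count on a smooth affine variety by the Euler characteristic. One route is Morse theory on a log-resolution of $(\mathbb{P}^d,V(FL))$, using the real part of $\log\Phi$ and using the genericity of $H$ to rule out critical points escaping to the boundary divisor; another is an algebraic index computation via the characteristic cycle of $\mathcal{O}_U$, or a direct appeal to Huh's theorem on Euler characteristics of very affine varieties cited earlier in the paper.
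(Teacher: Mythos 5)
The paper does not prove this statement: it is quoted verbatim from Dimca--Papadima \cite[Theorem 1]{dimca2003hypersurface}, so there is no internal proof to compare against; your attempt has to stand on its own. Its first three steps do: Euler's identity shows $\nabla F$ is defined on all of $D(F)$, that the fiber over a generic $[a]$ avoids $H=V(L)$, and that this fiber is exactly the critical locus of $\Phi=F/L^n$ on $D(F)\setminus H=D(FL)$; generic reducedness of the fiber is fine in characteristic $0$. This reduction is essentially the same computation the paper performs later for its own purposes (Proposition \ref{prop:ncrit} and the form \eqref{eq:dlog1}), and it is correct.

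The genuine gap is the last step. The ``principle'' you invoke --- that a sufficiently generic multiplicative function on a smooth affine $d$-fold has exactly $(-1)^d\chi$ nondegenerate critical points --- is not available in the form you need. Huh's theorem (the paper's Theorem \ref{thm:huh2}), and its relatives (Orlik--Terao, Varchenko, Franecki--Kapranov), require the \emph{exponents} of the master function to be generic and the variety to be very affine. Here the exponent vector is the fixed, highly non-generic $(1,-n)$ on the two factors $F$ and $L$, the only genericity is in the \emph{coefficients} of $L$, and $D(FL)$ need not be very affine (take $F$ irreducible). So the ``direct appeal to Huh's theorem'' is not licit, and the sentence ``genericity of $H$ guarantees genericity of $\Phi$'' is exactly the assertion that has to be proved --- it is the entire content of the theorem, not a reduction of it. Your alternative routes (Morse theory for $\mathrm{Re}\log\Phi$ on a log resolution, or a characteristic-cycle index computation) are plausible proof strategies, but they are not carried out, and the hard point in each is precisely the one you defer: showing that for generic $H$ no critical points escape to the boundary divisor $V(F)\cup H$, i.e.\ that the count equals, rather than merely bounds, $(-1)^d\chi(D(F)\setminus H)$. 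This is where Dimca and Papadima do real work, via a Lefschetz-type cell-attachment comparison of $D(F)$ with its generic hyperplane section. As written, your argument reduces the theorem to an equivalent unproved statement.
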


\begin{proof}[Proof of Proposition \ref{prop:Xproj}]
    The formula for $\chi(D(F) \setminus H)$ from Theorem \ref{thm:Dimca-Papadima} implies that
    \begin{align*}
        (-1)^d \cdot \deg \nabla F(x,z) \, &= \, \chi({\cal X}_{F,z} \setminus H) \\
        & = \, \chi({\cal X}_{F,z})- \chi({\cal X}_{F,z} \cap H)\\
        & = \, \chi(\X_{F,z}) - (\chi(\mathbb{P}^{d-1}) - \chi(\X_{F,z}^c \cap H))
    \end{align*}
    for all $z \in Z$ (notice that ${\cal X}_{F,z} = D(F(x,z))$ for any $z \in Z$). Using $\chi(\mathbb{P}^{d-1}) = d$ we find
    \begin{equation} \label{eq:formulaDP}
        \chi(\X_{F,z}) \, = \, d - \chi(\X_{F,z}^c \cap H) + (-1)^d \cdot \deg \nabla F(x,z).
    \end{equation}
    For any $k$, we claim that the set $W_k = \{ z \in Z\, : \, \deg \nabla F(x,z) = k \}$ is constructible. To see this, note that the ramification locus of $\{ (x,z,b) \in {\cal X}_F \times \mathbb{P}^d_b \, :\, \nabla F(x,z) = b \} \rightarrow Z \times \mathbb{P}^d_b$ is closed, and its image $B \subset Z \times \mathbb{P}^d_b$ is constructible by Chevalley's theorem. 
    Let $B' \subset Z$ be the constructible subset consisting of the points $z \in Z$ whose fiber along the projection $B \rightarrow Z$ is $d$-dimensional. By construction, we have $Z \setminus B' = W_{n^*}$, where $n^*$ is the generic degree of $\nabla F(x,z)$ for $z \in Z$. The same argument applies when we replace $Z$ by any of the irreducible components of $\overline{B'} \subset Z$. The claim follows by iterating this process.
    
    When $d = 1$, we have $\X_{F,z} \cap H = \emptyset$ and $Z_k$ is constructible by \eqref{eq:formulaDP}. We proceed by induction on $d$: if the statement is true in dimension $d-1$, then $\{ z \in Z \, : \, \chi(\X_{F,z}^c \cap H) = k \}$ is constructible for any $k$ (Remark \ref{rem:closedfibers}), which implies by \eqref{eq:formulaDP} that $Z_k$ is~constructible. 
\end{proof}

As mentioned in the Introduction, the applications we have in mind require Euler stratifications of families of \emph{very affine} hypersurface complements. That is, we are interested in the Euler characteristic of the complement of the zero locus of a polynomial $f$ in the torus 
\[ T \, = \,  D(x_0 \cdots x_d) \, = \,  \{(x_0: \cdots: x_d) \in \mathbb{P}^d \, : \, x_0 \cdots x_d \neq 0 \} \, \simeq \,  (\mathbb{C}^*)^d . \] 
We continue to assume that $Z \subseteq \mathbb{P}^m$ is an irreducible quasi-projective variety, and we set 
\begin{equation} \label{eq:Xveryaff} \mathcal{X}^*_f \, = \, \{ (x, z) \in T \times Z \, : \, f(x,z) \neq 0 \} \, = \, {\cal X}_F,
 \end{equation} 
where $F = x_0 \cdots x_d \, f$. 
The following is a consequence of Proposition \ref{prop:Xproj} and ${\cal X}^*_f = {\cal X}_F$. 
\begin{lemma} \label{lem:Xveryaff}
    Let $f \in \mathbb{C}[z][x]$ be any bi-homogeneous polynomial and let  $Z, \mathcal{X}_f^*$ be as in \eqref{eq:Xveryaff}. The coordinate projection $\pi_f: \mathcal{X}_f^* \rightarrow Z$ admits an Euler stratification. 
\end{lemma}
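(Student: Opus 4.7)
The approach is to reduce the statement to an immediate application of Proposition \ref{prop:Xproj}. The key observation, already recorded in equation \eqref{eq:Xveryaff}, is the set-theoretic identity $\mathcal{X}_f^* = \mathcal{X}_F$ for the bi-homogeneous polynomial $F = x_0 \cdots x_d \cdot f$. Under this identification, the coordinate projection $\pi_f : \mathcal{X}_f^* \to Z$ is literally equal to the coordinate projection $\pi_F : \mathcal{X}_F \to Z$. So any Euler stratification of $\pi_F$ is an Euler stratification of $\pi_f$, and the lemma reduces to checking that Proposition \ref{prop:Xproj} applies to $F$.

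The verification is routine. First, $F = x_0 \cdots x_d \cdot f$ is bi-homogeneous in $(x,z)$: the factor $x_0 \cdots x_d$ is homogeneous of degree $d+1$ in $x$ and independent of $z$, and $f$ is bi-homogeneous by hypothesis, so $F$ is bi-homogeneous with $x$-degree $d+1+\deg_x f$ and $z$-degree $\deg_z f$. Second, I need that the locus $\{z \in Z : F(x,z) \equiv 0\}$ is empty, which is the running assumption under which Proposition \ref{prop:Xproj} was stated. Since $x_0 \cdots x_d \not\equiv 0$ as a polynomial in $x$, this locus equals $Z_0 := \{z \in Z : f(x,z) \equiv 0\}$. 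The lemma implicitly assumes (as does the set-up for $F$ in the previous proposition) that this degeneracy does not occur, so $Z_0 = \emptyset$ and the hypothesis is met. If one wished to drop the assumption, one would replace $Z$ by the nonempty Zariski-open subset $Z \setminus Z_0$ (noting that $Z_0$ is a closed subvariety cut out by the coefficients of $f$ regarded as a polynomial in $x$, and is proper in $Z$ provided $f \not\equiv 0$), apply Proposition \ref{prop:Xproj} there, and treat $Z_0$ as a separate union of strata refined inductively by dimension.

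There is no real obstacle: once the identification $\mathcal{X}_f^* = \mathcal{X}_F$ is recorded, the lemma is an immediate corollary of Proposition \ref{prop:Xproj}, and the only thing to spell out is that multiplying $f$ by $x_0 \cdots x_d$ preserves the hypotheses of that proposition.
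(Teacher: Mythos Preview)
Your proof is correct and follows exactly the paper's approach: the paper simply states that the lemma is a consequence of Proposition~\ref{prop:Xproj} together with the identity $\mathcal{X}_f^* = \mathcal{X}_F$ for $F = x_0\cdots x_d\, f$. Your additional remarks on bi-homogeneity and the degeneracy locus $\{z : F(x,z)\equiv 0\}$ are more explicit than the paper's one-line justification, but the underlying argument is identical.
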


\paragraph{Whitney stratifications.}
    Proposition \ref{prop:Xproj} can also be proved via the theory of \emph{Whitney stratifications}, originally developed by Thom \cite{thom1964local} and Whitney \cite{whitney1964tangents}. This proof strategy works more generally when $\pi:{\cal X} \rightarrow Z$ is a proper morphism of (abstract) varieties. A statement that fits our scope precisely is hard to find in the literature, and a self-contained explanation would be too much of a digression.  Since the identity \eqref{eq:formulaDP} is useful in later sections, we chose to include the proof above and limit ourselves to a sketch of the general argument. 

    In \cite[Theorem 19.2]{whitney1964tangents}, Whitney states that every \emph{variety} admits what is now called a Whitney stratification. Here \emph{variety} means complex analytic variety. The fact that every constructible set admits a Whitney stratification whose strata are again constructible is stated explicitly and with an outline of proof in \cite[pp.\ 336--337]{wall2006regular}. A Whitney stratification of a proper morphism $\pi: {\cal X} \rightarrow Z$ (such as our morphism $\pi^c_F: {\cal X}_F^c \rightarrow Z$) is a Whitney stratification of ${\cal X}$ and $Z$ such that $\pi$ maps each stratum $S_{\cal X}$ of ${\cal X}$ into a stratum $S_Z$ of $Z$, and the restriction $\pi_{S_{\cal X}}: S_{\cal X} \rightarrow S_Z$ is a submersion \cite[Definition 6.1]{helmer2023conormal}. By Thom's first isotopy lemma \cite[Proposition 11.1]{mather2012notes}, the topology of the fibers of $\pi$ is constant on each stratum $S_Z$. In particular, the Euler characteristic is constant. It follows that the Whitney stratification of $\pi$ is an Euler stratification. The converse is not true: a priori, Euler strata are unions of Whitney strata. We provide an example in the projective and very affine case.

\begin{example} \label{ex:counterwhitney1}
Consider the following family of cuspidal cubic plane curves:
        \[\X^c_F = \left\{ (x,z) \in \P^2\times \P^1 \midcolon z_0x_0^3 + z_1x_1^2x_2 = 0 \right\}.\]
For $z\notin \{(1:0),(0:1)\}$, the fiber $(\pi^c_F)^{-1}(z)$ has a cusp singularity at the origin, implying $\chi\left( (\pi^c_F)^{-1}(z) \right) = 2$ (see Example \ref{ex:Chi(cusp)} below). For $z=(1:0)$, the cubic degenerates into a triple line, also with Euler characteristic 2. Thus, an Euler stratification for $\pi^c_F$ is given by $\{(0:1)\} \sqcup (\P^1 \setminus \{(0:1)\})$, whereas $\{(1:0)\}$ necessarily forms a separate Whitney stratum.
\end{example}

\begin{example}\label{ex:counterwhitney2}
Consider the family of very affine curve complements
        \[(\X^*_f)^c = \left\{ (x,z) \in T \times \P^1 \midcolon (x_1 + x_2)(z_0x_1 + z_1x_2) = 0 \right\}.\]
        Here, $T$ is the torus $(\mathbb{C}^*)^2 \simeq T \subset \mathbb{P}^2$. For $z\notin \{(1:0),(1:1),(0:1)\}$, the fiber of $({\cal X}_f^*)^c \rightarrow \mathbb{P}^1$ consists of two lines intersecting at the origin. For $z\in \{(1:0),(1:1),(0:1)\}$, the fiber consists of only one line through the origin. The Euler characteristic is always zero, hence there is only a single Euler stratum. However, the topological types are different.
\end{example}

\paragraph{Multisingularity strata.} \label{par:multisingularity}
As seen in previous examples, Euler stratifications are closely related to the singularity structure of $\X$. In the following we recall some basic notions of singularity theory following \cite{greuel2007introduction} and elaborate on the relation to Euler stratifications.\par 
Let $U\subseteq \C^d$ be an analytic open subset, $f\colon U\rightarrow \C$ a holomorphic function and $X = \V(f) \subset U$ the hypersurface defined by $f$. The set of singular points of $X$ is given by \[ \sing(X) \, = \,  \left\{ x\in U \midcolon f(x) = \frac{\partial f}{\partial x_1}(x) = \dots = \frac{\partial f}{\partial x_d}(x) = 0 \right\}. \] A point $x\in \sing(X)$ is called an isolated singularity if there exists a neighborhood $V$ of $x$ such that $(\sing(X) \cap V) \setminus \{x\} = \emptyset$. In singularity theory, isolated singularities are typically studied up to right equivalence (analytic change of coordinates) or contact equivalence (isomorphism of factor algebras). Let $j(f)$ denote the ideal sheaf $ \langle \frac{\partial f}{\partial x_1}, \dots, \frac{\partial f}{\partial x_d} \rangle \cdot \O(U)$ and define the algebra $M_{f,x} := \O_{\C^d,x} / j(f)\O_{\C^d,x}$, called the \emph{Milnor algebra} of $f$ at $x$. Its dimension as a $\C$-vector space is the \emph{Milnor number} of $f$ at $x$. We write $\mu(f,x) = \dim_{\C} M_{f,x}$.   This number is a topological invariant of the singularity and plays a crucial role in our considerations. Note that $\mu(f,x) > 0$ if and only if $x$ is a singular point of $f$ and that $\mu(f,x)$ is finite by an application of the Hilbert--R\"uckert Nullstellensatz. If $X$ has only isolated singularities $y_1,\dots,y_m$, then we define the \emph{total Milnor number} to be $\mu(X) := \sum_{i=1}^m \mu(f,y_i)$. The connection to Euler stratifications is established by the following statement \cite[Corollary~1.7]{parusinski1988generalization}.

\begin{proposition}
    Let $M$ be a $d$-dimensional smooth complex projective variety and let $\mathcal{L}$ be a line bundle on $M$. For two sections $s_1,s_2\in H^0(M,\mathcal{L})$, let $X_1=V(s_1)$ and $X_2=V(s_2)$ denote their respective zero loci. If $X_1$ and $X_2$ have only isolated singularities, then we have the equality $\mu(X_1) - \mu(X_2) = (-1)^d (\chi(X_1) - \chi(X_2))$.
\end{proposition}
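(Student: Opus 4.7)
The plan is to compare both $X_1$ and $X_2$ to a common smooth reference $X_0 = V(s_0)$ in the linear system $|\mathcal{L}|$ and to show that the ``defect'' $\chi(X_0) - \chi(X_i)$ is precisely $(-1)^{d-1}\mu(X_i)$. Subtracting the resulting two identities eliminates $\chi(X_0)$ and produces the claim. To set this up I would first argue, via Bertini (possibly after replacing $M$ by a blow-up resolving the base locus of $|\mathcal{L}|$, which does not disturb the isolated singularities of $X_1$ and $X_2$), that a smooth member $X_0$ exists, and then choose a generic holomorphic arc $s(t) \in H^0(M,\mathcal{L})$ joining $s_1$ to $s_0$ along which the only singular zero locus is at $t = 0$.

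The core local input is Milnor's fibration theorem. Around each isolated singular point $y_i$ of $X_1$, trivialize $\mathcal{L}$ on a small analytic ball $B_i \subset M$ so that $s_1$ becomes a holomorphic function $f_i : B_i \to \mathbb{C}$ with isolated critical point at $y_i$. For $\epsilon$ sufficiently small, the Milnor fiber $V(f_i - \epsilon) \cap B_i$ is homotopy equivalent to a bouquet of $\mu(f_i, y_i)$ real $(d-1)$-spheres, hence has Euler characteristic $1 + (-1)^{d-1}\mu(f_i, y_i)$, whereas the singular germ $X_1 \cap B_i$ is contractible to its cone point and has Euler characteristic $1$. Away from $\bigsqcup_i B_i$ the arc consists of smooth hypersurfaces, and Ehresmann's fibration theorem produces a $C^\infty$-diffeomorphism between $X_0$ and $X_1$ on the complement. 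Combining these inputs through additivity of $\chi$ on the decomposition $X_j = (X_j \setminus \bigsqcup_i B_i) \sqcup \bigsqcup_i (X_j \cap B_i)$ yields
\begin{equation*}
\chi(X_0) \, = \, \chi(X_1) + (-1)^{d-1}\mu(X_1),
\end{equation*}
and symmetric reasoning applied to an arc from $s_2$ to $s_0$ gives $\chi(X_0) = \chi(X_2) + (-1)^{d-1}\mu(X_2)$. Subtracting the two identities yields $\chi(X_1) - \chi(X_2) = (-1)^{d-1}(\mu(X_2) - \mu(X_1))$, which rearranges to the claimed $\mu(X_1) - \mu(X_2) = (-1)^d(\chi(X_1) - \chi(X_2))$.

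The main technical obstacle is producing the smooth reference $X_0$ and the smoothing arc when $|\mathcal{L}|$ is not sufficiently generic, for instance if every member of $|\mathcal{L}|$ is singular along a common base locus. Only the behavior near the finitely many singular points $y_i$ really matters, so the argument can be localized: one perturbs $s_1$ and $s_2$ in small balls $B_i$ by generic local sections vanishing to first order at $y_i$, and reassembles the perturbations to a global smoothing that is effective at each $y_i$. This localization is essentially the content of Parusi\'nski's relative treatment in \cite{parusinski1988generalization}, which packages the Milnor-fiber bookkeeping in the form needed for the full generality stated.
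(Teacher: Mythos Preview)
The paper does not give its own proof of this proposition: it is simply quoted as \cite[Corollary~1.7]{parusinski1988generalization}, with no argument supplied. Your sketch is therefore strictly more detailed than what the paper offers, and the route you outline---reduce to a smooth reference member $X_0$ of $|\mathcal{L}|$, compute the local defect at each isolated singularity via Milnor's bouquet-of-spheres description of the nearby smooth fiber, and globalize with Ehresmann---is the standard mechanism underlying the cited result. You correctly identify the only genuine subtlety, namely the possible non-existence of a globally smooth member of $|\mathcal{L}|$, and correctly point to the local perturbation argument that circumvents it; this is precisely the generality that Parusi\'nski's treatment provides.

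One small remark: the paper later restates essentially the same identity in a more directly usable form as part (ii) of Theorem~\ref{thm:tevelev}, namely $\mu(V) = (-1)^d(\chi(V) - \chi(X,\mathcal{L}))$ with $\chi(X,\mathcal{L})$ the Euler characteristic of a generic section. Your formula $\chi(X_0) = \chi(X_i) + (-1)^{d-1}\mu(X_i)$ is exactly this statement, so your argument in effect proves that theorem as well and then deduces the proposition by subtraction---which is the natural order of dependence.
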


\begin{example}
    \label{ex:Chi(cusp)}
    A smooth cubic plane curve has Euler characteristic zero. A cusp singularity has Milnor number two. Therefore, a cuspidal plane cubic has Euler characteristic two. The full stratification of plane cubics is shown in Figure \ref{fig:cubics}. Each stratum of ${\cal X}_F^c \rightarrow Z = \mathbb{P}^9 \simeq \mathbb{P}(\mathbb{C}[x_0,x_1,x_2]_3)$ corresponds to a circle in the figure. Strata with isolated singularities are labeled by their singularity type. Below each circle, we record the Euler characteristic of ${\cal X}_{F,z}^c$ on the stratum. The edges between strata indicate the poset relation. The Euler characteristic of a fiber ${\cal X}_{F,z}^c$ is invariant under the action of ${\rm GL}_{3}(\mathbb{C})$ on ternary cubics. The ring of invariants is generated by $I_4$ and $I_6$, which are homogeneous polynomials of degree 4, resp.~6, in the 10 coefficients $z_0, \ldots, z_{9}$. These generators are unique up to scaling, and $I_4$ is called the \emph{Aronhold invariant} \cite[Example 11.12]{michalek2021invitation}. The closed stratum of nodal cubics ($A_1$) is defined by the discriminant $I_4^3-I_6^2$, and the cuspidal cubics ($A_2$) are defined by $I_4 = I_6 = 0$. 
    \begin{figure}
        \centering
        \includegraphics[width=0.90\linewidth]{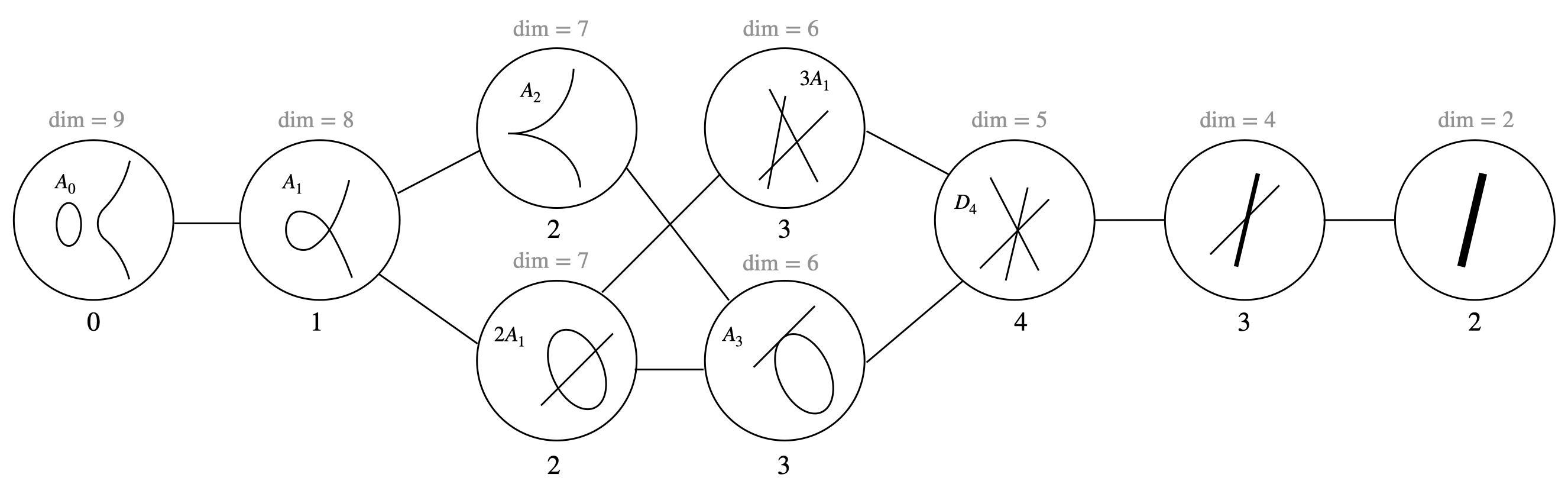}
        \caption{The Euler stratification of plane cubics.}
        \label{fig:cubics}
    \end{figure}
\end{example}

This shows that for families with only isolated singularities, any Euler stratum is a union of \emph{multisingularity strata}, i.e.,\ loci where the general members of the hypersurface family have a prescribed topological type of singularities. However, multisingularity strata are, in general, extremely hard to compute. Even for the case of plane curves, a complete description of the multisingularity strata are out of reach \cite{diaz1988geometry}. In \cite{esterov2017characteristic}, Esterov provides formulae for the tropicalization of the strata for two $A_1$ singularities and one $A_2$ singularity.

\section{Points on the line} \label{sec:3}

This section studies the case $d = 1$, corresponding to points on the projective line.
In this setting, it is possible to enumerate and parametrize all Euler strata. However, finding implicit equations is challenging.   
Let $n$ be a positive integer. Simplifying notation slightly,~we~set 
\begin{equation} \label{eq:XprojP1}
    \X^c = \left\{ (x,z) \in \P^1\times \P^n \midcolon z_0x_0^n + z_1x_0^{n-1}x_1 + \dots + z_nx_1^n = 0\right\}.
\end{equation}
We study the family given by the projection $\pi\colon\X^c\rightarrow Z$. The topological Euler characteristic of a fiber $\X^c_z = \pi^{-1}(z)$ is equal to the number of points of $\X^c_z$, ignoring multiplicities. Hence, Euler strata are loci where certain roots of $F = z_0x_0^n + \cdots + z_n x_1^n$ coincide. We label these strata by integer partitions of $n$ as follows. Let $\lambda = (\lambda_1,\dots,\lambda_k)$ be such a partition. For any $k > 0$, the set $Z_k = \left\{ z\in Z\midcolon \chi(\X^c_z) = k \right\}$ is a disjoint union of strata $Z_k = \coprod_{|\lambda|=k} S_{\lambda}$ ranging over all partitions of $n$ with length $k$. Here, $S_{\lambda}$ denotes the set
\begin{equation} \label{eq:Slambda}
    S_{\lambda} = \left\{z\in \P^n\colon {\X}^c_{z} \text{ has } k \text{ distinct points with multiplicities } \lambda_1,\dots,\lambda_k\right\}.
\end{equation}
The sets $S_{\lambda}$ are called \emph{coincident root loci}, \emph{multiple root loci} or \emph{Brill--Gordan loci} in the literature, see e.g.\ \cite{chipalkatti2003equations, feher2006coincident, kurmann2012some, lee2016duality, weyman1989equations}. In what follows we review the most important results. \par 

Let $\prec$ denote the partial order on the set of partitions given by refinement. That is, $\lambda = (\lambda_1,\dots,\lambda_k)$ \emph{refines} $ \mu =(\mu_1,\dots,\mu_l)$, written $\lambda \prec \mu$, if and only if there exists a partition $(I_1,\dots,I_l)$ of $[k]$ such that for any $1\leq i\leq l:~ \mu_i =\sum_{j\in I_i}\lambda_j$. An example of the resulting lattice for partitions of five can be seen in Figure \ref{fig:young_diagrams}. The partitions are labeled by Young diagrams, and by strings like $21^3$, representing $\lambda = (2,1,1,1)$. Each circle represents an irreducible Euler stratum of $\pi\colon \X^c \subset \P^1 \times \P^5 \rightarrow \P^5$. 
\begin{figure}
    \centering
    \includegraphics[width = 11cm]{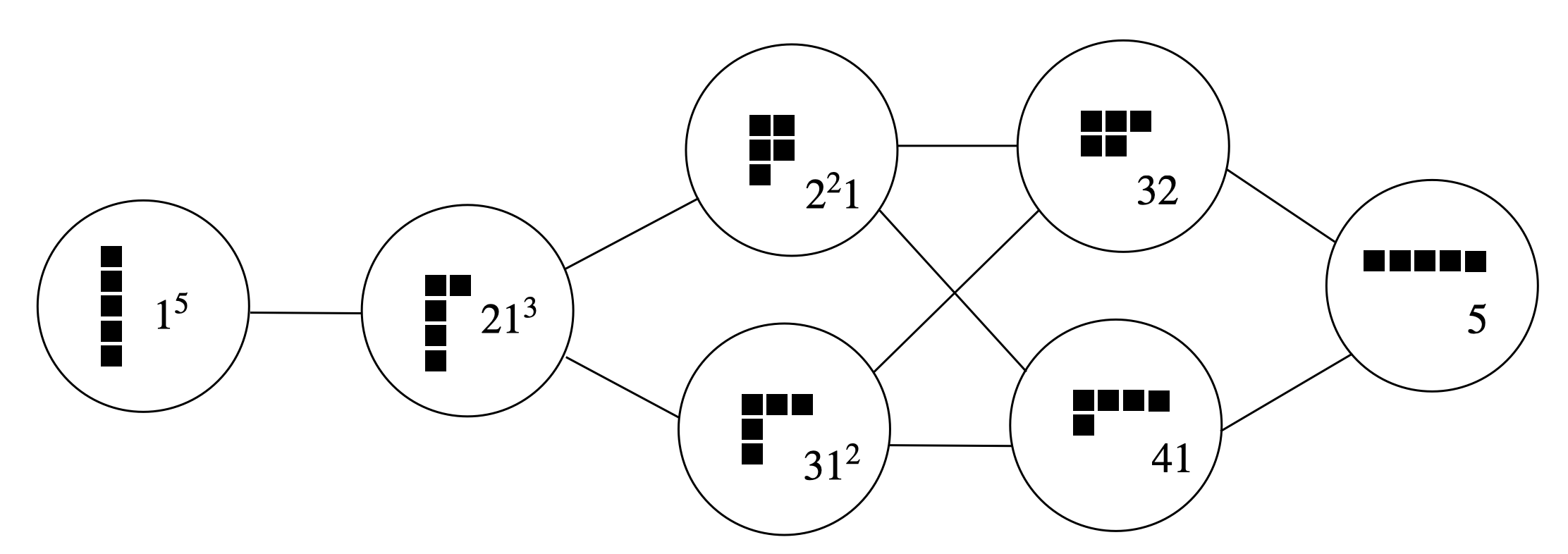}
    \caption{Young diagrams with five boxes index the strata of $\pi\colon \X^c \subset \P^1\times\P^5\rightarrow \P^5$. }
    \label{fig:young_diagrams}
\end{figure}
The following theorem is well known. 

\begin{theorem}
\label{thm:StratiRootsProj}
Let $\X^c$ be as in \eqref{eq:XprojP1} and consider the projection  
$\pi\colon \X^c\rightarrow Z=\P^n$. 
\begin{enumerate}
    \setlength\itemsep{0em}
    \item[(i)] The set of coincident root loci $\left\{ S_{\lambda} \right\}_{\lambda}$ ranging over all partitions $\lambda$ of $n$, ordered by refinement so that $S_{\lambda} \prec S_{\lambda'} \Leftrightarrow \lambda' \prec \lambda$, forms an Euler stratification of $\pi$.
    \item[(ii)] The Zariski closure $\nabla_{\lambda}$ of $S_{\lambda}$ is an irreducible projective variety. It equals the disjoint union $\nabla_{\lambda} = \coprod_{\lambda \prec \lambda^{\prime}} S_{\lambda^{\prime}}$, where $\prec$ denotes refinement of partitions.
    \item[(iii)] For $z\in S_{\lambda}$, the Euler characteristic of the fiber is $\chi(\X_z^c) = |\lambda|$. This equals the dimension $\dim S_{\lambda} = \dim \nabla_\lambda = |\lambda|$ and the height of the corresponding Young diagram.
\end{enumerate}
\end{theorem}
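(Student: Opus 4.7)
\emph{Proof approach.} The plan is to prove all three parts simultaneously via a single parametrization. For a partition $\lambda = (\lambda_1, \ldots, \lambda_k)$ of $n$, consider the morphism
\[
  \varphi_\lambda \colon (\P^1)^k \longrightarrow \P^n, \qquad (p_1, \ldots, p_k) \longmapsto \prod_{i=1}^{k} L_{p_i}^{\lambda_i},
\]
where $L_p$ denotes a linear form on $\C^2$ vanishing at $p$, and the product is interpreted as an element of $\P(\mathrm{Sym}^n(\C^2)^*) \cong \P^n$. Writing $U \subset (\P^1)^k$ for the open locus where the $p_i$ are pairwise distinct, one has $\varphi_\lambda(U) = S_\lambda$. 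Since $(\P^1)^k$ is proper and irreducible, the image $\varphi_\lambda((\P^1)^k)$ is closed and irreducible, and so must coincide with $\overline{S_\lambda} = \nabla_\lambda$. This already yields the irreducibility assertion in (ii).

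Next, I would describe the closure by analyzing which stratum an arbitrary $\varphi_\lambda(p)$ falls into. Given $p = (p_1, \ldots, p_k) \in (\P^1)^k$, the equivalence relation $i \sim j \Leftrightarrow p_i = p_j$ yields a set partition $\{I_1, \ldots, I_l\}$ of $[k]$, and $\varphi_\lambda(p) \in S_{\lambda'}$ with $\lambda'_j = \sum_{i \in I_j} \lambda_i$. By construction $\lambda \prec \lambda'$, and conversely every $\lambda'$ with $\lambda \prec \lambda'$ arises from some such $p$ by placing $l$ distinct points of $\P^1$ according to the witnessing set partition. Hence $\nabla_\lambda = \bigsqcup_{\lambda \prec \lambda'} S_{\lambda'}$, proving the rest of (ii).

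Part (i) then follows immediately: the $S_\lambda$ partition $\P^n$ because every nonzero binary form of degree $n$ has a unique factorization into pairwise distinct linear forms with prescribed multiplicities; axiom~(2) of Definition~\ref{def:ES} is precisely the closure description just established; and axiom~(3) holds because for $z \in S_\lambda$ the fiber $\X_z^c$ is simply the set of $k$ distinct roots of $F(x,z)$, so $\chi(\X_z^c) = k = |\lambda|$ (using the convention, fixed earlier in the section, that $|\lambda|$ denotes the length). This also proves the Euler characteristic assertion in (iii). For the dimension, note that $\varphi_\lambda$ has finite fibers over $U$ (of cardinality equal to the stabilizer in $S_k$ of the multiplicity vector), so $\dim \nabla_\lambda = \dim (\P^1)^k = k$, matching the height of the Young diagram of $\lambda$.

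\emph{Main obstacle.} The only delicate point is aligning the coincidence structure of the $p_i$ with refinement of partitions on the target: one must verify both that coalescing $p_i$'s produces a coarsening of $\lambda$ and that every coarsening $\lambda'$ is actually realized. Both are direct from the definitions once the parametrization $\varphi_\lambda$ is in place, so the theorem is essentially a reformulation of the classical unique factorization of binary forms.
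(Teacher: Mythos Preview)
Your argument is correct. The paper does not actually supply a proof of Theorem~\ref{thm:StratiRootsProj}; it simply declares the result ``well known'' and later records the very same parametrization $\varphi_\lambda$ (Equation~\eqref{eq:paramSlambda}) that you use, so your approach is exactly the one implicitly invoked by the authors.
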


Hilbert proved in \cite{hilbertDegreeUniv} that the degree of the $k$-dimensional projective variety $\nabla_{\lambda}$ is
\[
    \deg(\nabla_{\lambda}) = \frac{k!}{m_1!m_2!\dots m_n!}\cdot \lambda_1\lambda_2\dots \lambda_k,
\]
where $m_j := \#\{i\colon \lambda_i = j\}$. In general, not much is known about the structure of the ideal $I(\nabla_{\lambda})$ defining $\nabla_{\lambda}$. For the case where $\lambda$ is of the form $\lambda = (1^{n-a},a)$ for $a\geq \left\lfloor n/2\right\rfloor +2$, Weyman showed that $I(\nabla_{\lambda})$ is generated in degree $\leq 4$ \cite{weyman1989equations}. Moreover, for $|\lambda|=2$, Abdesse-lam and Chipalkatti proved that $I(\nabla_{\lambda})$ is generated in degree $\leq 4$ \cite[Proposition 20]{abdesselam2006bipartite}.\par 
Chipalkatti describes a generating set for $I(\nabla_{\lambda})$ in terms of covariant forms \cite{chipalkatti2004invariant}. We can deduce an upper bound on the number of generators of $I(\nabla_{\lambda})$ using \cite[Theorem 3.5 and Proposition 3.6]{chipalkatti2004invariant} as follows: Let $\mathcal{C}_{\lambda}$ be the set of all $n$-partitions $\mu \nprec \lambda$ that are minimal with this property, i.e.,\ $\mathcal{C}_{\lambda} = \{\mu \nprec \lambda \colon \nu \prec \mu \Rightarrow \nu \prec \lambda\}$. Then $I(\nabla_{\lambda})$ can be generated by $n!\cdot |\mathcal{C}_{\lambda}|$ many polynomials. For example, consider the partition $\lambda = (3,2)$. Then $\mathcal{C}_{\lambda} = \{(4,1)\}$ and hence $I(\nabla_{\lambda})$ can be generated by $\leq 5!$ polynomials. This bound is not optimal, as $I(\nabla_{\lambda})$ is minimally generated by 28 polynomials, see \cite[Table 1]{lee2016duality}.\par 

Computing defining equations for $\nabla_{\lambda}$ quickly becomes a challenging task. For example, $I(\nabla_{2 1^3})$ (appearing in Figure \ref{fig:young_diagrams}) is the zero locus of the discriminant of the binary quintic, which is a degree eight polynomial with 59 terms:
\[
    z_{1}^{2}z_{2}^{2}z_{3}^{2}z_{4}^{2}-4\,z_{0}z_{2}^{3}z_{3}^{2}z_{4}^{2}-4\,z_{1}^{3}z_{3}^{3}z_{4}^{2}+18\,z_{0}z_{1}z_{2}z_{3}^{3}z_{4}^{2}-27\,z_{0}^{2}z_{3}^{4}z_{4}^{2}-4\,z_{1}^{2}z_{2}^{3}z_{4}^{3}+ \cdots +3125\,z_{0}^{4}z_{5}^{4}.
\]
In \cite{lee2016duality}, the authors present a table with the degrees of the minimal generators for the ideals $I(\nabla_{\lambda})$ for $n \leq 7$. This can be done efficiently (and heuristically) by using finite field computations. In Section \ref{sec:61}, we extend these results to $n = 8$. Moreover, we compute all generators for $n = 1,\ldots,8$ over $\mathbb{Q}$.  
For this, we use the fact that $S_{\lambda}$ is parametrized~by
\begin{equation} \label{eq:paramSlambda}
    (\P^1)^k \dashrightarrow S_{\lambda},\quad ((a_1:b_1),\ldots, (a_k: b_k)) \, \longmapsto  \, (a_1x_0 + b_1x_1)^{\lambda_1}\cdots(a_kx_0 + b_kx_1)^{\lambda_k}.
\end{equation}

As indicated in the Introduction, applications in physics and statistics require studying Euler stratifications of families of very affine hypersurfaces in the algebraic torus. We modify the previous constructions by disregarding points at zero and infinity. That is, we consider
\begin{equation} \label{eq:XveryaffP1}
    (\X^*)^c = \{(t,z) \in \mathbb{C}^* \times \mathbb{P}^n \midcolon z_0 + z_1t + \cdots + z_n t^n = 0 \},
\end{equation}
with projection $\pi\colon  (\X^*)^c \rightarrow Z=\P^n$. Notice that, while previously all Euler strata were necessarily stable under the action of $\mathrm{GL}(2)$ on binary forms of degree $n$, this symmetry is now broken. This leads to a significant increase in the number of strata. \par 

Strata are now labeled by two nonnegative integers $m_0, m_\infty$ such that $m_0 + m_\infty \leq n$, and a partition of $n - m_0 - m_\infty$. For instance, the stratum corresponding to $m_0 = 0, m_\infty = 2$ and the partition $21$ of $3$ consists of binary quintics with a root of multiplicity two at infinity, and two roots in $\mathbb{C}^*$, one of which has multiplicity two and one of which is simple. In Figure \ref{fig:strataC1}, we label such a stratum for $n = 5$ by the corresponding partition, with the integers $m_0$ and $m_\infty$ written to the left, resp.~to the right of it. The above stratum is $0|21|2$. The poset from Figure \ref{fig:young_diagrams} appears on the right side of Figure \ref{fig:strataC1}, considering only strata with $m_0 = m_\infty = 0$. 
\begin{figure}[!ht]
    \centering
    \includegraphics[width = 16cm]{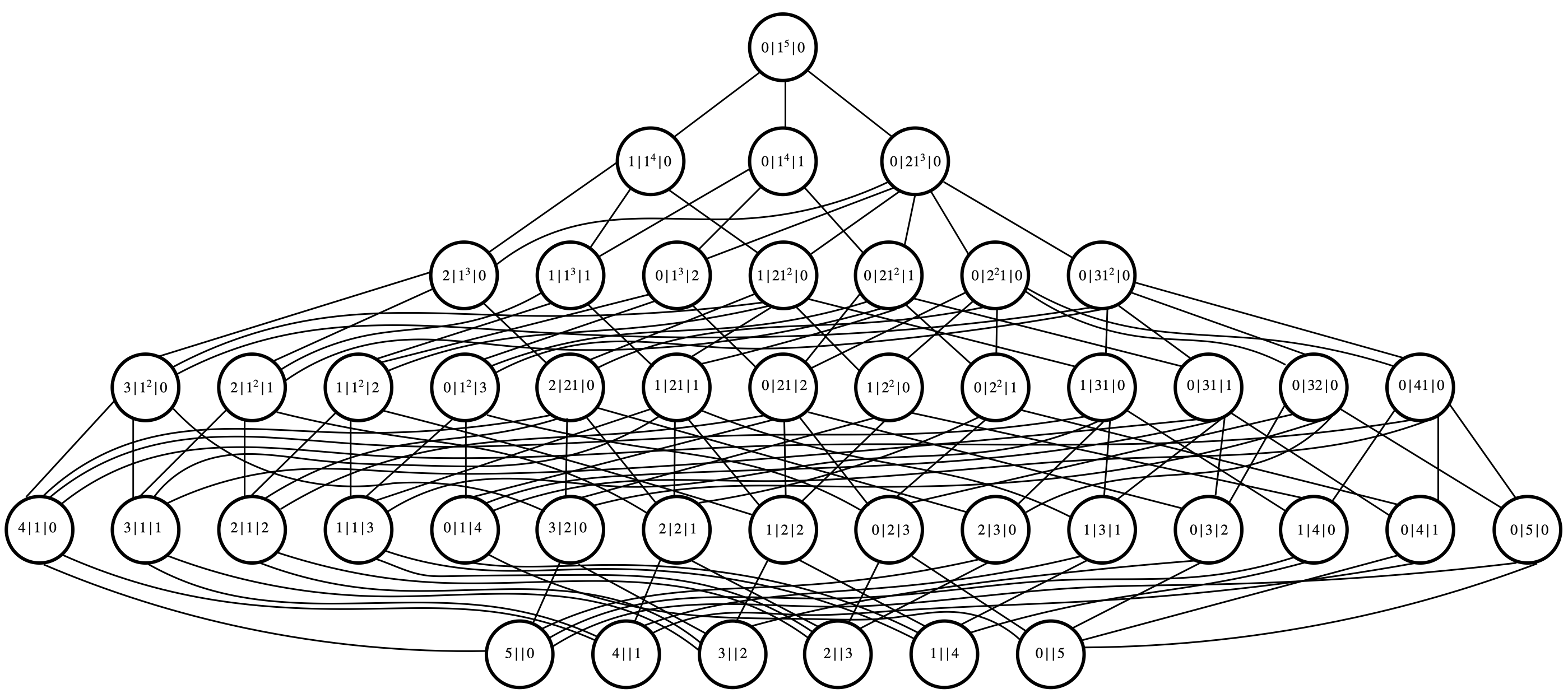}
    \caption{The Euler stratification of five points in $\mathbb{C}^*$.}
    \label{fig:strataC1}
\end{figure}

Much like the strata $S_\lambda$ in \eqref{eq:paramSlambda}, the stratum $S_{m_0|\lambda|m_\infty}$ is parametrized by 
\[
    (\P^1)^k \dashrightarrow S_{m_0|\lambda|m_\infty},\quad ((a_1:b_1),\ldots, (a_k: b_k)) \, \longmapsto  \, x_1^{m_0}(a_1x_0 + b_1x_1)^{\lambda_1}\cdots(a_kx_0 + b_kx_1)^{\lambda_k}x_0^{m_\infty}.
\]
Equations for its closure $\nabla_{m_0|\lambda|m_\infty}$ are thus easily deduced from equations for $\nabla_\lambda$. To define the partial ordering of the strata $S_{m_0|\lambda|m_\infty}$, let $(m_0, \lambda, m_\infty)$ be the unique partition of $n$ for which $S_{m_0|\lambda|m_\infty} \subseteq S_{(m_0,\lambda,m_\infty)}$. The following definition of ``$\prec$'' is natural: 
\begin{equation} \label{eq:order} m_0|\lambda|m_\infty \prec m_0' | \lambda' | m_\infty' \, \,  \Longleftrightarrow \, \, (m_0,\lambda,m_\infty) \prec (m_0', \lambda, m_\infty'), \, \, \, m_0 \leq m_0' \text{ and } \, m_\infty \leq m_\infty'. \end{equation}

\begin{theorem} \label{thm:strataXveryaffP1}
Let $(\X^*)^c$ be as in \eqref{eq:XveryaffP1} and consider the projection 
$\pi\colon (\X^*)^c\rightarrow Z=\P^n$. 
\begin{enumerate}
\setlength\itemsep{0em}
    \item[(i)] The set of strata $\left\{ S_{m_0|\lambda|m_\infty} \right\}$ ranging over all $m_0 + m_\infty \leq n$ and all partitions $\lambda$ of $n- m_0 - m_\infty$, ordered by \eqref{eq:order}, forms an Euler stratification of $\pi$.
    \item[(ii)] The Zariski closure $\nabla_{m_0|\lambda|m_\infty}$ of $S_{m_0|\lambda|m_\infty}$ is an irreducible projective variety. It equals the disjoint union $\nabla_{m_0|\lambda|m_\infty} = \coprod_{ m_0|\lambda|m_\infty \prec m_0'|\lambda'|m_\infty'} S_{m_0'|\lambda'|m_\infty' }$.
    \item[(iii)] For $z\in S_{m_0|\lambda|m_\infty}$, the Euler characteristic of the fiber is $\chi((\X^*)_z^c) = |\lambda|$. This equals the dimension $\dim S_{m_0|\lambda|m_\infty} = \dim \nabla_{m_0|\lambda|m_\infty} = |\lambda|$.
\end{enumerate}
\end{theorem}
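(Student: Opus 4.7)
The strategy is to treat this result as a torus-refinement of Theorem~\ref{thm:StratiRootsProj}: for each $z\in Z$, the binary form $F(x,z) = z_0x_0^n+z_1x_0^{n-1}x_1+\cdots+z_nx_1^n$ factors uniquely (up to scaling and ordering) as
\[ F(x,z) \, = \, c\cdot x_1^{m_0}\, x_0^{m_\infty} \prod_{i=1}^k (a_ix_0+b_ix_1)^{\lambda_i}, \]
with $(a_i:b_i)\in \P^1\setminus \{(1\!:\!0),(0\!:\!1)\}$ pairwise distinct. Here $m_0$, respectively $m_\infty$, is the multiplicity of the root $(1\!:\!0)$, respectively $(0\!:\!1)$, of $F(x,z)$, so the triple $(m_0,\lambda,m_\infty)$ is canonical and produces the set-theoretic disjoint decomposition in (i). Part (iii) follows immediately: the very affine fiber $(\X^*)^c_z$ is the set of torus roots of $F(x,z)$, of which there are exactly $k=|\lambda|$, hence $\chi((\X^*)^c_z)=|\lambda|$.

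For the irreducibility and dimension statements in (ii), I would invoke the parametrization of $S_{m_0|\lambda|m_\infty}$ displayed immediately before the theorem. Restricting to the open subset of $(\P^1)^k$ on which the $(a_i:b_i)$ are pairwise distinct and none equals $(1\!:\!0)$ or $(0\!:\!1)$, the map is surjective onto $S_{m_0|\lambda|m_\infty}$ and generically finite of degree $\prod_j m_j!$, where $m_j=\#\{i\colon \lambda_i=j\}$ accounts for permutations of the $(a_i:b_i)$ within blocks of equal $\lambda_i$. Hence $\nabla_{m_0|\lambda|m_\infty}=\overline{S_{m_0|\lambda|m_\infty}}$ is an irreducible projective variety of dimension $k=|\lambda|$, completing also the dimension claim in (iii).

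It remains to show $\overline{S_{m_0|\lambda|m_\infty}}=\bigsqcup_{m_0|\lambda|m_\infty\prec m_0'|\lambda'|m_\infty'} S_{m_0'|\lambda'|m_\infty'}$, which simultaneously gives the closure-union axiom in (i). For the containment ``$\subseteq$'', use $S_{m_0|\lambda|m_\infty}\subseteq S_{(m_0,\lambda,m_\infty)}$ and hence $\overline{S_{m_0|\lambda|m_\infty}}\subseteq \nabla_{(m_0,\lambda,m_\infty)}$; by Theorem~\ref{thm:StratiRootsProj}(ii) every limit point lies in some $S_\mu$ with $(m_0,\lambda,m_\infty)\prec\mu$, and writing the limit point's canonical triple as $(m_0',\lambda',m_\infty')$ forces $\mu=(m_0',\lambda',m_\infty')$. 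The inequalities $m_0\le m_0'$ and $m_\infty\le m_\infty'$ are preserved under specialization because ``multiplicity $\ge k$ at $(1\!:\!0)$'' is the closed condition that the first $k$ coefficients of $F$ vanish, and analogously at $(0\!:\!1)$. For the reverse inclusion, I build explicit one-parameter paths via the parametrization: using the order relation, partition the $k$ torus factors into blocks that respectively tend to $(1\!:\!0)$, tend to $(0\!:\!1)$, or merge within the torus so as to produce the parts of $(m_0',\lambda',m_\infty')$. Choosing $(a_i(t):b_i(t))$ pairwise distinct and generic in the torus for $t\ne 0$ and collapsing within each block as $t\to 0$ yields a curve in $\overline{S_{m_0|\lambda|m_\infty}}$ that specializes into a prescribed point of $S_{m_0'|\lambda'|m_\infty'}$.

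The main obstacle is the combinatorial bookkeeping in the reverse-inclusion step: one must produce, from the refinement-plus-monotonicity conditions \eqref{eq:order}, an assignment of the original parts in $(m_0,\lambda,m_\infty)$ to the slots of $(m_0',\lambda',m_\infty')$ in which $m_0$ is absorbed into the block summing to $m_0'$ and $m_\infty$ into the block summing to $m_\infty'$. This constraint reflects the geometric reality that the factors $x_1^{m_0}$ and $x_0^{m_\infty}$ of $F$ cannot leave their special roots under a limit; the remaining parts of $\lambda$ are then freely distributed among the $m_0'$-slot, the $m_\infty'$-slot, and merges within the torus, exactly as \eqref{eq:order} allows.
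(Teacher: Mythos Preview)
The paper offers no proof of this theorem; it is presented as a direct torus-refinement of the well-known Theorem~\ref{thm:StratiRootsProj}, with the parametrization of $S_{m_0|\lambda|m_\infty}$ and the order~\eqref{eq:order} stated just before the statement. Your outline follows exactly this implicit reasoning, and your arguments for part~(iii), for irreducibility, and for the dimension count are correct.

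There is, however, a genuine gap in your reverse-inclusion step for part~(ii), and it is precisely the ``main obstacle'' you flagged but did not resolve. Your final assertion---that the refinement-plus-monotonicity conditions of~\eqref{eq:order} always permit a block assignment sending the part $m_0$ into the block summing to $m_0'$ and $m_\infty$ into the block summing to $m_\infty'$---is \emph{false} as stated. Take $n=4$, $m_0|\lambda|m_\infty = 1|(2)|1$ and $m_0'|\lambda'|m_\infty' = 2|\,\emptyset\,|2$. The underlying partitions satisfy $(2,1,1)\prec(2,2)$, and $1\le 2$ holds on both ends, so~\eqref{eq:order} is satisfied. But $\overline{S_{1|(2)|1}}$ is the conic $\{z_0=z_4=0,\ z_2^2=4z_1z_3\}\subset\P^4$, which does \emph{not} contain the point $S_{2|\emptyset|2}=\{(0{:}0{:}1{:}0{:}0)\}$. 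Geometrically, the single torus factor of multiplicity~$2$ cannot be split between the two boundary points $0$ and~$\infty$; it must go entirely to one of them. Thus the closure order is genuinely finer than~\eqref{eq:order}: one needs the refinement $(m_0,\lambda,m_\infty)\prec(m_0',\lambda',m_\infty')$ to be realizable by a block decomposition in which $m_0$ sits in the $m_0'$-block and $m_\infty$ in the $m_\infty'$-block. Your instinct that this combinatorial bookkeeping is the crux was correct; the step simply does not go through under~\eqref{eq:order} alone, and the statement of~(ii) (or equivalently the definition of~$\prec$) requires this strengthening.
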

As a consequence of Theorem \ref{thm:strataXveryaffP1}, the number of strata is $\sum_{i = 0}^n (n +1 - i){\cal P}(i)$, where ${\cal P}(i)$ is the number of partitions of $i$. This is large compared to the ${\cal P}(n)$ strata for \eqref{eq:XprojP1}.

To end the section, we replace $\mathbb{P}^1$ by a smooth projective curve $X\subset \P^N$ and consider families of points arising as the intersection of $X$ with a hyperplane $H_z$. Here, the index $z$ stands for the coefficients of the defining linear equation of $H$: $z\in Z=(\P^N)^{\vee}$. Our goal is to compute the Euler stratification of $Z$, i.e., the loci in $Z$ where $X\cap H_z$ consists of a constant number of points. Note that if $X \subset \mathbb{P}^n$ is the rational normal curve of degree $n$, then we recover the Euler stratification of \eqref{eq:XprojP1}. Our algorithm is deduced from the following~result.

\begin{proposition}
    \label{prop:tevelevEuler}
    Let $X$ be a one-dimensional smooth projective variety such that the dual variety $X^{\vee}$ is a hypersurface. Let $H\in X^{\vee}$ be such that $(X\cap H)_{\text{\normalfont sing}}$ is finite. Then
    \[
        \chi(X\cap H) = \deg X - \mult_H X^{\vee}.
    \]
\end{proposition}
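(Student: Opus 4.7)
The plan is to apply Riemann--Hurwitz to a pencil of hyperplanes through $H$, and read off $\mult_H X^{\vee}$ by comparing with the generic case.

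Let $p_1, \ldots, p_k$ be the distinct points of $X \cap H$ with intersection multiplicities $m_i = (X \cdot H)_{p_i}$. Then $\deg X = \sum_i m_i$, and $\chi(X \cap H) = k$ because $X \cap H$ is a zero-dimensional scheme supported on $\{p_1, \ldots, p_k\}$. For any hyperplane $H'$, set
\[
d_{H'} \,:=\, \deg X - \chi(X \cap H') \,=\, \sum_{p \in X \cap H'}\bigl((X \cdot H')_p - 1\bigr).
\]
The claim is thus equivalent to $\mult_H X^{\vee} = d_H$.

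Fix a line $L \subset (\P^N)^{\vee}$ through $H$ that is otherwise generic. Since $X$ has dimension one, the codimension-two base locus $\bigcap_{s \in L} H_s \subset \P^N$ misses $X$ for generic $L$, so the pencil defines a finite morphism $\pi \colon X \to L \cong \P^1$ of degree $\deg X$, whose ramification index at $p \in X$ is $(X \cdot H_{\pi(p)})_p$. Riemann--Hurwitz therefore gives
\[
2 - 2g(X) \,=\, 2 \deg X \,-\, \sum_{s \in L} d_{H_s}.
\]
Only $s$ with $H_s \in X^{\vee}$ contribute to the sum. Since $\sing(X^{\vee})$ has codimension at least two in $(\P^N)^{\vee}$, the generic $L$ meets $X^{\vee}$ only at smooth points away from $H$; by reflexivity $(X^{\vee})^{\vee} = X$, each such smooth point corresponds to a hyperplane with exactly one simple tangency to $X$, contributing $d_{H_s} = 1$. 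Collecting the contribution $d_H$ at $H$ and using that $L$ meets $X^{\vee}$ with total multiplicity $\deg X^{\vee}$, of which $\mult_H X^{\vee}$ is concentrated at $H$, we obtain
\[
\sum_{s \in L} d_{H_s} \,=\, d_H \,+\, \bigl(\deg X^{\vee} - \mult_H X^{\vee}\bigr).
\]

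Applying the same identity to a generic pencil $L_0$ not passing through $H$ (where every intersection with $X^{\vee}$ is a transverse simple tangency) gives $\deg X^{\vee} = 2 \deg X + 2g(X) - 2$, which is the classical class formula. Substituting this back into the previous display and cancelling yields $\mult_H X^{\vee} = d_H = \deg X - \chi(X \cap H)$, as desired. The main subtlety to verify is the generic-transversality claim, in particular that a generic $L$ through $H$ meets $\sing(X^{\vee})$ only at $H$ and meets the smooth locus of $X^{\vee}$ transversely elsewhere; this follows by a standard Bertini argument together with the codimension bound on $\sing(X^{\vee})$ and the fact, from reflexivity, that the generic smooth point of $X^{\vee}$ corresponds to a hyperplane with a single simple tangency.
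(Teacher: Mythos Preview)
Your proof is correct and takes a genuinely different route from the paper. The paper simply invokes two black-box results of Tevelev (stated there as Theorem~3.4): part~(i), that $\mult_H X^{\vee}$ equals the total Milnor number $\sum_p \mu(X\cap H,p)$ of the singular fiber, and part~(ii), that this total Milnor number equals $(-1)^d(\chi(X\cap H)-\chi(X,\mathcal{O}_X(1)))$. For curves, $\mu(X\cap H,p)=(X\cdot H)_p-1$ and $\chi(X,\mathcal{O}_X(1))=\deg X$, so combining the two parts gives the formula immediately. Your argument instead reproves the curve case of Tevelev's part~(i) directly, via Riemann--Hurwitz on a generic pencil through $H$ and a comparison with a generic pencil avoiding $H$ (recovering the class formula $\deg X^{\vee}=2\deg X-\chi(X)$ along the way).

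The trade-off is clear: the paper's proof is a two-line citation and the cited theorems hold in arbitrary dimension, which is relevant for their later open problem about surfaces; your proof is self-contained and elementary, needing only Riemann--Hurwitz and basic reflexivity, but is specific to $d=1$. Two small remarks: you write $2-2g(X)$, implicitly assuming $X$ connected, but the argument goes through verbatim with $\chi(X)$ in place of $2-2g(X)$; and your claim that a smooth point of $X^{\vee}$ on $L$ corresponds to a single \emph{simple} tangency is not automatic from smoothness alone, but (as you note) the locus of hyperplanes with higher-order or multiple tangency has codimension at least two in $(\P^N)^{\vee}$, so a generic line through $H$ avoids it away from $H$.
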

Here, $\mult_H X^{\vee}$ denotes the multiplicity of $H \in X^{\vee}$. Proposition \ref{prop:tevelevEuler} is a consequence of Theorem \ref{thm:tevelev}, which summarizes \cite[Theorems 10.8 and 10.9]{tevelev2006projective}.

\begin{theorem} \label{thm:tevelev}
    Let $X\subset \P^N$ be a smooth $d$-dimensional projective variety such that $X^{\vee}$ is a hypersurface. Let $H\in X^{\vee}$, and let ${\cal L}$ be a line bundle on $X$.
    \begin{enumerate}
    \setlength\itemsep{-0.0cm}
        \item[(i)] If $(X\cap H)_{\text{\normalfont sing}}$ is finite, then the multiplicity of $X^{\vee}$ at $H$ is given by
    \[
        \mult_H X^{\vee} = \sum_{p\in (X\cap H)_{\text{\normalfont sing}}} \mu(X\cap H, p).
    \]
    \item[(ii)] For a global section $s\in H^0(X,\L)\setminus \{0\}$ of ${\cal L}$ with zero locus $V\subset X$, we have
    \[
        \mu(V) = (-1)^d(\chi(V) - \chi(X,\L)),
    \]
    where $\chi(X,\L)$ is the Euler characteristic of the zero locus of a generic section.
    \end{enumerate}
\end{theorem}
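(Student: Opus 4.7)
The plan is to handle the two parts separately, since they require different technology. Part (ii) is a deformation argument combined with Milnor's bouquet theorem, while part (i) rests on the conormal variety together with Teissier's theorem on the multiplicity of the discriminant in a miniversal deformation.

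For part (ii), I would deform $\V$ inside $|\L|$ to a smooth generic zero locus $\V'$, so that $\chi(\V')=\chi(X,\L)$ by definition. Connecting $\V$ and $\V'$ by a general pencil and applying Thom's first isotopy lemma to a Whitney stratification of the total space, the family restricts to a locally trivial fibration outside small open balls $B_{p_i}$ around the isolated singular points $p_i\in\V_{\sing}$. Hence $\V\setminus\bigsqcup_i\bar B_{p_i}$ and $\V'\setminus\bigsqcup_i\bar B_{p_i}$ are diffeomorphic, and additivity of the Euler characteristic gives
\[
  \chi(\V')-\chi(\V)\,=\,\sum_i\bigl(\chi(\V'\cap\bar B_{p_i})-\chi(\V\cap\bar B_{p_i})\bigr).
\]
Each $\V\cap\bar B_{p_i}$ is contractible, being a real cone on the link of $p_i$, while $\V'\cap\bar B_{p_i}$ is diffeomorphic to the Milnor fiber $F_{p_i}$ of $\V$ at $p_i$. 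Milnor's theorem identifies $F_{p_i}$ with a wedge of $\mu(\V,p_i)$ spheres of real dimension $d-1$, so $\chi(F_{p_i})-1=(-1)^{d-1}\mu(\V,p_i)$. Summing over $i$ yields $\chi(\V')-\chi(\V)=(-1)^{d-1}\mu(\V)$, which rearranges to the claimed identity.

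For part (i), I would work with the conormal variety $\operatorname{Con}(X)\subset X\times(\P^N)^{\vee}$ and its projection $\pi$ onto $X^{\vee}$. Since $\operatorname{Con}(X)$ has pure dimension $N-1$ and $X^{\vee}$ is a hypersurface, $\pi$ is generically finite; by biduality over $\C$ it is birational, with fiber $\pi^{-1}(H)=\{(p,H):p\in(X\cap H)_{\sing}\}$, which is finite by hypothesis. The multiplicity $\mult_H X^{\vee}$ is the local intersection number of $X^{\vee}$ with a generic line through $H$, and pulling back this line along $\pi$ decomposes it into a sum of local intersection multiplicities at the points $(p_i,H)$. Near each $(p_i,H)$, the germ of $\pi$ coincides with the discriminant map of the hyperplane-section family $\{X\cap H'\}_{H'\in(\P^N)^{\vee}}$; since translating $H$ in $(\P^N)^{\vee}$ realizes all linear perturbations of the local defining function, this family is sufficiently versal at $(X\cap H,p_i)$ to compute its discriminant multiplicity. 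Teissier's theorem on the multiplicity of the discriminant in a miniversal deformation of an isolated hypersurface singularity then identifies the local contribution at $(p_i,H)$ with $\mu(X\cap H,p_i)$, and summing over $i$ gives the formula.

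The main obstacle is precisely this miniversality statement in part (i): one must verify that, for every singular point $p_i$ of $X\cap H$, the deformation of the germ $(X\cap H,p_i)$ induced by varying the hyperplane in $(\P^N)^{\vee}$ surjects onto the local tangent space to the miniversal deformation, i.e.\ spans $M_{f_i,p_i}$ modulo higher-order terms. This is the technical heart of the argument and is where the hypothesis ``$X^{\vee}$ is a hypersurface'' really enters, since it forces enough of the normal directions to be transverse to the locus of singular sections. Part (ii), by contrast, is technically lighter; the only subtlety there is tracking the sign $(-1)^{d-1}$, which is dictated by Milnor's bouquet theorem.
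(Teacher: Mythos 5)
The paper does not actually prove this theorem: it is imported verbatim from Tevelev's book (Theorems 10.8 and 10.9), with part (ii) closely tied to the Parusi\'nski result the paper cites separately, so your proposal has to be measured against the standard proofs. Your part (ii) is essentially that standard proof and is fine: degenerate $V$ along a generic pencil to a generic member $V'$, use local topological triviality away from the isolated singular points, replace the cone $V\cap \bar B_{p_i}$ by the Milnor fibre, and use the bouquet of $\mu(V,p_i)$ spheres of dimension $d-1$; the only point you should make explicit is that you assume the generic zero locus of $\L$ is smooth (Bertini in the paper's application $\L=\mathcal{O}_X(1)$; without it one needs Parusi\'nski's generalized Milnor number, which is exactly what the paper's Proposition 2.9 quotes).

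Part (i) has a genuine gap at its ``technical heart''. The skeleton is right (conormal variety $\operatorname{Con}(X)$, reflexivity makes $\pi:\operatorname{Con}(X)\to X^{\vee}$ birational, the fibre over $H$ is $(X\cap H)_{\mathrm{sing}}$, and $\mult_H X^{\vee}$ localizes at the points $(p_i,H)$), but the identification of the local contribution with $\mu(X\cap H,p_i)$ via ``sufficient versality plus Teissier'' does not work. Varying the hyperplane only deforms the local equation $f_i$ by restrictions of the $N+1$ ambient linear forms, i.e.\ by affine-linear functions in local coordinates; these span at most an $(N+1)$-dimensional subspace of the Tjurina algebra, so the hyperplane-section family is \emph{not} versal as soon as $\tau(f_i)>N+1$ (already for hyperplane sections of smooth surfaces in $\P^3$ with moderately bad isolated singularities). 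Moreover the hypothesis that $X^{\vee}$ is a hypersurface has nothing to do with versality: it guarantees that $\mult_H X^{\vee}$ is a hypersurface multiplicity and that $\pi$ is generically finite birational, nothing more. For a non-versal family the classifying map to the miniversal base is not a submersion, and one cannot conclude that the induced discriminant meets a generic line with multiplicity $\mu$, so Teissier's theorem cannot be invoked this way. The correct finish is a direct local computation that needs no versality: a generic line $\{H_t\}$ through $H$ induces near $p_i$ the one-parameter deformation $f_i+t g_i$ with $g_i(p_i)\neq 0$; since $\operatorname{Con}(X)$ and $X\times\ell$ are smooth and meet in dimension zero, the local intersection number equals the length $\dim_{\C}\C\{x\}/\bigl(g_i\,\partial_j f_i-f_i\,\partial_j g_i\bigr)_j$, which is the Milnor number of $f_i/g_i$, hence equals $\mu(X\cap H,p_i)$ because the Milnor number of an isolated hypersurface singularity depends only on the reduced germ and not on the chosen equation. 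Replacing your versality step by this computation (or by a citation of the corresponding local lemma) closes the gap.
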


\begin{proof}[Proof of Proposition \ref{prop:tevelevEuler}]
    Choose $s\in \O_X(1)$ so that $X\cap H = s^{-1}(0) = V$. For $X$ one-dimensional, we have $\chi(X, \O_X(1)) = \deg X$ and, using Theorem \ref{thm:tevelev}(ii), we obtain
    \begin{equation}
        \label{equ:MilnorEulerDeg}
        \mu(X\cap H) = \sum_{p\in (X\cap H)_{\text{sing}}} \mu(X\cap H, p) = \deg X - \chi(X\cap H).
    \end{equation}
    Combining \eqref{equ:MilnorEulerDeg} with Theorem \ref{thm:tevelev}(i), we arrive at the desired statement.
\end{proof}

Proposition \ref{prop:tevelevEuler} gives rise to Algorithm \ref{alg:stratifyPointsCurve}, which we illustrate in the following example.

\begin{algorithm}[t] 
\small
  \caption{Euler stratification of $X\cap H_z$}\label{alg:stratifyPointsCurve}
  \textbf{Input:} homogeneous generators $f_1, \ldots, f_\ell \in \mathbb{Q}[x_0,\ldots,x_{N}]$ of the vanishing ideal of $X = V(f_1, \ldots, f_\ell)$ satisfying the assumptions of Proposition \ref{prop:tevelevEuler} and an integer $1\leq k\leq \deg X$\\ 
  \textbf{Output:} ideal $I\in \mathbb{Q}[z_0,\ldots,z_{N}]$ such that: $z\in V(I)$ generic $\Rightarrow \chi(X\cap H_z) = \deg X - k$
  \begin{algorithmic}[1]
    \State $\Delta_X \gets$ defining equation of $X^{\vee}$ in $\mathbb{Q}[X_0,\ldots ,X_{N}]$
    \State $\mathfrak{m} \gets$ maximal ideal generated by $\left\langle X_0 - z_0, \ldots, X_{N} - z_{N} \right\rangle \subset \mathbb{Q}(z_0,\ldots,z_{N})[X_0,\ldots ,X_{N}]$
    \State $Q \gets$ quotient ring $\mathbb{Q}(z_0,\ldots,z_{N})[X_0,\ldots ,X_{N}] / \mathfrak{m}^k$
    \State $M \gets$ companion matrix representing multiplication by $\Delta_X$ in $Q$
    \State \Return ideal generated by entries of $M$
  \end{algorithmic}
\end{algorithm}

\begin{example}
    Let $X = V(f) \subset \P^2$ be the elliptic curve defined by $f = x_0^3 - x_0x_2^2 - x_1^2x_2 +x_2^3$. The output of Algorithm \ref{alg:stratifyPointsCurve} for $k=1$ is the defining equation of the dual curve $X^{\vee}$:
    \[
        4z_0^6 + 4z_0^5z_2 + z_0^4z_1^2 + 36z_0^3z_1^2z_2 - 4z_0^3z_2^3 + 18z_0^2z_1^4 + 
        30z_0^2z_1^2z_2^2 + 24z_0z_1^4z_2 - 23z_1^6 + 54z_1^4z_2^2 - 27z_1^2z_2^4.
    \]
    This vanishes on $z \in (\mathbb{P}^2)^{\vee}$ for which $H_z = \{z_0 x_0 + z_1 x_1 + z_2 x_2 = 0\}$ intersects $X$ in at most two points. For $k=2$, the algorithm returns an ideal whose radical is generated by 
    \[ \begin{matrix}
         z_0^3 - 9z_0z_1^2 - 3z_0z_2^2 - 6z_1^2z_2, &  z_0^2z_2^2 - 50z_0z_1^2z_2 - 18z_0z_2^3 + 28z_1^4 - 63z_1^2z_2^2, \\
        7z_0^2z_1z_2 + 56z_0z_1^3 + 18z_0z_1z_2^2 + 45z_1^3z_2 - 9z_1z_2^3, 
        & 28z_0^2z_1^2 + 9z_0^2z_2^2 + 54z_0z_1^2z_2 + 6z_0z_2^3 + 21z_1^2z_2^2.
        \end{matrix}
    \]
    This defines nine points $z \in (\mathbb{P}^2)^{\vee}$ for which $H_z$ intersects $X$ in a single point.
\end{example}

\section{Projective hypersurfaces} \label{sec:4}

We switch to the hypersurface setting. Throughout the section, $F(x,z) \in \mathbb{C}[z][x]$ is a bihomogeneous polynomial in variables $x = (x_0, \ldots, x_d)$ and parameters $z = (z_0, \ldots, z_m)$. We develop an algorithm for computing the Euler stratification of the family
\begin{equation} \label{eq:Xproj2} 
\pi_F: {\cal X}_F \rightarrow Z, \quad \text{where} \quad {\cal X}_F \, = \, \{ (x,z) \in \mathbb{P}^d \times Z \, : \, F(x,z) \neq 0 \}.\end{equation}
We assume for simplicity that $Z$ is a closed irreducible subvariety of $\mathbb{P}^m$. The first task is to determine for which $z \in Z$ the Euler characteristic of $\X_{F,z} = \pi_F^{-1}(z)$ differs from the generic value. We make this precise by defining the \emph{Euler discriminant} of our family. We point out that this term was coined by Esterov in \cite[Section 1.2]{esterov2013discriminant} to mean something slightly different. 
\begin{definition} \label{def:eulerdisc}
    By Proposition \ref{prop:Xproj}, there exists an integer $\chi^*$ and a dense open Euler~stratum $S^* \subseteq Z$ such that $\chi(\X_{F,z}) = \chi^*$ for all $z \in S^*$. The \emph{Euler discriminant variety} of $\pi_F: {\cal X}_F \rightarrow Z$ is the Zariski closure of $Z \setminus S^*$ in $Z$. We denote this variety by $\nabla_\chi(\pi_F)$, or simply $\nabla_\chi$.
\end{definition}

\begin{algorithm}[H] 
\small
  \caption{Euler stratification of \eqref{eq:Xproj}}\label{alg:stratify_proj}
  \textbf{Input:} bihomogeneous polynomial $F\in \mathbb{Q}[x,z]$ and the defining ideal $I_Z$ of $Z$\\ 
  \textbf{Output:} prime ideals of closed strata in the Euler stratification of $\pi_F:{\cal X}_F \rightarrow Z$
  \begin{algorithmic}[1]
    \State $\text{strata}_0 \gets \{ I_Z \} $ 
    \State $i \gets 0$
    \While{$\text{strata}_i \neq \{ \mathbb{Q}[z] \} $} \Comment{iteration ends when computed strata are empty}
    \State $i \gets i + 1$ 
    \State $\text{strata}_i \gets \{ \} $\Comment{Initialize an empty list of strata}
        \For{$I \in \text{strata}_{i-1}$}
      \State $I(\nabla_\chi) \gets \text{Euler discriminant ideal of } {\cal X}_F \cap (\mathbb{P}^d \times V(I))$
      \State $\text{strata}_i \gets \text{strata}_i \cup \{\text{minimal primes of } I(\nabla_\chi)\}$
      \EndFor
    \EndWhile\label{euclidendwhile}
    \State \textbf{return} $\bigcup_{i} \text{strata}_i$
  \end{algorithmic}
\end{algorithm}

Computing an Euler stratification comes down to an iterated computation of Euler discriminants. This is clarified by Algorithm \ref{alg:stratify_proj}, which uses the notation $I_Z = I(Z)$ for the vanishing ideal of $Z \subseteq \mathbb{P}^m$. The algorithm computes the Euler discriminant for $z \in V(I_Z) = Z$ first. After that, it restricts the family to each of the irreducible components of $\nabla_\chi$ to find deeper strata, and so on. Notice that one can easily adapt the algorithm to keep track of the partial order relations between strata.  The important step in the algorithm is Line 8, the computation of $\nabla_\chi$. The rest of the section is devoted to that step. 

Our algorithm uses a formula from \cite{huh2012milnor} which expresses $\chi(D(F))$ in terms of the cohomology class of the graph of the Gauss map $\nabla F$. This is the closure $\Gamma = \overline{\Gamma^\circ}$ of
\begin{equation} 
    \label{eq:GammaJ} 
    \Gamma^\circ \, = \, \{ (x, y) \in D(F) \times (\mathbb{P}^d)^{\vee} \, : \, y = \nabla F(x) \}   
\end{equation}
in $\mathbb{P}^d \times (\mathbb{P}^d)^{\vee}$. In what follows, let $H_k, H_k^{\vee}$ be generic $k$-planes in $\mathbb{P}^d$ and $(\mathbb{P}^d)^{\vee}$ respectively, and let $[H_k \times H_\ell^{\vee}]$ be the generators of the Chow ring of $\mathbb{P}^d \times (\mathbb{P}^d)^{\vee}$. 
We state a version of Huh's result \cite[Theorem 9]{huh2012milnor} and outline a geometric proof.

\begin{theorem} \label{thm:Huh}
    Let $F \in \mathbb{C}[x_0, \ldots, x_d]$ be any non-constant homogeneous polynomial, and let $D(F) = \{ x \in \mathbb{P}^d \, : \, F(x) \neq 0 \}$ be its hypersurface complement. Suppose that
    \[ [\Gamma] \, = \, \sum_{i = 0}^d e_i \, [H_{d-i} \times H_{i}^{\vee}] \quad \in \, A_d(\mathbb{P}^d \times (\mathbb{P}^d)^{\vee})\]
    is the class of $\Gamma$ from \eqref{eq:GammaJ} in the Chow ring of $\mathbb{P}^d \times (\mathbb{P}^d)^{\vee}$. Setting $H_{-1} = \emptyset$, we have
    \[  \chi(D(F)) \, = \, \sum_{i=0}^d \chi((D(F) \cap H_i) \setminus (D(F)\cap H_{i-1})) \, = \, \sum_{i=0}^d (-1)^i \deg \nabla F_{|H_i} \, = \, \sum_{i=0}^{d}(-1)^i \, e_i . \]
\end{theorem}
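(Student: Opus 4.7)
The plan is to verify the three equalities in the statement from left to right. The leftmost equality follows from additivity of the topological Euler characteristic on algebraic stratifications, applied to the flag $\emptyset = H_{-1} \subset H_0 \subset \cdots \subset H_d = \mathbb{P}^d$, which decomposes $D(F)$ as $\bigsqcup_{i=0}^d \bigl((D(F) \cap H_i) \setminus (D(F) \cap H_{i-1})\bigr)$.

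For the middle equality, I would apply Theorem~\ref{thm:Dimca-Papadima} (Dimca--Papadima) to each piece of this stratification. For a generic flag, the intersection $D(F) \cap H_i$ is the hypersurface complement of $F_{|H_i}$ in $H_i \cong \mathbb{P}^i$, and $H_{i-1}$ is a generic hyperplane of $H_i$. The theorem then gives
\[
\deg \nabla(F_{|H_i}) \, = \, (-1)^i \chi\bigl((D(F) \cap H_i) \setminus (D(F) \cap H_{i-1})\bigr),
\]
which rearranges and sums to the middle identity, once we identify $\deg \nabla F_{|H_i}$ with $\deg \nabla(F_{|H_i})$ as below.

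For the rightmost equality, I would work in the Chow ring $A^*(\mathbb{P}^d \times (\mathbb{P}^d)^\vee) = \mathbb{Z}[h,h^\vee]/(h^{d+1},(h^\vee)^{d+1})$. Writing $[H_{d-i} \times H_i^\vee] = h^i (h^\vee)^{d-i}$, the coefficient $e_i$ is extracted by pairing $[\Gamma]$ with the complementary class $[H_i \times H_{d-i}^\vee]$; for generic $H_i$ and $H_{d-i}^\vee$ this yields $e_i = \#\{(x,y) \in \Gamma : x \in H_i,\, y \in H_{d-i}^\vee\}$. To match this count with $\deg \nabla(F_{|H_i})$, I would choose coordinates so that $H_i = \{x_{i+1} = \cdots = x_d = 0\}$; then $\nabla(F_{|H_i})$ factors as $\pi_i \circ \nabla F_{|H_i}$, where $\pi_i \colon (\mathbb{P}^d)^\vee \dashrightarrow (H_i)^\vee$ is the linear projection forgetting the last $d-i$ dual coordinates, with fibers $\mathbb{P}^{d-i}$ sharing a common $(d-i-1)$-dimensional center. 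Counting $x \in D(F) \cap H_i$ with $\nabla(F_{|H_i})(x) = y$ for generic $y$ therefore equals counting $x$ with $\nabla F(x) \in \pi_i^{-1}(y)$, which is the intersection number $e_i$.

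The main obstacle is this last identification. The additivity and Dimca--Papadima steps are essentially mechanical. The subtle point is that the $(d-i)$-planes arising as fibers of $\pi_i$ all pass through a common center, and hence are not truly generic $(d-i)$-planes in $(\mathbb{P}^d)^\vee$. One therefore needs a brief moving-lemma-type argument ensuring that $\Gamma$ is in general position with respect to this pencil for generic $H_i$, so that the intersection number computed using fibers of $\pi_i$ agrees with the value $e_i$ obtained from a truly generic $(d-i)$-plane.
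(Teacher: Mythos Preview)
Your proposal is correct and follows essentially the same route as the paper's proof: excision for the first equality, Dimca--Papadima applied to each slice for the second, and the chain-rule factorization $\nabla(F_{|H_i}) = \pi_i \circ (\nabla F)_{|H_i}$ for the third (the paper phrases the projection via an $(i{+}1)\times(d{+}1)$ matrix $\mathbb{L}_i$ whose rows span $H_i$, which is the coordinate-free version of your choice $H_i = \{x_{i+1}=\cdots=x_d=0\}$). The transversality issue you flag is glossed over in the paper as well; the resolution is that since $H_i$ is itself generic, the center $H_i^\perp \subset (\mathbb{P}^d)^\vee$ of $\pi_i$ is a generic $(d{-}i{-}1)$-plane, and a generic fiber through it is then a generic $(d{-}i)$-plane, so the intersection with $\Gamma$ is transverse without a further moving argument.
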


\begin{proof}
    The first equality follows from excision. The second equality is Theorem~\ref{thm:Dimca-Papadima}: 
    \[ \deg \nabla F_{|H_i} \, = \, (-1)^i \, \chi((D(F) \cap H_i) \setminus (D(F) \cap H_{i-1})).\]
    It remains to show that $\deg \nabla F_{|H_i}$ equals $e_i$. That is, $\deg \nabla F_{|H_i}$ equals the number of intersection points in $\Gamma \cap (H_i \times H_{d-i}^{\vee})$. The linear space $H_i$ can be represented as the projectivized row span of an $(i+1) \times (d+1)$ matrix $\mathbb{L}_i$. Let $\tilde{x}_0, \ldots, \tilde{x}_i$ be coordinates on $H_i$, so that $x= \mathbb{L}_i^t \cdot \tilde{x}$. We set $F_{|H_i} = F(\mathbb{L}_i^t \cdot \tilde{x})$, and the chain rule implies that 
    \[ \nabla F_{|H_i}(\tilde{x}) \, = \,  \left ( 
        \frac{\partial F_{|H_i}}{\partial \tilde{x}_0} : \cdots : \frac{\partial F_{|H_i}}{\partial \tilde{x}_i}
 \right ) \, = \, \mathbb{L}_i \left ( \restr{\left ( 
        \frac{\partial F}{\partial x_0} : \cdots : \frac{\partial F}{\partial x_d}
    \right )}{x = \mathbb{L}_i^t \cdot \tilde{x}} \right). \]
    Here, $\mathbb{L}_i(y)$ denotes the projection $\mathbb{L}_i: (\mathbb{P}^d)^{\vee} \dashrightarrow (\mathbb{P}^i)^{\vee}$ represented by the matrix $\mathbb{L}_i$.
    This establishes a bijection between the set $\{\tilde{x} \in D(F) \cap H_i  \, : \, \nabla F_{|H_i}(\tilde{x}) = \tilde{y}\}$~and 
    \[\{ (x,y) \in \Gamma \, : \, x \in H_i, \, \,  \mathbb{L}_i (y) = \tilde{y} \} \, = \, \Gamma \cap (H_i \times H_{d-i}^{\vee}) \]
    for generic $\tilde{y} \in (\mathbb{P}^i)^{\vee}$.  The condition $\mathbb{L}_i(y) = \tilde{y}$ determines a $(d-i)$-dimensional linear space $H_{d-i}^{\vee}$ in which $y$ is contained. The intersection $\Gamma \cap (H_i \times H_{d-i}^{\vee})$ is transverse, so that 
    \[ \deg \nabla F_{|H_i} \, = \, |\{\tilde{x} \in D(F) \cap H_i  \, : \, \nabla F_{|H_i}(\tilde{x}) = \tilde{y}\}| \, = \, | \Gamma \cap (H_i \times H_{d-i}^{\vee}) | \, = \, e_i. \qedhere  \]
\end{proof}

The numbers $e_i$ can be read from the coefficients of the (bi-graded) Hilbert polynomial of $\Gamma$, see \cite[Section 2]{huh2012milnor}. They can also be computed numerically, without computing the ideal of $\Gamma$, by intersecting $\Gamma^\circ$ from \eqref{eq:GammaJ} with products of random linear spaces $H_{i} \times H_{d-i}^{\vee}$. The number of intersection points is $e_i$. Alternatively, one computes the cardinality of a generic fiber of $\nabla F_{|H_i} : D(F) \cap H_i \rightarrow \mathbb{P}^i$. We have implemented these approaches in Julia. For this, we rely on the numerical homotopy continuation algorithms implemented in \cite{breiding2018homotopycontinuation}.

\begin{example} \label{ex:whitneycountercontd}
    We consider the cuspidal plane curves from Example \ref{ex:counterwhitney1}.
    For $z = (1:1)$, our implementation computes the following information, which is easily verifiable:
    \[ e_0 \, = \, \deg \nabla F_{|H_0} \, = \,  1, \quad e_1 \, = \,  \deg \nabla F_{|H_1} \, = \,  2, \quad e_2 \, = \,  \deg \nabla F_{|H_2} \, = \,  2. \qedhere \]
\end{example}

By Theorem \ref{thm:Huh}, the Euler discriminant $\nabla_\chi(\pi_F)$ is contained in the union of loci where the degrees of the restrictions of the Gauss map $\nabla F(x,z)_{|H_i}$ drop. Let $n^*$ be the degree of $\nabla F(x,z)$ for generic $z \in Z$. We define the \emph{polar discriminant} of a nonzero homogeneous polynomial $F \in \mathbb{C}[x_0, \ldots, x_d]$ as the Zariski closure of $\{ z \in Z \, : \, {\rm deg} \, \nabla F(x,z) < n^* \}\subset Z$. In what follows, we propose a randomized algorithm for computing the polar discriminant. 

Let $b \in (\mathbb{P}^d)^{\vee}$ be a generic point. The degree of $\nabla F$ is the number of points $x \in D(F)$ satisfying $\nabla F(x) = b$. Adding parameters $z \in Z$, we consider the incidence variety 
\begin{equation} \label{eq:Ybcirc} {\cal Y}^\circ_b \, = \, \left \{ (x,z) \in {\cal X}_F \, : \, {\rm rank} \begin{pmatrix}
    \frac{\partial F(x,z)}{\partial x_0}& \frac{\partial F(x,z)}{\partial x_1} & \cdots & \frac{\partial F(x,z)}{\partial x_d} \\ 
    b_0 & b_1 & \cdots & b_d
\end{pmatrix} \, \leq  \, 1  \right \}.\end{equation}
Here, ${\cal X}_F$ is as in \eqref{eq:Xproj2}. The closure of ${\cal Y}^\circ_b$ in $\mathbb{P}^d \times Z$ is denoted by ${\cal Y}_b$. By assumption, the projection $\pi_Z: {\cal Y}_b \rightarrow Z$ has generically finite fibers of cardinality $n^*$. The polar discriminant is contained in the image of ${\cal Y}_b \setminus {\cal Y}_b^\circ = {\cal Y}_b \cap V(F(x,z))$ under that projection. 

 \begin{proposition} \label{prop:strictcontainedproj}
     Let ${\cal X}_F$ be the family from \eqref{eq:Xproj2}, with $Z \subseteq \mathbb{P}^m$ any irreducible quasi-projective variety. Let ${\cal Y}_b^\circ$ be as in \eqref{eq:Ybcirc}, where $b \in \mathbb{P}^d$ is generic. The variety ${\cal Y}_b \cap V(F)$ has dimension at most $\dim Z-1$. Hence, $\pi_Z({\cal Y}_b \cap V(F)) \subsetneq Z$ is a strict containment.
 \end{proposition}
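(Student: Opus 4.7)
The plan is to exploit the definition ${\cal Y}_b = \overline{{\cal Y}_b^\circ}$, which places the density of ${\cal Y}_b^\circ \subset {\cal Y}_b$ at the center of the argument. Since $F(x,z) \neq 0$ on ${\cal Y}_b^\circ$ by construction, the strategy is to show that $F$ cannot vanish identically on any irreducible component of ${\cal Y}_b$, and then apply a standard dimension-drop argument to each component.

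First, I would record that for generic $b \in (\mathbb{P}^d)^\vee$ the hypothesis preceding the statement gives $\dim {\cal Y}_b^\circ = \dim Z$: the generic fiber of $\pi_Z \colon {\cal Y}_b^\circ \to Z$ has cardinality $n^* \geq 1$ (the case $n^* = 0$ renders ${\cal Y}_b^\circ$, and hence ${\cal Y}_b$, empty, making the claim trivial). Let $C$ be any irreducible component of ${\cal Y}_b$. If $C$ were disjoint from ${\cal Y}_b^\circ$, then ${\cal Y}_b^\circ$ would be contained in the union of the remaining components, and taking closures would contradict ${\cal Y}_b = \overline{{\cal Y}_b^\circ}$. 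Hence $C \cap {\cal Y}_b^\circ$ is a nonempty open subset of $C$, and since $C$ is irreducible, it is dense. This immediately gives two consequences: $\dim C = \dim(C \cap {\cal Y}_b^\circ) \leq \dim {\cal Y}_b^\circ = \dim Z$, and $F$ is not identically zero on $C$.

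Consequently $V(F) \cap C$ is a proper closed subvariety of the irreducible variety $C$, so $\dim(V(F) \cap C) \leq \dim C - 1 \leq \dim Z - 1$. Taking the union over the finitely many components of ${\cal Y}_b$ yields $\dim({\cal Y}_b \cap V(F)) \leq \dim Z - 1$, which is the first assertion. For the second, $\pi_Z({\cal Y}_b \cap V(F))$ is a constructible subset of $Z$ by Chevalley's theorem, and its closure has dimension at most $\dim({\cal Y}_b \cap V(F)) \leq \dim Z - 1$. Since $Z$ is irreducible of dimension $\dim Z$, this closure is a proper subvariety of $Z$, so $\pi_Z({\cal Y}_b \cap V(F)) \subsetneq Z$.

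The only mildly delicate point — and thus the "obstacle" — is the justification that every irreducible component of ${\cal Y}_b$ has dimension exactly $\dim Z$ and meets ${\cal Y}_b^\circ$; this is handled uniformly by the density argument in the second paragraph. Everything else is a routine application of the fact that a nonzero function on an irreducible variety cuts out a subvariety of strictly smaller dimension.
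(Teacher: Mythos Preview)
Your proof is correct but takes a somewhat different route from the paper's. The paper introduces the global incidence variety $\mathcal{Y}^\circ = \{(x,z,b) \in \mathcal{X}_F \times \mathbb{P}^d_b : \nabla F(x,z) = b\}$, observes that the obvious parametrization by $\mathcal{X}_F$ makes it irreducible of dimension $d + \dim Z$, and then invokes the fact that a general fiber of the dominant map $\mathcal{Y}^\circ \to \mathbb{P}^d_b$ is pure of dimension $\dim Z$; since that fiber is $\mathcal{Y}_b^\circ$, the boundary $\mathcal{Y}_b \setminus \mathcal{Y}_b^\circ = \mathcal{Y}_b \cap V(F)$ has dimension at most $\dim Z - 1$. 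You instead stay with a fixed generic $b$, use only the hypothesis (stated just before the proposition) that $\pi_Z\colon \mathcal{Y}_b \to Z$ is generically finite, and argue component-by-component via density of $\mathcal{Y}_b^\circ$ in its closure. Your argument is more elementary in that it avoids the global incidence and the ``general fiber is pure-dimensional'' step; the paper's approach, on the other hand, makes visible the irreducible total family over $\mathbb{P}^d_b$, a viewpoint that is reused verbatim for the very affine analogue in Proposition~\ref{prop:strictcontained}.
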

 \begin{proof}
     The following incidence variety is irreducible of dimension $d + \dim Z$:
     \begin{equation} \mathcal{Y}^\circ \, = \,  \{ (x,z, b) \in {\cal X}_F \times \mathbb{P}^d_b \, : \, \nabla F(x,z) = b  \}. \end{equation}
     This is seen from the obvious parametrization ${\cal X}_F \rightarrow {\cal Y}^\circ$. A general fiber of the dominant map $\mathcal{Y}^\circ \rightarrow \mathbb{P}^d_b$ is of pure dimension $\dim Z$. That fiber is ${\cal Y}_b^\circ$, so its boundary in the closure ${\cal Y}_b = \overline{{\cal Y}_b^\circ}$ has dimension at most $\dim {\cal Y}_b - 1 = \dim Z - 1$. 
 \end{proof}

The containment of the polar discriminant in $\pi_Z({\cal Y}_b \cap V(F))$ might be strict. For instance, there might be components which depend on the specific choice of $b$, see Example \ref{ex:polardiscriminant}. We can discard such spurious loci by repeating this procedure for several values of $b$ and intersecting the results. Here, $\dim Z + 1 \leq m + 1$ random choices for $b$ suffice. The discussion is summarized in Algorithm \ref{alg:polardisc}. Lines 3 and 4 can be executed using standard Gr\"obner basis techniques.

\begin{algorithm}[H] 
\small
  \caption{Polar discriminant of $\nabla F(x,z) $}\label{alg:polardisc}
  \textbf{Input:} bihomogeneous polynomial $F\in \mathbb{Q}[x,z]_d$ and the defining ideal $I_Z$ of $Z$\\ 
  \textbf{Output:} minimal primes of the ideal defining the polar discriminant of $F$
  \begin{algorithmic}[1]
  \For{$i = 0, \ldots, \dim Z$} 
    \State $b_i \gets $ a random point in $(\mathbb{P}_{\mathbb{Q}}^d)^{\vee}$
    \State compute the vanishing ideal of the closure ${\cal Y}_{b_i}$ of ${\cal Y}_{b_i}^\circ$ in \eqref{eq:Ybcirc}
    \State $I_i \gets$ the defining ideal of the projection $\pi_Z ({\cal Y}_{b_i} \cap V(F(x,z)))$
    \EndFor
    \State \Return minimal primes of $I_0 + I_1 + \cdots + I_{\dim Z }$
  \end{algorithmic}
\end{algorithm}

\begin{example} \label{ex:polardiscriminant}
The polar discriminant of the ternary quadric in \eqref{eq:Fconics} 
is \eqref{eq:conicdiscriminant}. It is computed efficiently with our code, see \eqref{eq:mathrepo}, choosing uniformly random integers $b_i$ between $-100$ and 100, a total number of $\dim Z + 1 = 6$ times. Each of the ideals $I_i$ in Algorithm \ref{alg:polardisc} decomposes as $\langle \Delta \rangle \cap \langle G(z,b) \rangle$, where $G(z,b)$ defines a hypersurface in $Z = \mathbb{P}^5$ which depends on $b$. 
\end{example}

\begin{algorithm}
\small
  \caption{Upper bound for the Euler discriminant of $\pi_F: {\cal X}_F \rightarrow Z $}\label{alg:eulerdiscproj}
  \textbf{Input:} bihomogeneous polynomial $F\in \mathbb{Q}[z][x]$ and the defining ideal $I_Z$ of $Z$\\ 
  \textbf{Output:} minimal prime ideals of a variety containing the Euler discriminant of $\pi_F: {\cal X}_F \rightarrow Z $ 
  \begin{algorithmic}[1]
  \For{$i = 0, \ldots, d-1$}
   \State $I_i \gets \langle 0 \rangle$
    \For{$j = 0, \ldots, \dim Z + 1$}
    \State $H_{i,j} \gets $ random $i$-plane in $\mathbb{P}^d$ 
    \State $I_i \gets I_i \, + $ ideal of the polar discriminant of $(\nabla F_{|H_{i,j}}, Z)$
    \EndFor 
    \EndFor
    \State $I_d \gets $ polar discriminant of $(\nabla F, Z)$
    \State \Return minimal primes of $I_0 \cap I_1 \cap \cdots \cap I_d$
  \end{algorithmic}
\end{algorithm}

The polar discriminant characterizes for which $z \in Z$ the degree of $\nabla F$ drops. By Theorem \ref{thm:Huh}, for computing the Euler discriminant, we must also check the degrees of the restrictions $\nabla F_{|H_i}$ for generic $i$-planes $H_i \subseteq \mathbb{P}^d$. The polar discriminant of $\nabla F_{|H_i}: D(F) \cap H_i \rightarrow \mathbb{P}^i$ may depend on $H_i$. Similarly as above, we can eliminate unwanted contributions by computing the polar discriminant repeatedly, for $m+1$ choices of $H_i$. This happens in lines 1--7 in Algorithm \ref{alg:eulerdiscproj}. The union of all these polar discriminants may contain the Euler discriminant strictly (see Example \ref{ex:counterwhitney3}). Nonetheless, by Proposition \ref{prop:strictcontainedproj}, using Algorithm \ref{alg:eulerdiscproj} in line 7 of Algorithm \ref{alg:stratify_proj} leads to a correct but possibly redundant Euler stratification.

\begin{example} \label{ex:counterwhitney3}
    We have seen in Example \ref{ex:counterwhitney1} that the Euler discriminant for $F(x,z) = z_0x_0^3+z_1x_1^2x_2$ is given by $z_0 = 0$. The fiber of $\nabla F$ over $(b_0:b_1:b_2)$ consists of the two points 
    \[ \left ( \sqrt{\frac{b_0}{3z_0}} : \sqrt{\frac{b_2}{z_1}} : \frac{b_1}{2z_1}\sqrt{\frac{z_1}{b_2}} \right ), \quad \left ( \sqrt{\frac{b_0}{3z_0}} : -\sqrt{\frac{b_2}{z_1}} : -\frac{b_1}{2z_1}\sqrt{\frac{z_1}{b_2}} \right ). \]
    When $(z_0:z_1) \rightarrow (0:1)$, these points approach $(1:0:0)$. The point $(1:0:0) \times (0:1)$ is indeed contained in ${\cal Y}_b \cap V(F(x,z))$. However, a similar argument shows that $(1:0)$ is also contained in the polar discriminant of $\nabla F$. However, $(1:0)$ does not belong to the Euler discriminant: the alternating sum $\sum_{i=0}^{2}(-1)^i e_i$ for $z = (1:0)$ equals one, as for generic $z$.
\end{example}

Example \ref{ex:counterwhitney3} raises the question to what extent the polar and Euler discriminant agree. For plane curve families with isolated singularities, it turns out that they coincide. 
\begin{proposition}
    \label{prop:PolarEqEuler}
    Let $F(x_0,x_1,x_2, z)$ be a squarefree bihomogeneous polynomial. If $Z \subseteq \mathbb{P}^m$ is such that the degree of the (reduced) plane curve ${\cal X}_{F,z}^c \subset \mathbb{P}^2$ is constant for all $z$, then the Euler discriminant of ${\cal X}_F \rightarrow Z$ coincides with the polar discriminant of $F$. 
\end{proposition}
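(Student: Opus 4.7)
The plan is to apply Theorem \ref{thm:Huh} in the case $d = 2$, which gives
\[ \chi(\X_{F,z}) \, = \, e_0(z) - e_1(z) + e_2(z), \]
and to show that, under the constant reduced-degree hypothesis, both $e_0(z)$ and $e_1(z)$ are independent of $z$. Since $e_2(z) = \deg \nabla F(x,z)$, this will imply that $\chi(\X_{F,z})$ differs from its generic value exactly when $e_2(z)$ does, identifying the Euler discriminant with the polar discriminant.

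The claim $e_0(z) = 1$ is immediate: for a generic point $H_0 \in \mathbb{P}^2$ the restriction $\nabla F(x,z)|_{H_0}$ is a map between singletons, so its degree is $1$ for every $z \in Z$.

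The main step is to verify $e_1(z) = n - 1$ for every $z$, where $n$ is the common degree of the reduced plane curves. For fixed $z$, I would choose a line $H_1 \subset \mathbb{P}^2$ that is generic with respect to $F(x,z)$. By Bertini applied to the reduced curve of degree $n$, the intersection $H_1 \cap V(F(x,z))$ consists of exactly $n$ distinct points, so the binary form $F(x,z)|_{H_1}$ has $n$ distinct roots on $H_1 \simeq \mathbb{P}^1$. Theorem \ref{thm:Dimca-Papadima} applied in dimension $1$ then yields
\[ e_1(z) \, = \, \deg \nabla F(x,z)|_{H_1} \, = \, -\chi\bigl(\mathbb{P}^1 \setminus (V(F(x,z)|_{H_1}) \cup \{\mathrm{pt}\})\bigr) \, = \, -(1 - n) \, = \, n - 1. \]

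With both reductions in place, $\chi(\X_{F,z}) = 2 - n + e_2(z)$, so the loci where $\chi(\X_{F,z})$ and where $\deg \nabla F(x,z)$ take non-generic values coincide; taking Zariski closures identifies $\nabla_\chi(\pi_F)$ with the polar discriminant. The hard part is verifying constancy of $e_1$ at \emph{every} point of $Z$, not just generically. This is precisely where the hypothesis enters: because the reduced curve has degree exactly $n$ for all $z$, a sufficiently generic (possibly $z$-dependent) line always meets it in $n$ distinct points, no matter how singular or non-reduced $F(x,z)$ itself becomes. A minor subtlety is that $F(x,z)|_{H_1}$ need not be squarefree, but Theorem \ref{thm:Dimca-Papadima} is stated for arbitrary non-constant homogeneous polynomials, so the formula still applies.
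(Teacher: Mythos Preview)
Your proof is correct and amounts to the same argument as the paper's: both establish $\chi(\X_{F,z}) = 2 - n + \deg \nabla F(x,z)$ from Dimca--Papadima, the paper via formula \eqref{eq:formulaDP} directly and you via the decomposition $e_0 - e_1 + e_2$ of Theorem \ref{thm:Huh}. The paper's one-line version simply records $\chi(\X_{F,z}) - \chi(\X_{F,z'}) = \deg \nabla F(x,z) - \deg \nabla F(x,z') - (\deg \X_{F,z}^c - \deg \X_{F,z'}^c)$, with the last term vanishing by hypothesis; your computation of $e_1(z)=n-1$ is exactly the observation that $\chi(\X_{F,z}^c \cap H) = n$ for a generic line $H$.
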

\begin{proof}
    It follows from Theorem \ref{thm:Dimca-Papadima} that for any $z, z' \in Z$, we have 
    \[ \chi({\cal X}_{F,z}) - \chi({\cal X}_{F,z'}) \, = \, \deg \nabla F(x,z) - \deg \nabla F(x,z') - ( \deg {\cal X}_{F,z}^c - \deg {\cal X}_{F,z'}^c). \qedhere\]
\end{proof}

\section{Very affine hypersurfaces} \label{sec:5}
Our original motivation for studying Euler stratifications comes from particle physics and statistics. In these applications, the families of interest are those from Equation \eqref{eq:Xveryaff}:
\begin{equation} \label{eq:Xveryaff2}
\mathcal{X}^*_f \, = \, \{ (x, z) \in T \times Z \, : \, f(x,z) \neq 0 \} \, = \, {\cal X}_F, \quad \text{where $F(x,z) = x_0 \cdots x_d \, f(x,z)$.}
 \end{equation} 
As in the Introduction, $T = D(x_0 \cdots x_d)$ is the dense torus of $\mathbb{P}^d$.
In physics, the signed Euler characteristic of ${\cal X}_{F,z}$ measures the number of \emph{master integrals}, which form a basis of a vector space generated by Feynman integrals \cite{agostini2022vector,bitoun2019feynman}. The dimension of that vector space depends on kinematic parameters, here captured by $z$. In algebraic statistics, very affine hypersurface complements arise as \emph{discrete exponential families}, also called \emph{toric models} \cite{amendola2019maximum}. The general result \cite[Theorem 20]{catanese2006maximum} implies that the maximum likelihood (ML) degree of such models equals the signed Euler characteristic of ${\cal X}_{F,z}$. In this context, $z$ are model~parameters. Euler stratifications aim to understand completely how the ML degree depends on $z$. 

We start by considering families in which the monomial support of $f$ is fixed, and all coefficients vary freely. More precisely, we fix a matrix $A$ with nonnegative integer entries of size $(d+1) \times (m+1)$ with no repeated columns, and with constant column sum $n>0$. In Example \ref{ex:conicsveryaff}, the parameters are $d = 2$, $m = 5$ and $n = 2$. The columns of $A$ are the integer vectors $a_0, \ldots, a_m \in \mathbb{N}^{d+1}$. They serve as exponents in the formula for $f(x,z)$: 
\begin{equation} \label{eq:fGKZ} f(x,z) \, = \, z_0 \, x^{a_0} \, + \, z_1 \, x^{a_1} \, + \, \cdots \, + \, z_m \, x^{a_m}. \end{equation}
The Euler discriminant from Definition \ref{def:eulerdisc} of the family ${\cal X}_F \rightarrow Z = \mathbb{P}^m$ is well-understood. We summarize this in Theorem \ref{thm:GKZesterov} below, which needs some more notation. The convex hull ${\rm Conv}(A)$ of the columns of $A$ is a polytope in $\mathbb{R}^{d+1}$ of dimension at most $d$. Its lattice volume is denoted by ${\rm vol}(A)$. The \emph{$A$-discriminant variety} $\nabla_A$ is the closure of all parameter values $z^*$ for which $V(f(x,z^*)) \cap T$ is singular. If $\nabla_A$ has codimension one, then its defining equation is denoted by $\Delta_A$ (this is defined up to a nonzero scalar multiple). If $\nabla_A$ has codimension at least two, we set $\Delta_A = 1$. The \emph{principal $A$-determinant} is defined as the following $A$-resultant:
    \[ E_A(z) \, = \, {\rm Res}_A \left ( f, \,  x_1 \, \frac{\partial f}{\partial x_1}, \, \ldots,\,  x_d \, \frac{\partial f}{\partial x_d} \right ) \quad \in  \mathbb{Q}[z_0, \ldots, z_m]. \]
For more background, see \cite[Chapter 10]{gelfand2008discriminants}. The polynomial $E_A(z)$ is known to factor into a product of discriminants. For each face $Q \preceq {\rm Conv}(A)$, let $A_Q$ be the submatrix of $A$ consisting of columns which lie on $Q$. By \cite[Chapter 10, Theorem 1.2]{gelfand2008discriminants}, we have 
\begin{equation} \label{eq:EAfactors} E_A(z) \, = \, \prod_{Q \preceq {\rm Conv}(A)} \Delta_{A_Q}^{e_Q}. \end{equation}
Here $e_Q \geq 1$ is the multiplicity of the projective toric variety of $A$ along its torus orbit corresponding to $Q$. The next theorem summarizes \cite[Theorem 13]{amendola2019maximum} and \cite[Theorem 2.36]{esterov2013discriminant}. 

\begin{theorem} \label{thm:GKZesterov}
    The signed Euler characteristic $(-1)^d \cdot \chi({\cal X}_{f,z}^*) = (-1)^d \cdot \chi({\cal X}_{F,z})$ with $f(x,z)$ as in \eqref{eq:fGKZ} is ${\rm vol}(A)$ if and only if $E_A(z) \neq 0$. Moreover, for a generic point $z$ on a codimension-one discriminant $\nabla_{A_Q}$, we have ${\rm vol}(A) - (-1)^d \cdot \chi({\cal X}_{F,z}) = e_Q$, with $e_Q$ as in \eqref{eq:EAfactors}. 
\end{theorem}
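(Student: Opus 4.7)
The strategy is to combine Huh's formula for the signed Euler characteristic of a very affine variety with the factorization \eqref{eq:EAfactors} of the principal $A$-determinant. For the first claim, Huh's theorem \cite[Theorem 1.7]{huh2014likelihood} implies that when the closure of $V(f) \cap T$ in some smooth projective toric compactification of $T$ meets every torus orbit transversally, $(-1)^d \chi(\mathcal{X}^*_{f,z})$ equals the number of critical points on $\mathcal{X}^*_{f,z}$ of a generic master function $\mathcal{L}_u = x^u \cdot f(x,z)^{-u_\infty}$. A Bernstein--Kouchnirenko-style count (or the toric-model ML-degree formula of \cite[Theorem~13]{amendola2019maximum}) identifies this number with ${\rm vol}(A)$. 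The transversality condition, applied in a refinement of $X_A$, is equivalent by \cite[Chapter~10]{gelfand2008discriminants} to $\Delta_{A_Q}(z) \neq 0$ for every face $Q \preceq {\rm Conv}(A)$; combined with \eqref{eq:EAfactors}, this is precisely $E_A(z) \neq 0$, yielding the first statement.

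For the second statement, fix a generic $z^* \in \nabla_{A_Q}$, so that $\Delta_{A_{Q'}}(z^*) \neq 0$ for every face $Q' \neq Q$. I would analyze a one-parameter family $z(t) \to z^*$ and track how the ${\rm vol}(A)$ critical points of $\mathcal{L}_u$ on $\mathcal{X}^*_{f,z(t)}$ redistribute as $t \to 0$. The nonvanishing of the other face discriminants along the path guarantees that the closure of $V(f(x,z(t)))$ meets every boundary torus orbit transversally except possibly $O_Q$, so critical points can only escape there. Exactly $(-1)^d \chi(\mathcal{X}^*_{f,z^*})$ critical points survive inside $T$, and ${\rm vol}(A) - (-1)^d \chi(\mathcal{X}^*_{f,z^*})$ of them escape to $O_Q$. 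The claim then becomes that this escape count equals the multiplicity $e_Q$.

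The principal obstacle is identifying this escape count with the toric intersection multiplicity $e_Q$ of $X_A$ along its orbit $O_Q$. This identification is encoded in the Newton polytope computation underlying \cite[Chapter~10, Theorem~1.2]{gelfand2008discriminants}, and independently in the tropical characteristic-class arguments of Esterov \cite[Theorem~2.36]{esterov2013discriminant}. Rather than reproving this multiplicity formula, I would invoke these references, which together supply the complete proof. A potential pitfall to verify carefully is that no critical points escape to lower-dimensional orbits $O_{Q''}$ with $Q'' \subsetneq Q$; this is ensured by the genericity of $z^*$ within $\nabla_{A_Q}$ and the assumption that the other face discriminants do not vanish at $z^*$.
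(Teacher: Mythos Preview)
The paper does not supply its own proof of this theorem; it simply records that the statement ``summarizes \cite[Theorem~13]{amendola2019maximum} and \cite[Theorem~2.36]{esterov2013discriminant}.'' Your proposal ultimately invokes exactly these two references, so in that sense you are aligned with the paper: the first assertion is precisely the content of \cite[Theorem~13]{amendola2019maximum}, and the second is extracted from Esterov's work. The additional narrative you provide---Huh's critical-point interpretation, the transversality criterion from GKZ, and the one-parameter degeneration picture---is a reasonable heuristic unpacking of why those cited results hold, but it is not itself a self-contained argument, as you acknowledge when you write ``rather than reproving this multiplicity formula, I would invoke these references.''

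One small caution: your sketch of the ``if'' direction of the first claim is clean, but the ``only if'' direction (that $E_A(z)=0$ forces a strict drop) is not fully addressed by the transversality equivalence alone, since loss of transversality does not a priori force the Euler characteristic to change. That implication is genuinely part of what \cite[Theorem~13]{amendola2019maximum} establishes (via semicontinuity and the second assertion about $e_Q\ge 1$), so your citation covers it, but the text of your outline glosses over this point.
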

\begin{example}
    The principal $A$-determinant $E_A$ from Example \ref{ex:conicsveryaff} has seven factors, one for each face of the triangle ${\rm Conv}(A)$. The toric variety of $A$ is the $2$-uple embedding of $\mathbb{P}^2$. Its multiplicity along each torus orbit is one. All exponents in \eqref{eq:EAfactors} are one, and the Euler characteristic differs from its generic value by one on each component of $\{E_A(z) = 0 \}$.
\end{example}

Before switching back to the case where $Z$ is any irreducible subvariety of $\mathbb{P}^m$, we discuss useful formulae for $\chi({\cal X}_{F,z^*}) = \chi(D(F(x,z^*)))$ when $z = z^*$ is fixed and arbitrary. In \cite[Theorem 1]{huh2013maximum}, Huh expresses the signed Euler characteristic of $D(F)$ as the number of critical points of a \emph{log-likelihood function}. We phrase this theorem in our notation. 

\begin{theorem} \label{thm:huh2}
    Let $F(x) = x_0 \cdots x_d \,  f(x) \neq 0$. There is a dense open subset $U \subseteq \mathbb{P}^d$ such that for $(\nu_0: \nu_1: \ldots: \nu_d) \in U$, the following equations have precisely $(-1)^d \cdot \chi(D(F))$ non-degenerate isolated solutions in $D(F) = T \setminus V(f)$: 
        \begin{equation} \label{eq:likelihood} \frac{\nu_1}{x_1} + \nu_0 \, \frac{\partial_1 f}{f} \, = \, \cdots \, = \, \frac{\nu_d}{x_d} + \nu_0 \, \frac{\partial_d f}{f} \, = \, 0.\end{equation} 
\end{theorem}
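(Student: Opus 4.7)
The plan is to recognize the equations in \eqref{eq:likelihood} as the critical point equations of a natural \emph{log-likelihood} function on $D(F)$, and then to invoke a general theorem of Gabber--Loeser (also in the form used by Huh and by Franecki--Kapranov) that counts critical points of a generic logarithmic one-form on a smooth closed subvariety of an algebraic torus.

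First I would dehomogenize by setting $x_0 = 1$, so that $D(F)$ is realized as the very affine variety $T_0 \setminus V(f)$, where $T_0 \simeq (\mathbb{C}^*)^d$ is an affine chart of $T$. Consider the multivalued holomorphic function
\[ \ell_\nu(x) \, = \, \nu_0 \log f(x) \, + \, \sum_{i=1}^{d} \nu_i \log x_i. \]
Its differential $d\ell_\nu$ is a single-valued algebraic one-form on $D(F)$, and a direct chain-rule computation shows that the equation $d\ell_\nu = 0$ is equivalent to the system \eqref{eq:likelihood}. So the claim reduces to counting the zeros of $d\ell_\nu$ on $D(F)$ for generic $\nu \in \mathbb{P}^d$.

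Next I would realize $D(F)$ as a closed subvariety of a larger torus via the embedding
\[ \iota \colon D(F) \hookrightarrow (\mathbb{C}^*)^{d+1}, \qquad (x_1, \ldots, x_d) \longmapsto \bigl(x_1, \ldots, x_d, f(x)\bigr), \]
which is well defined and closed precisely because $x_1, \ldots, x_d$ and $f$ are all invertible on $D(F)$. Under $\iota$, the one-form $d\ell_\nu$ is the pullback of the standard logarithmic one-form $\sum_{j=0}^{d} \nu_j \, dy_j/y_j$ on $(\mathbb{C}^*)^{d+1}$, where $y_0 = f$ and $y_i = x_i$ for $i \geq 1$. Hence $d\ell_\nu$ is the restriction to the smooth closed subvariety $\iota(D(F))$ of a generic (as $\nu$ varies) logarithmic one-form on the ambient torus.

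Now I would apply the key input: for a smooth closed $d$-dimensional subvariety $V$ of an algebraic torus, a generic logarithmic one-form restricted to $V$ has only non-degenerate isolated zeros, and the signed count of these zeros equals $(-1)^d \cdot \chi(V)$. This is proved via the characteristic cycle formalism and the index theorem for constructible sheaves on tori, and it applies verbatim here. Applying it to $V = \iota(D(F))$ yields that the zero set of $d\ell_\nu$ on $D(F)$ consists of exactly $(-1)^d \cdot \chi(D(F))$ non-degenerate isolated points, proving the theorem.

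The main obstacle is the genericity statement underlying the last step: one must show that there is a Zariski-open $U \subseteq \mathbb{P}^d$ of parameters $\nu$ for which no zeros of $d\ell_\nu$ escape to infinity or to the boundary divisor $V(x_0 \cdots x_d f)$, and for which all zeros are non-degenerate. This is exactly where the tameness/properness input of the Gabber--Loeser theorem is needed, and it is what forces the signed critical-point count to coincide with the topological invariant $(-1)^d \chi(D(F))$ rather than only bounding it from above.
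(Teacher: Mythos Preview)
The paper does not give its own proof of this theorem: it is quoted verbatim (in adapted notation) from Huh's paper \cite[Theorem 1]{huh2013maximum}, with no argument supplied. So there is nothing in the paper to compare your proposal against.

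That said, your sketch is correct and is essentially the strategy of Huh's original proof: embed $D(F)$ as a smooth closed subvariety of a torus via $x \mapsto (x_1,\ldots,x_d,f(x))$, recognize \eqref{eq:likelihood} as the vanishing of the pullback of a generic logarithmic one-form, and invoke the Franecki--Kapranov / Gabber--Loeser index theorem to identify the signed critical-point count with $(-1)^d\chi(D(F))$. Your identification of the ``main obstacle'' (genericity, non-degeneracy, no escape to the boundary) is also accurate; in Huh's argument this is absorbed into the statement that the characteristic cycle of the constant sheaf on a smooth very affine variety is $(-1)^d$ times the conormal variety, together with a Bertini--Sard type argument for generic $\nu$.
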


In Theorem \ref{thm:huh2}, a \emph{non-degenerate isolated solution} is a point $x \in D(F)$ satisfying \eqref{eq:likelihood} at which the Jacobian determinant of these $d$ equations does not vanish. The notation $\partial_i f$ is short for $\frac{\partial f}{\partial x_i}$. Using coordinates $t_i = \frac{x_i}{x_0}$ on $T$, one writes the equations \eqref{eq:likelihood} as \begin{equation} \label{eq:dlog0} {\rm dlog} \, f (1, t_1, \ldots, t_d )^{\nu_0} \, t_1^{\nu_1} \cdots t_d^{\nu_d}  \, = \, 0 . \end{equation}
Theorem \ref{thm:huh2} has important practical implications: it turns out that computing Euler characteristics of very affine hypersurfaces via critical points is efficient and reliable in practice, see e.g.~\cite{agostini2023likelihood,agostini2022vector,fourlectures}. Below, we will use Theorem \ref{thm:huh2} to compute Euler discriminants. 

Theorem \ref{thm:Dimca-Papadima} expresses the degree of $\nabla F$ as a sum of Euler characteristics: 
\begin{equation} \label{eq:decompDP}
    \deg \nabla F \, = \, (-1)^d \cdot \chi(D(F)) \, + \,  (-1)^{d-1} \cdot \chi(D(F) \cap H).
\end{equation}
We explain how this is a decomposition of critical points according to a \emph{likelihood degeneration}, in the sense of \cite{agostini2023likelihood}. First, we show that $\deg \nabla F$ is indeed a critical point count. 

\begin{proposition} \label{prop:ncrit}
    Let $0 \neq f \in \mathbb{C}[x_0, \ldots, x_d]_n$ be a form of degree $n$ and let $L = b_0 x_0 + \cdots + b_d x_d \in \mathbb{C}[x_0, \ldots, x_d]_1$ be a generic linear form. The degree of the Gauss map $\nabla F$ with $F = x_0 x_1 \cdots x_d \, f$ equals the number of solutions in $D(F\cdot L)$ to the equations 
    \begin{equation} \label{eq:critnabla} \frac{1}{x_i} + \frac{\partial_i f}{f} - (d+ n + 1) \, \frac{b_i}{L} \, = \, 0, \quad i = 1, \ldots, d.\end{equation}
\end{proposition}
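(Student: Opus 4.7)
The plan is to unwind $\deg \nabla F$ as the generic fiber cardinality of the Gauss map, and then reduce the resulting fiber equations using Euler's identity. For generic fixed $b \in (\mathbb{P}^d)^\vee$, the degree equals the number of $x \in D(F)$ for which there is a scalar $\lambda$ with $\partial_i F(x) = \lambda\, b_i$ for $i = 0, 1, \ldots, d$. Since $F = x_0 \cdots x_d \cdot f$, the product rule gives $\partial_i F = F \cdot \bigl(1/x_i + \partial_i f / f\bigr)$ on $D(F)$, so setting $\mu = \lambda/F$ the fiber equations become
\[
\frac{1}{x_i} + \frac{\partial_i f}{f} \;=\; \mu\, b_i, \qquad i = 0, 1, \ldots, d.
\]

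Next, I would multiply the $i$th equation by $x_i$ and sum over $i = 0, \ldots, d$. The left-hand side becomes $(d+1) + n = d+n+1$ by Euler's relation applied to $f$, while the right-hand side is $\mu \cdot L(x)$. This forces $\mu = (d+n+1)/L(x)$, and in particular $L(x) \neq 0$ at every preimage, so all preimages of $b$ automatically lie in $D(F \cdot L)$. Substituting back recovers the $d+1$ equations from the proposition. The same Euler computation — applied now only to the $d$ equations for $i = 1, \ldots, d$ and invoking Euler's identity for $f$ — shows that the $i = 0$ equation is automatically satisfied. Thus, working in the affine chart $x_0 = 1$ (which covers all of $D(F)$ since $x_0 \mid F$), the $d$ equations in Proposition \ref{prop:ncrit} cut out exactly the preimages of $b$ in $D(F)$.

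It remains to verify that the count is exact, not merely a set-theoretic bijection. For generic $b$, the Gauss map has $\deg \nabla F$ reduced preimages; the algebraic manipulations used above (dividing by $F \neq 0$, eliminating $\mu$ via $L \neq 0$, and suppressing the redundant $i = 0$ equation) are invertible with nonvanishing Jacobian on $D(FL) \cap \{x_0 = 1\}$, so these manipulations preserve the count and produce exactly $\deg \nabla F$ isolated solutions in $D(F \cdot L)$. The main subtle point is precisely this transfer of genericity — ensuring no solutions are absorbed into $V(L)$ or lost through the Euler reduction — and the Euler identity argument above is what secures this transfer cleanly.
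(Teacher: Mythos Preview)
Your proof is correct and follows essentially the same approach as the paper's: both rewrite the fiber condition $\nabla F(x) = b$ via the product rule as $1/x_i + \partial_i f/f = \mu\, b_i$, then use Euler's identity to identify $\mu = (d+n+1)/L(x)$ and conclude $L(x)\neq 0$. The paper packages the Euler step as right-multiplication of a rank-one matrix condition by the matrix with first column $(x_0,\ldots,x_d)^t$, whereas you carry it out explicitly by summing $x_i$-weighted equations and then observe the $i=0$ equation is redundant, but the content is identical.
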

\begin{proof}
    By definition, the degree of $\nabla F$ is the number of points $x \in D(F)$ satisfying 
    \[ {\rm rank} \begin{pmatrix}
        \frac{\partial F}{\partial x_0} & \frac{\partial F}{\partial x_1} & \cdots & \frac{\partial F}{\partial x_d} \\
        b_0 & b_1 & \cdots & b_d
    \end{pmatrix} \, = \, {\rm rank} \begin{pmatrix}
        \frac{1}{x_0} + \frac{\partial_0 f}{f} & \frac{1}{x_1} + \frac{\partial_1 f}{f}  & \cdots & \frac{1}{x_d} + \frac{\partial_d f}{f}  \\
        b_0 & b_1 & \cdots & b_d
    \end{pmatrix} \, = \, 1,\]
    for a generic point $b \in \mathbb{P}^d$. Here we divided the first row of the matrix by $F = x_0x_1 \cdots x_d \, f$, which does not change the rank. Multiplying from the right with the matrix obtained by replacing the first column of the identity matrix with $(x_0, \ldots, x_d)$, we obtain the equations 
    \[ {\rm rank} \begin{pmatrix}
        d + n + 1 & \frac{1}{x_1} + \frac{\partial_1 f}{f}  & \cdots & \frac{1}{x_d} + \frac{\partial_d f}{f}  \\
        b_0 x_ 0 + \cdots + b_d x_d & b_1 & \cdots & b_d
    \end{pmatrix} \, = \, 1. \]
    Note that this last step is allowed because $F$ has the special form $F = x_0x_1 \cdots x_d \, f$. 
    Since $b$ is generic, we find that $L(x) = b_0 x_0 + \cdots + b_d x_d \neq 0$, and we obtain \eqref{eq:critnabla}.
\end{proof}
In $t$-coordinates, the equations \eqref{eq:critnabla} are the critical point equations given by 
\begin{equation} \label{eq:dlog1} {\rm dlog} \,  f(1, t_1, \ldots, t_d) \, t_1 \cdots t_d  \, L(1,t_1,\ldots,t_d)^{-(d+n+1)} \, = \, 0. \end{equation}
The decomposition \eqref{eq:decompDP} is seen from Theorem \ref{thm:huh2} and Proposition \ref{prop:ncrit} by introducing a parameter $\varepsilon$ in the exponents, and then taking the limit $\varepsilon \rightarrow 0$. Explicitly, we set 
\[ {\cal L}(\varepsilon) \, = \, f(1,t_1,\ldots,t_d)^{(1-\varepsilon)\nu_0 + \varepsilon}t_1^{\nu_1(1-\varepsilon)+\varepsilon} \cdots t_d^{\nu_d(1-\varepsilon)+\varepsilon}  L(1,t_1,\ldots,t_d)^{-\varepsilon(d+n+1)}. \]
The solutions to ${\rm dlog} \, {\cal L}(\varepsilon) = 0$ are tuples of Puiseux series in $\varepsilon$. For $\varepsilon = 1$, they evaluate to the ($\deg \nabla F$)-many solutions to \eqref{eq:dlog1}. The equations are explicitly given by 
\begin{equation} \label{eq:epsiloneq} \frac{\nu_i (1-\varepsilon) + \varepsilon}{t_i} + ((1-\varepsilon)\nu_0 + \varepsilon) \, \frac{\partial_{t_i}\tilde{f}}{\tilde{f}} - \varepsilon (d+ n+ 1) \frac{b_i}{\tilde{L}} \, = \, 0, \quad i = 1, \ldots, d,  \end{equation}
where $\tilde{f}= f(1,t)$ and $ \tilde{L} = L(1,t)$. 
When $\varepsilon \rightarrow 0$, the solutions split up into two groups. The first group of solutions is of the form $t(\varepsilon) = t^{(0)}  + $ higher order terms in $\varepsilon$, with lowest order term $t^{(0)} \in (\mathbb{C}^*)^d$ satisfying $L(1, t^{(0)}) \neq 0$. Vanishing of the lowest order term in \eqref{eq:epsiloneq} implies that these solutions converge to the critical points of \eqref{eq:dlog0}. The leading term $t^{(0)} \in (\mathbb{C}^*)^d$ of the second group of solutions satisfies $L(1,t^{(0)}) = 0$, and $L(1, t(\varepsilon)) = \varepsilon L^{(0)} + $ higher order terms, with $L^{(0)} \in \mathbb{C}^*$. In particular, these solutions converge to points on $H = V(L)$. Plugging these Ans\"atze for $t$ and $L$ into \eqref{eq:epsiloneq} and setting the lowest order term to zero shows that $t^{(0)}$ are the critical points of the restriction of $t_1^{\nu_1}\cdots t_d^{\nu_d}\tilde{f}^{\nu_0}$ to $H$. By Theorem \ref{thm:huh2}, the first group of solutions consists of $(-1)^d \cdot \chi(D(F))$ points, and the second group has $(-1)^{d-1} \cdot \chi(D(F) \cap H)$ solutions. They sum up to the number of solutions to \eqref{eq:dlog1} by Equation \eqref{eq:decompDP}.

\begin{remark}
The discussion above amounts to solving the equations \eqref{eq:epsiloneq} \emph{tropically}. We have described the valuation of the coordinates $t_i, \tilde{L}$ on $D(L)$ for each $\varepsilon$-series solution. It is beyond the scope of this article to do this more rigorously. For more, see \cite[Section 7]{agostini2023likelihood}.
\end{remark}

\begin{example}
    We work out an example with $d = n = 1$. Let $f = x_1-x_0$, $L = x_1 - 2 x_0$. We are interested in the critical points of $\log {\cal L}(\varepsilon)$, given in the coordinate $t = x_1/x_0$ by 
    \[ {\cal L}(\varepsilon) \, = \, (t-1)^{\nu_0(1-\varepsilon) + \varepsilon}t^{\nu_1(1-\varepsilon) + \varepsilon} (t-2)^{-3\varepsilon}.\]
    For concreteness, we choose $\nu_0 = -11, \nu_1 = 17$. The solutions of \eqref{eq:epsiloneq} are 
    \[ t_{\pm}(\varepsilon) \, = \, \frac{29 - 27 \varepsilon \pm \sqrt{25 + 154 \varepsilon - 167 \varepsilon^2}}{2(6-7\varepsilon)}. \]
    The two solutions of \eqref{eq:dlog1} are $t_\pm(1) = -1 \mp \sqrt{3}$. When $\varepsilon \rightarrow 0$, the solutions converge to $t_+(0) = 17/6$ and $t_-(0) = 2$. Here $\{17/6\}$ is the unique critical point of \eqref{eq:dlog0} (it is a rational number since $|\chi(D(x_0x_1f))| = 1$), and $\{2 \}= V(L)$. Equation \eqref{eq:decompDP} reads $2 = 1 + 1$.  
\end{example}

Theorem \ref{thm:huh2} can be used to show that the Euler characteristic of very affine hypersurfaces is semicontinuous. This is false for general projective hypersurface families, see Example \ref{ex:conics_intro}.
\begin{theorem}
    \label{thm:Zsclosed}
    Let ${\cal X}_F$ be the family from \eqref{eq:Xveryaff2}, with $Z \subseteq \mathbb{P}^m$ any irreducible quasi-projective variety. The set $Z_{\leq k} = \bigcup_{q \leq k} Z_q$, with $Z_q = \{ z \in Z \, : \, (-1)^d \cdot \chi({\cal X}_{F,z}) = q \}$, is closed in~$Z$. 
\end{theorem}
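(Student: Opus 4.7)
The plan is to prove that the function $c(z) := (-1)^d \chi(\mathcal{X}_{F,z})$ is lower semicontinuous on $Z$, which is equivalent to the closedness of $Z_{\leq k}$ for every integer $k$. The main tool is Theorem \ref{thm:huh2}, which realizes $c(z)$ as the number of non-degenerate isolated solutions of the log-likelihood equations \eqref{eq:likelihood} in $D(F(x,z)) = \mathcal{X}_{F,z}$, combined with the implicit function theorem (IFT) applied successively in the $z$- and $\nu$-directions.

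First I would fix an arbitrary point $z^* \in Z$. By Theorem \ref{thm:huh2} applied to $F(x,z^*)$, choose $\nu^* \in \mathbb{P}^d$ in the dense open set $U$ so that the system \eqref{eq:likelihood} for $(F(x,z^*), \nu^*)$ has exactly $c(z^*)$ non-degenerate isolated solutions $x_1^*, \ldots, x_{c(z^*)}^* \in D(F(x,z^*))$. Non-degeneracy means the Jacobian of these $d$ equations with respect to $x$ is invertible at each $x_i^*$, so the IFT produces for each $i$ a holomorphic section $z \mapsto x_i(z)$ on an open neighborhood $W_i$ of $z^*$ solving \eqref{eq:likelihood} for $(F(x,z), \nu^*)$, with $x_i(z^*) = x_i^*$. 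Since non-degeneracy, mutual distinctness, and membership in the open set $D(F(x,z))$ are open conditions, shrinking to $W := \bigcap_i W_i$ the sections $x_i(z)$ remain distinct non-degenerate isolated solutions lying in $D(F(x,z))$. Hence for every $z \in W$, the system \eqref{eq:likelihood} for $(F(x,z), \nu^*)$ has at least $c(z^*)$ non-degenerate isolated solutions in $D(F(x,z))$.

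Next, an analogous application of the IFT in the $\nu$-direction furnishes the complementary upper bound: for each fixed $z$, every non-degenerate isolated solution of \eqref{eq:likelihood} at $(F(x,z), \nu^*)$ deforms uniquely to a non-degenerate isolated solution at $(F(x,z), \nu')$ for $\nu'$ in a small neighborhood of $\nu^*$. Choosing $\nu'$ additionally inside the dense open set of Theorem \ref{thm:huh2} for $F(x,z)$, this provides an injection into the $c(z)$ non-degenerate isolated solutions at $\nu'$, so the count at $\nu^*$ is at most $c(z)$. Combining the two bounds gives $c(z^*) \leq c(z)$ for all $z \in W$, establishing lower semicontinuity of $c$ at $z^*$ and therefore closedness of $Z_{\leq k}$ for every $k$.

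I expect the main obstacle to be the upper bound in the previous paragraph: the assertion that for a fixed $z$ and an arbitrary (possibly non-generic) $\nu$, the count of non-degenerate isolated critical points is bounded by the generic count $c(z)$. This hinges on the observation that non-degenerate isolated solutions are an open condition on the incidence variety cut out by \eqref{eq:likelihood} in $(x,\nu)$-space, so that IFT in $\nu$ yields the desired injection into the generic fiber; one must also check that the extended sections $x_i(\nu)$ for distinct solutions remain distinct on a common neighborhood, which follows by continuity.
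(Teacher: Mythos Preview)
Your approach is correct and shares its core with the paper's proof: both invoke Theorem~\ref{thm:huh2} to identify $(-1)^d\chi(\mathcal{X}_{F,z})$ with a count of non-degenerate isolated solutions to \eqref{eq:likelihood}, and then argue that such a count is semicontinuous in the parameters. The execution differs. The paper first uses Lemma~\ref{lem:Xveryaff} to write each $Z_q$ as a finite union of locally closed irreducible pieces $V^q_i\setminus W^q_i$, rewrites \eqref{eq:likelihood} as a polynomial system on $(\mathbb{C}^*)^{d+1}$ by introducing $y=f^{-1}$, and then quotes the parameter-continuation theorem of Sommese--Wampler to conclude that the generic count over $V^q_i\times\mathbb{P}^d_\nu$ (which equals $q$) bounds the count at every specialization; hence $V^q_i\subseteq Z_{\le q}$ and $\overline{Z_q}\subseteq Z_{\le k}$. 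You instead reprove the relevant semicontinuity by hand, via two successive applications of the implicit function theorem (first in $z$, then in $\nu$), thereby avoiding both the constructible decomposition and the external reference. Your route is more elementary and self-contained; the paper's is shorter once the reference is granted. One small point: your argument, as written, establishes closedness of $Z_{\le k}$ in the Euclidean topology. To upgrade to Zariski closedness you can simply invoke Lemma~\ref{lem:Xveryaff}, since a constructible set that is Euclidean-closed is Zariski-closed.
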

\begin{proof}
    By Lemma \ref{lem:Xveryaff}, the set $Z_q$ is constructible. That is, $Z_q$ has a decomposition 
    \[ Z_q = (V^q_1 \setminus W^q_1) \cup \cdots \cup (V^q_{m_q} \setminus W^q_{m_q}),\] 
    where $V^q_i = \overline{V^q_i \setminus W^q_i}$ are irreducible closed subvarieties of $Z$. By Theorem \ref{thm:huh2}, the Euler characteristic of ${\cal X}_{F,z}$ for $z \in V^q_i$ is the generic number of non-denegerate isolated solutions to the following system of $d+1$ parametric polynomial equations on $(\mathbb{C}^*)^{d+1}$: 
    \[ \nu_1 \, x_1^{-1} + \nu_0 \, y \, \partial_1 f  \, = \, \cdots \, = \, \nu_d \, x_d^{-1} + \nu_0 \, y\, \partial_d f \, = \, yf - 1 \, = \, 0. \]
    By \cite[Theorem 7.1.4]{sommese2005numerical}, this quantity is semicontinuous, and the maximum $q$ is attained for generic $\nu$ and $z$ in the dense open subset $V^q_i \setminus W^q_i$. We conclude that $V_i^q \subseteq Z_{\leq q} \subseteq Z_{\leq k}$. As a consequence, $\overline{Z_q} \subseteq Z_{\leq k}$, and we are done. This proof appeared in \cite[Theorem 3.1]{fevola2024principal}.
\end{proof}

Algorithm \ref{alg:eulerdiscproj} can in principle be used to compute the Euler discriminant of the family \eqref{eq:Xveryaff2}. However, there is a more efficient (but still randomized) algorithm based on Theorem \ref{thm:huh2}. We fix random parameters $\nu \in \mathbb{P}^d$ and, much like in \eqref{eq:Ybcirc}, we define the incidence 
\begin{equation} \label{eq:Ynucirc}
{\cal Y}_\nu^\circ \, = \, \left \{ (x,z) \in {\cal X}_F \, : \, \frac{\nu_i}{x_i} + \nu_0 \, \frac{\partial_i f(x,z)}{f(x,z)} \, = \, 0, \, i = 1, \ldots, d \right \}.
 \end{equation}
 The closure in $\mathbb{P}^d \times Z$ is denoted by ${\cal Y}_\nu$. The number of critical points of \eqref{eq:dlog0} drops when a solution is contained in the boundary ${\cal Y}_\nu \setminus {\cal Y}_\nu^\circ$. Hence, the Euler discriminant is contained in the projection $\pi_Z({\cal Y}_\nu \setminus {\cal Y}_\nu^\circ) = \pi_Z({\cal Y}_\nu \cap V(F))$. This is always strictly contained in $Z$.

 \begin{proposition} \label{prop:strictcontained}
     Let ${\cal X}_F$ be the family from \eqref{eq:Xveryaff2}, with $Z \subseteq \mathbb{P}^m$ any irreducible quasi-projective variety. Let ${\cal Y}_\nu^\circ$ be as in \eqref{eq:Ynucirc}, where $\nu \in \mathbb{P}^d$ is generic. The variety ${\cal Y}_\nu \cap V(F)$ has dimension at most $\dim Z-1$. Hence, $\pi_Z({\cal Y}_\nu \cap V(F)) \subsetneq Z$ is a strict containment.
 \end{proposition}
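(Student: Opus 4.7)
The plan is to mirror the proof strategy of Proposition \ref{prop:strictcontainedproj}, introducing $\nu$ as an auxiliary parameter. I would define the total incidence variety
\[ \mathcal{Y}^\circ \, = \, \left\{ (x,z,\nu) \in {\cal X}_F \times \mathbb{P}^d_\nu \, : \, \frac{\nu_i}{x_i} + \nu_0 \, \frac{\partial_i f(x,z)}{f(x,z)} = 0, \, i = 1, \ldots, d \right\}, \]
so that the fiber of the projection $q: \mathcal{Y}^\circ \rightarrow \mathbb{P}^d_\nu$ over $\nu$ is exactly ${\cal Y}_\nu^\circ$. The first substantive step is to show that $\mathcal{Y}^\circ$ is irreducible of dimension $d + \dim Z$. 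For any fixed $(x,z) \in {\cal X}_F$, the defining equations are linear in $\nu$, and, because $x_i \neq 0$ and $f(x,z) \neq 0$ on ${\cal X}_F$, they pin down a unique $\nu \in \mathbb{P}^d$ given by $\nu = (1 : -x_1 \partial_1 f / f : \cdots : -x_d \partial_d f / f)$. This furnishes a morphism ${\cal X}_F \rightarrow \mathcal{Y}^\circ$ inverse to the projection onto the first two factors, so $\mathcal{Y}^\circ \cong {\cal X}_F$ is irreducible of dimension $d + \dim Z$ (as ${\cal X}_F$ is a dense open subset of the irreducible variety $\mathbb{P}^d \times Z$).

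Next, I would apply the fiber dimension theorem to $q$. If $q$ is not dominant, then for generic $\nu$ we have ${\cal Y}_\nu^\circ = \emptyset$, hence ${\cal Y}_\nu = \emptyset$ and the statement is immediate. Otherwise, a generic fiber ${\cal Y}_\nu^\circ$ is equidimensional of dimension $(d + \dim Z) - d = \dim Z$, so its closure ${\cal Y}_\nu \subseteq \mathbb{P}^d \times Z$ has pure dimension $\dim Z$. Because ${\cal Y}_\nu = \overline{{\cal Y}_\nu^\circ}$, the open subset ${\cal Y}_\nu^\circ$ meets every irreducible component $C$ of ${\cal Y}_\nu$ in a dense open of $C$. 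The complementary closed subset ${\cal Y}_\nu \cap V(F) = {\cal Y}_\nu \setminus {\cal Y}_\nu^\circ$ is therefore a proper closed subvariety of every component, so $\dim({\cal Y}_\nu \cap V(F)) \leq \dim Z - 1$. The desired strict containment $\pi_Z({\cal Y}_\nu \cap V(F)) \subsetneq Z$ follows immediately.

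The argument is conceptually identical to that of Proposition \ref{prop:strictcontainedproj}, and the main place where care is needed is the irreducibility step. Unlike in the projective case, the parametrization $(x,z) \mapsto \nu$ involves dividing by $x_i$ and by $f(x,z)$; that this is a well-defined morphism everywhere on ${\cal X}_F$ is precisely where the very affine shape $F = x_0 \cdots x_d \, f$ enters the argument. Once this is recorded, the dimension count and the conclusion proceed exactly as in the projective proof.
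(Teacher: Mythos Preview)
Your proof is correct and follows exactly the same approach as the paper: the paper defines the same incidence variety $\mathcal{Y}^\circ$, observes it is parametrized by ${\cal X}_F$ and hence irreducible of dimension $d+\dim Z$, and then appeals to the argument of Proposition~\ref{prop:strictcontainedproj}. Your write-up is simply a more detailed version of the paper's two-sentence proof, including the explicit parametrization and the handling of the non-dominant case.
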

 \begin{proof}
    The proof is similar to that of Proposition \ref{prop:strictcontainedproj}. In this case, the incidence variety is
     \begin{equation} \label{eq:Ycirc} \mathcal{Y}^\circ \, = \, \left \{ (x,z, \nu) \in {\cal X}_F \times \mathbb{P}^d_\nu \, : \, \frac{\nu_i}{x_i} + \nu_0 \, \frac{\partial_i f(x,z)}{f(x,z)} \, = \, 0, \, i = 1, \ldots, d \right \}. \end{equation}
     It is again parametrized by ${\cal X}_F$, and hence irreducible of dimension $d + \dim Z$.
 \end{proof}

\begin{algorithm}[t] 
\small
  \caption{Euler discriminant of $\pi_{x_0x_1 \cdots x_d f}: {\cal X}_{x_0x_1 \cdots x_d f} \rightarrow Z $}\label{alg:eulerdiscveryaff}
  \textbf{Input:} bihomogeneous polynomial $f\in \mathbb{Q}[x,z]$ and the defining ideal ${\cal I}_Z$ of $Z$\\ 
  \textbf{Output:} minimal prime ideals of the Euler discriminant of $\pi_{x_0x_1 \cdots x_d f}: {\cal X}_{x_0x_1 \cdots x_d f} \rightarrow Z $
  \begin{algorithmic}[1]
    \For{$i = 0, \ldots, \dim Z$}
        \State $\nu \gets $ a random point in $\mathbb{P}^d$
        \State compute the vanishing ideal of the closure ${\cal Y}_\nu$ of ${\cal Y}_\nu^\circ$ in \eqref{eq:Ynucirc}
        \State $I_i \gets $ the defining ideal of the projection $\pi_Z({\cal Y}_\nu \cap V(F))$
    \EndFor
    \State \Return minimal primes of $I_0 + I_1 + \cdots + I_{\dim Z}$
  \end{algorithmic}
\end{algorithm}

Algorithm \ref{alg:eulerdiscveryaff} is based on ideas similar to those in Section \ref{sec:4}. Components in $\pi_Z({\cal Y}_\nu \cap V(F))$ which depend on $\nu$ are eliminated by performing the computation $\dim Z + 1$ times, and adding up the ideals. Importantly, in practice we observe that this repeated computation \emph{is not needed}. This suggests that the projection of $\overline{{\cal Y}^\circ} \cap (V(F) \times \mathbb{P}^d_\nu) \subset \mathbb{P}^d_x \times Z \times \mathbb{P}^d_\nu$ to $Z \times \mathbb{P}^d_{\nu}$ only has components of the form $V \times \mathbb{P}^d_\nu$ or $Z \times W$. Here ${\cal Y}^\circ$ is as in \eqref{eq:Ycirc}. The components of the form $V \times \mathbb{P}^d_\nu$ contribute to the Euler discriminant. Among the components of the form $Z \times W$ there are the hyperplanes identified in \cite{sattelberger2023maximum}. See Section \ref{sec:7}, Problem 5 for future work.
 
\begin{example}
Our Julia implementation of Algorithm \ref{alg:eulerdiscveryaff} is used as in Algorithm \ref{alg:stratify_proj} to compute an Euler stratification of ${\cal X}_f^*$, with $f$ the ternary quadric from \eqref{eq:Fconics}. As predicted by \eqref{eq:EAconics}, there are seven closed strata of dimension four. Among the deeper strata, there are 21 of dimension three, 27 of dimension two, and 15 of dimension one.
The latter consist of twelve lines
\[ \begin{matrix}
    \langle z_5, z_2, z_1, z_0\rangle, & 
 \langle z_5, z_3, z_2, z_0\rangle, & 
 \langle z_5, z_4, z_2, z_0\rangle, & 
 \langle z_3, z_2, z_1, z_0\rangle, & 
 \langle z_4, z_3, z_2, z_0\rangle, & 
 \langle z_4, z_3, z_1, z_0\rangle, \\
 \langle z_5, z_3, z_1, z_0\rangle, & 
 \langle z_5, z_4, z_1, z_0\rangle, & 
 \langle z_5, z_4, z_3, z_0\rangle, &
 \langle z_5, z_3, z_2, z_1\rangle, & 
 \langle z_5, z_4, z_3, z_2\rangle, & 
 \langle z_5, z_4, z_3, z_1\rangle, 
\end{matrix}\]
and three quadric curves: $\langle 4z_3z_5 - z_4^2, z_2, z_1, z_0\rangle,
 \langle 4 z_0 z_5 - z_2^2, z_4, z_3, z_1\rangle, 
 \langle 4 z_0 z_3 - z_1^2, z_5, z_4, z_2\rangle $.
\end{example}

\vspace{-0.4cm}

\section{Case studies} \label{sec:6}

In this section we present several case studies of Euler stratifications. We compute the full stratification of binary octics in Section \ref{sec:61} ($n = 8$ in \eqref{eq:XprojP1}). In Section \ref{sec:62}, we recall the relation between the Euler stratification of bilinear forms and the matroid stratification of the Grassmannian. Sections \ref{sec:63} and \ref{sec:64} discuss the applications in physics and statistics respectively. Finally, in Section \ref{sec:65}, we stratify hyperplane sections of Hirzebruch surfaces. Our code uses the packages \texttt{Oscar.jl} (v1.0.4) \cite{OSCAR}  and \texttt{HomotopyContinuation.jl} (v2.9.4)~\cite{breiding2018homotopycontinuation}.

\vspace{-0.2cm}

\subsection{Binary octics} \label{sec:61}

We come back to binary forms and their coincident root loci as discussed in Section \ref{sec:3}. Our goal is to compute the full Euler stratification for a binary octic ($n = 8$). According to Theorem \ref{thm:StratiRootsProj}, this requires the computation of ideals $I(\nabla_{\lambda})$ of coincident root loci for partitions $\lambda$ of up to eight. By Theorem \ref{thm:strataXveryaffP1}, the Euler stratification of eight points in $\mathbb{C}^*$ is easily deduced from the same ideals. We describe our method of computing $I(\nabla_\lambda)$.\par 
First, we compute the degrees of a minimal generating set of $I(\nabla_{\lambda})$. In \cite[Table 1]{lee2016duality}, these numbers are reported for partitions of up to seven. We extend this table to the case $\lambda \vdash 8$ using finite field computations, see Table \ref{table}.
\begin{table}
\begin{center}
    \caption{Degrees of generators for the ideal of coincident root loci of a binary octic.}
    \label{table}
    \begin{tabular}{cc|cc} 
    \toprule 
    $\lambda$ & generators of $I(\nabla_{\lambda})$ & $\lambda$ &  generators of $I(\nabla_{\lambda})$\\
    \midrule 
    (8) & $2^{28}$ & & \\
    (7,1) & $2^{15}$ & (4,4) & $3^{74}$\\
    (6,1,1) & $2^6,~3^1$ & (3,1,1,1,1,1) & $6^1,~7^6,~8^6$\\
    (6,2) & $2^6,~3^1,~4^{46}$ & (3,2,1,1,1) & $6^1,~7^6,~8^{6},~10^{46}$\\
    (5,1,1,1) & $2^1,~3^6,~4^6$ & (3,2,2,1) & $6^1,~7^6,~8^{546}$\\
    (5,2,1) & $2^1,~3^6,~4^6,~6^{54}$ & (3,3,1,1) & $5^{84}$\\
    (5,3) & $2^1,~3^{19},~4^{64}$ & (3,3,2) & $4^1,~5^{166},~6^{100}$\\
    (4,1,1,1,1) & $4^5,~5^9,~6^1$ & (2,1,1,1,1,1,1) & $14^1$\\
    (4,2,1,1) & $4^6,~5^9,~6^1,~8^{54}$ & {\color{gray} (2,2,1,1,1,1)} & {\color{gray} $11^{19}$}\\
    (4,2,2) & $4^{15},~5^{221}$ & (2,2,2,1,1) & $8^{120}$\\
    (4,3,1) & $4^{45}$ & (2,2,2,2) & $5^{286}$\\
    \bottomrule 
    \end{tabular} 
\end{center}
\end{table}
Here, the entry for $\lambda = (2^2,1^4)$ is conjectural; the finite field computation did not terminate within seven days (see Section \ref{sec:7}, Problem 1). Using the parametrization of $\nabla_{\lambda}$ given by \eqref{eq:paramSlambda}, we sample points on $\nabla_{\lambda}$ and interpolate with polynomials of the computed degrees using linear algebra over $\qq$. In this process, we exploit the homogeneity of $I(\nabla_{\lambda})$ with respect to the bigrading given by the exponent matrix 
\[
    \begin{pmatrix}
        8 & 7 & \dots & 1 & 0 \\
        0 & 1 & \dots & 7 & 8 \\
    \end{pmatrix}.
\]
That grading is obtained from the $\C^*$-action on the space of binary octics $\P^8$. It divides the set of degree $d$ monomials in $\C[z_0,\dots,z_8]$ into $8d+1$ many buckets of monomials with the same bidegree. The resulting linear system that needs to be solved becomes substantially smaller. For example, the maximal number of monomials of degree eleven in one bucket is 2430, compared to 75582 monomials overall. The implementation as well as a database with all ideals for the Euler stratification of a binary octic in $\C^*$ can be found at \eqref{eq:mathrepo}. 

\subsection{Matroid stratification of bilinear forms} \label{sec:62}
We start with very affine hypersurfaces defined by bilinear forms. For $d_1, d_2 \geq 1$, consider
\begin{equation}  \label{eq:fbilinear} f(x,y,z) \, = \, \sum_{i= 0}^{d_1} \sum_{j = 0}^{d_2} z_{ij} \, x_i \, y_j.  \end{equation}
The parameter space $Z = \mathbb{P}^{(d_1+1)(d_2+1)-1}$ is that of $(d_1 + 1) \times (d_2 + 1)$ matrices $z = (z_{ij})_{i,j}$. Since $f$ is bilinear, it is natural to consider its zero locus in the torus of $\mathbb{P}^{d_1} \times \mathbb{P}^{d_2}$. In this subsection, we set $(\mathbb{C}^*)^{d_1+d_2} \simeq T \subset \mathbb{P}^{d_1} \times \mathbb{P}^{d_2}$ and we consider the following family:
\[ {\cal X}_f^* \, = \, \{ (x,y,z) \in T \times Z \, : \, f(x,y,z) \neq 0 \}.\]
The fibers ${\cal X}_{f,z}^*$ of ${\cal X}_f^* \rightarrow Z$ represent independence models in algebraic statistics. Their Euler characteristic was studied in \cite{clarke2023matroid}.
One of the main results \cite[Theorem 1.3]{clarke2023matroid} states that for any $z \in Z$, the signed Euler characteristic $(-1)^{d_1 + d_2} \cdot \chi({\cal X}_{f,z}^*)$ equals the M\"obius invariant of the rank-$(d_1 + 1)$ matroid represented by the $(d_1+1) \times (d_1 + d_2 + 2)$ matrix $[{\rm id}_{d_1+1} \, \, \, z]$.

A corollary is that an Euler stratification of ${\cal X}_f^* \rightarrow Z$ is induced by the matroid stratification of the Grassmannian ${\rm Gr}(d_1+1,d_1+d_2+2)$. In particular, the Euler discriminant is the union of all hypersurfaces defined by the $(d_1+1) \times (d_1+1)$ minors of $[{\rm id}_{d_1+1} \, \, \, z]$. Equivalently, these are the hypersurfaces defined by all minors of the matrix $z$. 

If $z \in \mathbb{R}^{(d_1+1) \times (d_2+1)}$ has real entries, then the M\"obius invariant $(-1)^{d_1+d_2} \cdot \chi({\cal X}^*_{f,z})$ equals the number of bounded cells in the following hyperplane arrangement complement:
\[ \left \{ t \in \mathbb{R}^{d_1} \, : \, t_1 \cdots t_{d_1} \, \left (z_{00} +\sum_{i=1}^{d_1} z_{i0} t_i \right ) \cdots \left (z_{0d_2}+ \sum_{i=1}^{d_1} z_{id_2} t_i \right ) \neq 0 \right \}. \]

\begin{example}
    For $d_1 = 2, d_2 = 1$, the polynomial $f$ and the matrix $[{\rm id}_{d_1+1} \, \, \, z]$ are given by 
    \[ \begin{matrix} f \, = \,  z_{00} \, x_0y_0 + z_{10}\,  x_1y_0 + z_{20} \, x_2y_0  \\ \quad \quad + \, z_{01} \,  x_0y_1 + z_{11} \, x_1 y_1 + z_{21} \,  x_2 y_1, \end{matrix} \quad \quad \quad [{\rm id}_{d_1+1} \, \, \, z] \, = \, \begin{pmatrix}
        1 & 0 & 0 & z_{00} & z_{01} \\
        0 & 1 & 0 & z_{10} & z_{11} \\ 
        0 & 0 & 1 & z_{20} & z_{21}
    \end{pmatrix}.\]
    For generic choices of $z_{ij} \in \mathbb{R}$, the lines $\{z_{0i} + z_{1i}t_1 + z_{2i} t_2 = 0\}, i = 0, 1$, together with $\{t_1 = 0 \}$ and $\{t_2 = 0\}$, define three bounded cells in $\mathbb{R}^2$.  The Euler discriminant of ${\cal X}_f^*$ vanishes when at least one of these cells collapses. It is given by the product of the minors of~$z$:  \[\Delta_\chi \, = \,  z_{00}z_{10}z_{20}z_{01}z_{11}z_{21}(z_{00}z_{11}-z_{01}z_{10})(z_{00}z_{21}-z_{01}z_{20})(z_{10}z_{21}-z_{11}z_{20}). \qedhere\]
\end{example}

\subsection{Feynman integrals} \label{sec:63}

Let ${\cal X}_{f}^*$ be the family of very affine hypersurfaces from \eqref{eq:Xveryaff2}. In particle physics, the signed Euler characteristic of the fiber ${\cal X}^*_{f,z}$ counts the number of \emph{master integrals} \cite{bitoun2019feynman}. These are (regularized) Feynman integrals which form a basis for the \emph{twisted cohomology} of ${\cal X}^*_{f,z}$ with respect to the ${\rm dlog}$ form \eqref{eq:dlog0}, see \cite{bitoun2019feynman,mastrolia2019feynman}. We consider Feynman integrals in parametric representation \cite[Section 2.5.4]{weinzierl}. In that context, the polynomial $f$ is the \emph{graph polynomial} associated to a Feynman graph. The variables $x$ are the integration variables, called \emph{Schwinger parameters}, and the parameters $z$ are \emph{kinematic data}, such as momenta and masses of fundamental particles. The theory of twisted cohomology extends to the more general class of \emph{Euler integrals} studied in~\cite{agostini2022vector,fourlectures}, where $f$ can be any polynomial.

Since $x$ is integrated out, a Feynman integral represents a multivalued function of $z$. The goal of \emph{Landau analysis} is to detect branch and pole loci of this multivalued function \cite{mizera2022landau}. By \cite[Theorem 56]{brown2009periods}, these loci are contained in the union of codimension-one strata in a Whitney stratification of ${\cal X}_f^* \rightarrow Z$. Brown defines the \emph{Landau variety} as this union \cite[Definition 54]{brown2009periods}. Algorithms for computing the Landau variety can be found in \cite{helmer2024landau,panzer2015algorithms}. The paper \cite{fevola2024principal} conjectures that the branch and pole loci are detected by a drop in the number of master integrals, i.e., by the Euler discriminant of the family ${\cal X}_{f}^*\rightarrow Z$. However, no algorithm for computing the Euler discriminant is presented in \cite{fevola2024principal}. Instead, an approximation of $\nabla_\chi$ is computed, called the \emph{principal Landau determinant}, using efficient techniques from numerical algebraic geometry inspired by \cite{mizera2022landau}. Here, we use Algorithm \ref{alg:eulerdiscveryaff} to compute the Euler discriminant of several Feynman integrals, including some appearing in \cite{helmer2024landau}. In all examples we checked, the Euler discriminant coincides with Brown's Landau variety. In future work, we aspire to develop numerical versions of our algorithms to tackle more challenging diagrams.

\begin{figure}
    \centering
    \includegraphics[width=0.75\linewidth]{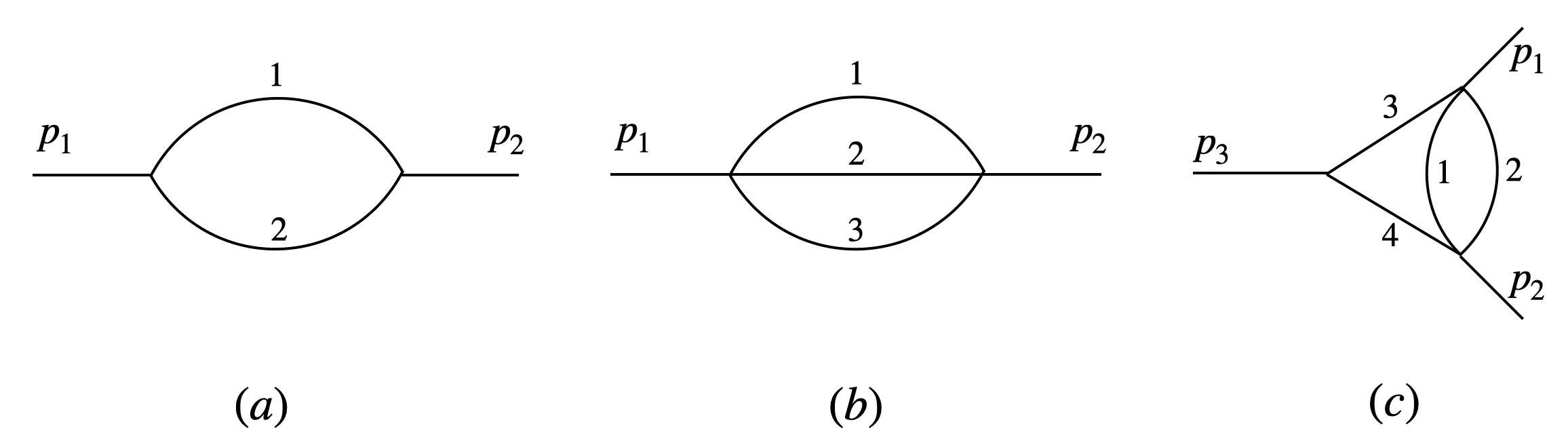}
    \caption{Three Feynman diagrams: bubble, sunrise and parachute. }
    \label{fig:feynman}
\end{figure}
\begin{example}
    We start with the bubble graph (Figure \ref{fig:feynman}(a)), whose graph polynomial is 
    \[ f \, = \, (x_0 - m_1x_1 - m_2 x_2)(x_1+x_2) + s \, x_1 x_2.\]
    The variety ${\cal X}_{f}^* \subset \mathbb{P}^2 \times Z$ is a family of curve complements over $Z = \mathbb{C}^3$, where $Z$ has coordinates $(m_1,m_2,s)$. The parameters represent the squared masses $m_i$ of particles traveling along the internal edges of the diagram, and the Lorenz invariant (or \emph{Mandelstam invariant}) $s$ depending on momenta of external particles. Algorithm \ref{alg:stratify_proj} computes the following strata:
    \[\begin{matrix} {\rm codim} \, 1: &  \langle  m_1 \rangle, \langle m_2 \rangle ,  \langle s \rangle, \langle m_1^2 - 2m_1m_2 - 2m_1s + m_2^2 - 2m_2s + s^2 \rangle
    \\
    {\rm codim} \, 2: & \langle m_2 - s, m_1 \rangle , \langle s, m_1\rangle , \langle m_2, m_1\rangle, \langle s, m_1 - m_2\rangle, \langle m_2, m_1 - s\rangle, \langle s, m_2 \rangle \\
    {\rm codim} \, 3: & \langle m_1, m_2, s \rangle
    \end{matrix}
    \]
    This Euler stratification agrees with the Whitney stratification in \cite[Section V, \S A]{helmer2024landau}.
\end{example}

\begin{example}
    Next, we consider the sunrise graph, shown in part (b) of Figure \ref{fig:feynman}. The (homogenized) graph polynomial $f$ defines a family of surface complements over $Z = \mathbb{C}^4$:
    \[ f \, = \, (x_0-m_1x_1 -m_2x_2 - m_3 x_3)(x_1x_2 + x_1x_3 + x_2x_3) + s \, x_1x_2x_3.
    \]
    The Euler discriminant agrees with the principal Landau determinant from \cite[Example~3.7]{fevola2024principal}: 
    \[ \begin{matrix}  \Delta_\chi \, = \, m_1 \cdot m_2 \cdot m_3 \cdot s \cdot (m_1^4 - 4 m_1^3 m_2 - 4 m_1^3 m_3 - 4 m_1^3 s + 6 m_1^2 m_2^2 + 4 m_1^2 m_2 m_3  \\ \quad \quad \quad + \, 4 m_1^2 m_2 s  + 6 m_1^2 m_3^2 + 4 m_1^2 m_3 s + 6 m_1^2 s^2 - 4 m_1 m_2^3 + 4 m_1 m_2^2 m_3 + 4 m_1 m_2^2 s \\ \quad \quad \quad + 4 m_1 m_2 m_3^2   - 40 m_1 m_2 m_3 s + 4 m_1 m_2 s^2 - 4 m_1 m_3^3 + 4 m_1 m_3^2 s + 4 m_1 m_3 s^2  \\ \quad \quad \quad   - \, 4 m_1 s^3 + m_2^4  - 4 m_2^3 m_3 - 4 m_2^3 s + 6 m_2^2 m_3^2 + 4 m_2^2 m_3 s + 6 m_2^2 s^2 - 4 m_2 m_3^3 \\ \quad \quad \quad + \, 4 m_2 m_3^2 s + 4 m_2 m_3 s^2 - 4 m_2 s^3 + m_3^4 - 4 m_3^3 s  + 6 m_3^2 s^2 - 4 m_3 s^3 + s^4).
    \end{matrix} \qedhere  \]
\end{example}

\begin{example}
    The graph polynomial of the parachute diagram (Figure \ref{fig:feynman}(c)) reads
    \[ f \, = \, (x_0 - \sum_{i=1}^4 m_i x_i)\cdot ((x_1+x_2)(x_3+x_4) + x_1x_2) + x_1x_2(M_1x_3 + M_2 x_4) + M_3 x_3x_4(x_1+x_2),\]
    where $M_i$ is the squared mass of the $i$-th external particle. With the specialization $m_1 = 1, m_4 = 2, M_1 = -1$ and all other parameters except $M_3$ equal to zero, we compute that the Euler discriminant agrees with the Landau variety in \cite[Section V, \S C]{helmer2024landau}: $\Delta_\chi = M_3(M_3-1)(M_3+1)(M_3-2)(M_3+2)$. In particular, the Euler discriminant contains the components which are missing in the principal Landau determinant, see \cite[Equation (3.18)]{fevola2024principal}. These components lie on a component of the Landau variety identified in \cite[Equation~(6.15)]{berghoff2022hierarchies}.
\end{example}

\subsection{Maximum likelihood estimation for toric models} \label{sec:64}

Let $(\mathbb{C}^*)^d \simeq T \subset \mathbb{P}^d$ be the dense torus of $\mathbb{P}^d$. The $(d+1) \times (m+1)$-matrix $A$ from Section \ref{sec:5} encodes a projective toric variety $X_A \subset \mathbb{P}^m$. It is obtained as the image closure of the monomial map $\phi_A: T \rightarrow \mathbb{P}^m$ given by $x \mapsto (x^{a_0}: \cdots : x^{a_m})$. For fixed $z \in \mathbb{P}^m$, the \emph{scaled toric variety} $z \star X_A$ is obtained from $X_A$ by applying the diagonal scaling $X_A \ni y \mapsto (z_0y_0 : \cdots: z_my_m)$. In algebraic statistics, the intersection of the scaled toric variety $z \star X_A$ with the $m$-simplex $\mathbb{P}^m_{>0}$ represents a \emph{discrete exponential family} or \emph{log-linear model} \cite{amendola2019maximum}. The maximum likelihood (ML) degree for this model is the signed Euler characteristic of $(z \star X_A) \setminus {\cal H}$, where ${\cal H} = V(y_0y_1\cdots y_m(y_0 + y_1 + \cdots + y_m))$ and $y_i$ are coordinates on $\mathbb{P}^m$. Reparametrizing if necessary, we may assume that the monomial map $\phi_A$ is one-to-one. Then the restriction of $\phi_A$ to ${\cal X}_{f,z}^*$, with $f$ as in \eqref{eq:fGKZ}, is an isomorphism ${\cal X}_{f,z}^* \simeq (z \star X_A) \setminus {\cal H}$. It follows that the Euler stratification of ${\cal X}_{f}^* \rightarrow \mathbb{P}^m$ from \eqref{eq:Xveryaff2}, with $f$ as in \eqref{eq:fGKZ}, completely describes the dependency of the ML degree of the discrete exponential family on the model parameters $z$. 

Notice that $\phi_A$ identifies $({\cal X}_{f,z}^*)^c \subset T$ with the hyperplane section $H_z \cap {\rm im} \, \phi_A \subset H_z \cap X_A$, where $H_z = V(z_0y_0 + \cdots + z_my_m)$. Since $\chi({\cal X}_{f,z}^*) = - \chi(({\cal X}_{f,z}^*)^c)$, we are stratifying $(\mathbb{P}^m)^\vee$ according to the Euler characteristic of $H_z$ intersected with the dense torus of $X_A$.

\begin{example} Following \cite{amendola2020maximum,amendola2023likelihood}, we consider matrices $A$ whose columns are lattice points of reflexive polygons. The corresponding toric surfaces are the Gorenstein Fano toric surfaces, of which there exist 16 isomorphism classes \cite[Figure 1]{amendola2020maximum}. Algorithm \ref{alg:stratify_proj} computes the Euler stratification for the first five reflexive polygons using Algorithm \ref{alg:eulerdiscveryaff} for the Euler discriminant. These are the polygons appearing in \cite[Table 2]{amendola2020maximum}. The results are summarized in Figure \ref{fig:reflexive}. The figure contains the polygons, the corresponding $A$-matrices, and a list of closed, irreducible strata in $\mathbb{P}^m$. The notation for a closed stratum $\overline{S}$ is $(\dim \overline{S}, \deg \overline{S}, \chi({\cal X}_{f,z}^*))^e$, where $z \in S$ is a generic point on the stratum and the exponent $e$ indicates how often a stratum of that type occurs. The generators for the defining ideal of each stratum and the code for reproducing these numbers can be found online at \eqref{eq:mathrepo}. Our results can be used to compute the \emph{ML degree drop} from \cite[Definition 2.7]{amendola2023likelihood} for any member of the respective model families.
\end{example}

\begin{figure}
    \centering
    \includegraphics[width=0.88\linewidth]{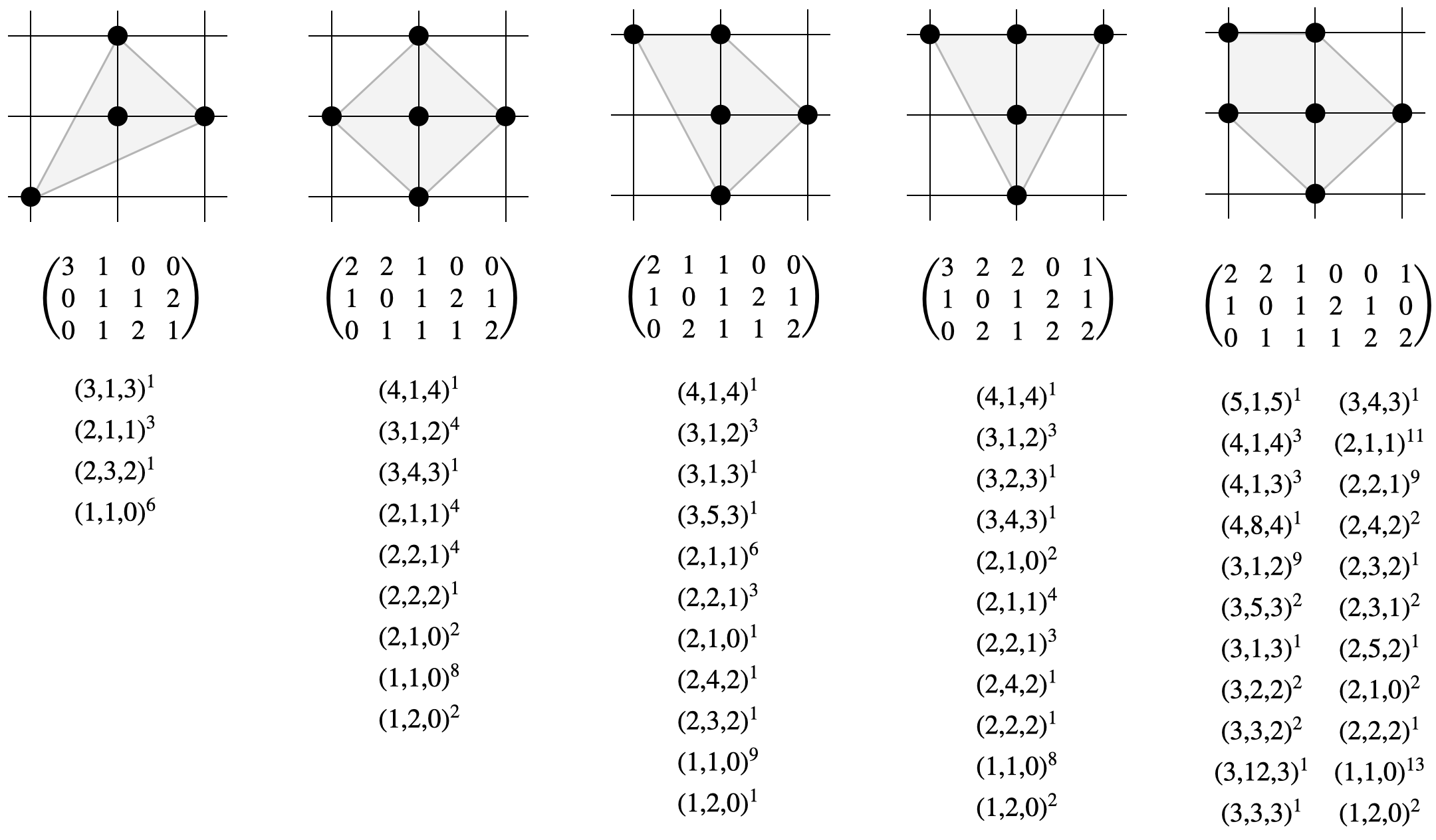}
    \caption{Euler stratifications for hyperplane sections of five toric Fano surfaces. }
    \label{fig:reflexive}
\end{figure}

\subsection{Hirzebruch surfaces} \label{sec:65}

A Hirzebruch surface is a toric surface $X_A \subset \mathbb{P}^{n_0 + n_1 +1}$ obtained from a matrix of the form
\[ A \, = \, \begin{pmatrix}
    n_1+1 & n_1 & \cdots & n_1-n_0+1 & n_1 & n_1-1 & \cdots & 0\\
    0 & 1 & \cdots & n_0 & 0 & 1 & \cdots & n_1 \\
     0 & 0 & \cdots & 0 & 1 & 1 & \cdots & 1  
\end{pmatrix}, \]
where $n_0 \leq n_1$ are positive integers.
As explained in Section \ref{sec:64}, a hyperplane section $H_z \cap {\rm im} \, \phi_A$ is isomorphic to a curve in the torus $(\mathbb{C}^*)^2 \simeq T \subset \mathbb{P}^{2}$. That curve is defined by
\[ f \, = \, z_{00}x_0^{n_1+1} + z_{10}x_0^{n_1}x_1 + \cdots + z_{n_00} x_0^{n_1-n_0+1}x_1^{n_0} + z_{01}x_0^{n_1}x_2 + z_{11}x_0^{n_1-1} x_1x_2 + \cdots + z_{n_11} x_1^{n_1}x_2. \]
This gives rise to a family of curves $\X^c_{f} \subset T \times \P^{n_0+n_1+1}$. The following Lemma already appeared as \cite[Theorem 17]{amendola2019maximum}. Here, we give a geometric proof of the statement. 
\begin{lemma} \label{lem:hirz}
    Let $f$ be as above. For any $z \in \mathbb{P}^{n_0+n_1+1}$, the signed Euler characteristic of $(\X^*_{f,z})^c=\{ x \in T\, : \, f(x;z) = 0 \}$ equals the number of roots $t \in \mathbb{C}^*$ of
    \begin{equation} \label{eq:lemhirz} (z_{00} + z_{10} t + z_{20}t^2 + \cdots + z_{n_00} t^{n_0})(z_{01} + z_{11} t + z_{21}t^2 + \cdots + z_{n_11} t^{n_1} ) = 0.\end{equation}
\end{lemma}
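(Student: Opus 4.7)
The plan is to reduce the problem to a one-variable projection and then apply additivity of the Euler characteristic. After dehomogenizing $f$ by dividing by $x_0^{n_1+1}$, introduce affine coordinates $t_1 = x_1/x_0$, $t_2 = x_2/x_0$ on the torus $T \simeq (\mathbb{C}^*)^2 \subset \mathbb{P}^2$. In these coordinates the polynomial becomes
\[
    \tilde{f}(t_1,t_2) \,=\, p(t_1) \,+\, t_2\, q(t_1), \qquad p(t_1) = \sum_{i=0}^{n_0} z_{i0}\, t_1^i, \quad q(t_1) = \sum_{i=0}^{n_1} z_{i1}\, t_1^i.
\]
So $(\X^*_{f,z})^c$ is cut out by a single equation that is linear in $t_2$. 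This linearity is what the Hirzebruch surface structure buys us: projection to the $t_1$-coordinate realizes the curve as a rational parametrization, up to issues at the zeroes of $p$ and $q$.

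Next, I would study the coordinate projection $\pi\colon (\X^*_{f,z})^c \to \mathbb{C}^*$, $(t_1,t_2) \mapsto t_1$, by partitioning the base according to the vanishing behavior of $p$ and $q$. Write $A = V(p) \cap \mathbb{C}^*$, $B = V(q)\cap\mathbb{C}^*$. A direct case analysis of $\tilde{f}=0$ shows that the fibers of $\pi$ are:
\begin{itemize}
    \setlength\itemsep{0em}
    \item a single point $t_2 = -p(t_1)/q(t_1) \in \mathbb{C}^*$ for $t_1 \in \mathbb{C}^* \setminus (A\cup B)$;
    \item empty for $t_1 \in A\setminus B$ (forces $t_2 = 0$) and for $t_1 \in B\setminus A$ (no solution);
    \item all of $\mathbb{C}^*$ for $t_1 \in A \cap B$ (the equation is satisfied identically in~$t_2$).
\end{itemize}

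Finally, apply additivity of $\chi$ for constructible sets to this stratification. The first stratum of $(\X^*_{f,z})^c$ is isomorphic to $\mathbb{C}^*\setminus (A\cup B)$, contributing $\chi(\mathbb{C}^*) - |A\cup B| = -|A\cup B|$. Each common root in $A\cap B$ contributes a copy of $\mathbb{C}^*$, hence $0$. Summing,
\[
    \chi\bigl((\X^*_{f,z})^c\bigr) \,=\, -|A\cup B| \,+\, 0 \,=\, -\bigl|\{t\in\mathbb{C}^* : p(t)q(t) = 0\}\bigr|,
\]
which is exactly the claimed identity.

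The only subtlety, and what I would flag as the main obstacle, is the handling of common roots of $p$ and $q$: when $\gcd(p,q)$ has roots in $\mathbb{C}^*$, the curve $\tilde f = 0$ becomes reducible (containing vertical lines $\{t_1 = \text{const}\}$), and one might worry that these $1$-dimensional fibers inflate $\chi$. Additivity resolves this cleanly because $\chi(\mathbb{C}^*) = 0$, so every common root contributes nothing. Consequently only the \emph{set} of roots of $pq$ in $\mathbb{C}^*$ matters, matching the right-hand side of \eqref{eq:lemhirz} counted without multiplicity.
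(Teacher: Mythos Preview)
Your proof is correct and essentially identical to the paper's own argument: both dehomogenize to write $\tilde f = p(t_1) + t_2\,q(t_1)$, project to the $t_1$-coordinate, partition $\mathbb{C}^*$ according to the vanishing of $p$ and $q$, and then apply additivity (the paper phrases this as ``excision and multiplicativity along fibrations'') using $\chi(\mathbb{C}^*)=0$ to kill the contribution from common roots. The only cosmetic difference is notation ($p,q$ versus the paper's $g_0,g_1$; $A,B$ versus $U_0,U_1,U_{01}$).
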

\begin{proof}
    Fix $z \in \mathbb{P}^{n_0 + n_1 + 1}$. Let $ (t_1, t_2) = (x_0^{-1}x_1, x_0^{-1}x_2)$ be coordinates on $T$. The curve $({\cal X}_{f,z}^*)^c$ is the zero locus of $x_0^{-(n_1+1)}f(x;z) = g_0(t_1;z) + t_2 \cdot g_1(t_1;z)$. Here the polynomials $g_0, g_1$ are precisely the factors appearing in Equation \eqref{eq:lemhirz}. Let $\pi_1: (\X^*_{f,z})^c \rightarrow \mathbb{C}^*$ be the projection to the $t_1$-coordinate. Let $U \subset \mathbb{C}^*$ be the open subset of points $t_1$ satisfying $g_0(t_1;z)g_1(t_1;z) \neq 0$. The restriction of $\pi_1$ to $\pi_1^{-1}(U)$ is a fibration whose fibers consist of a single point. The complement $\mathbb{C}^* \setminus U$ is a disjoint union $U_0 \sqcup U_1 \sqcup U_{01}$, where
\begin{align*}
        U_0  &= \{ t_1 \in \mathbb{C}^*\, : \, g_0(t_1;z) = 0, g_1(t_1;z) \neq 0 \},  \\
        U_1  &= \{ t_1 \in \mathbb{C}^*\, : \, g_0(t_1;z) \neq 0, g_1(t_1;z) = 0 \}, \\ 
        U_{01}  &= \{ t_1 \in \mathbb{C}^*\, : \, g_0(t_1;z) = g_1(t_1;z) = 0 \}.
        \end{align*}
    The fiber of $\pi_x$ over any point of $U_0 \sqcup U_1$ is empty, and the fiber over any point of $U_{01}$ is $\mathbb{C}^*$. Using the excision property and multiplicativity along fibrations of $\chi(\cdot)$, we obtain
    \[ 
        \chi((\X^*_{f,z})^c) \, = \, \chi(U) \cdot \chi(\{{\rm pt}\}) + \chi(U_0) \cdot \chi(\emptyset) + \chi(U_1) \cdot \chi(\emptyset) + \chi(U_{01}) \cdot \chi(\mathbb{C}^*) = \chi(U).
    \]
    Using excision once more we see that $\chi(({\cal X}_{f,z}^*)^c) = - \chi(\{t_1 \in \mathbb{C}^* \, : \, g_0(t_1;z)g_1(t_1;z) = 0 \})$. 
\end{proof}
Lemma \ref{lem:hirz} reduces the Euler stratification of $\X^*_{f}$ to the case $d=1$ as in Section \ref{sec:3}.
\begin{theorem} \label{thm:hirz}
    Let $\pi_f\colon \X^*_{f} \rightarrow \P^{n_0+n_1+1}$ be a family of curves with $f$ as above. There exists an Euler stratification of $\pi_f$ with $\sum_{i=0}^{n_0 + n_1} (n_0 + n_1 + 1 - i) {\cal P}(i) + 1$ many strata, where $\mathcal{P}(i)$ is the number of partitions of $i$. All but one of these strata correspond bijectively to the strata in the Euler stratification  of $\mathbb{P}^{n_0 + n_1}$ induced by a univariate polynomial of degree $n_0 + n_1$ on $\mathbb{C}^*$. 
\end{theorem}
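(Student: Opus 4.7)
The strategy is to reduce to the univariate Euler stratification of Theorem \ref{thm:strataXveryaffP1} via Lemma \ref{lem:hirz}. That lemma tells us that whenever $g_0(t;z) \cdot g_1(t;z) \not\equiv 0$, the signed Euler characteristic $-\chi((\mathcal{X}^*_{f,z})^c)$ equals the number of distinct roots in $\mathbb{C}^*$ of the product polynomial $h(t;z) := g_0(t;z)\, g_1(t;z)$. Hence $\chi$ factors through the rational multiplication map
\[
    \phi \colon \mathbb{P}^{n_0+n_1+1} \dashrightarrow \mathbb{P}^{n_0+n_1}, \qquad z \longmapsto h(t;z),
\]
which is regular on $U = \mathbb{P}^{n_0+n_1+1} \setminus V$, where $V = \{g_0 \equiv 0\} \cup \{g_1 \equiv 0\}$ is the indeterminacy locus (a disjoint union of two linear subspaces). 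Since every polynomial of degree at most $n_0 + n_1$ admits a factorization into factors of degrees at most $n_0$ and at most $n_1$, the map $\phi$ is surjective.

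I would then construct the Hirzebruch stratification as follows. For each of the $N = \sum_{i=0}^{n_0+n_1}(n_0+n_1+1-i)\mathcal{P}(i)$ univariate strata $S_{m_0|\lambda|m_\infty} \subset \mathbb{P}^{n_0+n_1}$ from Theorem \ref{thm:strataXveryaffP1}, define a corresponding Hirzebruch stratum $\tilde{S}_{m_0|\lambda|m_\infty}$ inside $U$, essentially as the preimage $\phi^{-1}(S_{m_0|\lambda|m_\infty})$, and take $V$ itself as the single extra stratum. This yields the claimed count $N + 1$. Constancy of $\chi$ on each stratum is immediate: Lemma \ref{lem:hirz} gives $\chi = -|\lambda|$ on $\tilde{S}_{m_0|\lambda|m_\infty}$, while on $V$ a direct computation in torus coordinates shows the fiber $(\mathcal{X}^*_{f,z})^c$ is either empty or a finite disjoint union of copies of $\mathbb{C}^*$, forcing $\chi = 0$ everywhere on $V$. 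Coverage and pairwise disjointness then follow from the surjectivity of $\phi$ and the partition of $\mathbb{P}^{n_0+n_1}$ into univariate Euler strata.

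The principal obstacle is verifying the closure axiom of Definition \ref{def:ES}. Restricted to $U$, this transfers cleanly from Theorem \ref{thm:strataXveryaffP1}(ii) through the morphism $\phi|_U$: the closure of a preimage is the preimage of the closure, and hence a disjoint union of Hirzebruch strata. The delicate part is controlling $V \cap \overline{\tilde{S}_{m_0|\lambda|m_\infty}}$ inside $\mathbb{P}^{n_0+n_1+1}$, i.e., the limits of sequences in $\tilde{S}_{m_0|\lambda|m_\infty}$ as they approach the indeterminacy locus of $\phi$. I would treat this by blowing up $\mathbb{P}^{n_0+n_1+1}$ along $V$, where $\phi$ extends to a regular morphism, pulling back the univariate stratification to the blow-up, and then contracting the exceptional divisor so that all of $V$ collapses to a single stratum. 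Verifying that this contraction produces a valid Euler stratification with exactly one extra stratum $V$ --- rather than forcing a finer decomposition of $V$ --- is the technical heart of the argument and can be carried out via a careful tangent-direction analysis of $\phi$ near $V$.
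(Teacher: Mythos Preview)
Your reduction via the multiplication map and the identification of the indeterminacy locus as the single extra stratum is exactly the paper's argument: the paper writes $\rho$ for your $\phi$ and $E$ for your $V$, observes in one sentence that $\chi\equiv 0$ on $E$, and counts via Theorem~\ref{thm:strataXveryaffP1}. The paper does not discuss the closure axiom at all, so your blow-up machinery is not needed to match its proof.

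Your concern about axiom~2 of Definition~\ref{def:ES} is, however, well-founded --- and your claimed resolution does not hold. Take the stratum $\tilde S=\phi^{-1}\bigl(S_{n_0+n_1\mid\emptyset\mid 0}\bigr)$, the preimage of the class of $t^{n_0+n_1}$. The only factorization is $g_0=at^{n_0}$, $g_1=bt^{n_1}$ with $ab\neq 0$, so $\tilde S$ is the punctured projective line $\{z_{ij}=0\text{ for }(i,j)\neq(n_0,0),(n_1,1)\}\cap U$. Its closure in $\mathbb P^{n_0+n_1+1}$ is the full line, which meets $V$ in exactly the two coordinate points --- a proper nonempty subset of $V$. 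Hence $\overline{\tilde S}$ is \emph{not} a union of the proposed strata, and no tangent-direction analysis on the blow-up will force the contraction to produce a single valid stratum $V$; the obstruction is intrinsic to the partition, not to how you compactify $\phi$. The decomposition with $N+1$ pieces satisfies axioms~1 and~3 of Definition~\ref{def:ES}, which is what the paper actually establishes, but your asserted verification of axiom~2 with $V$ undivided cannot be carried out.
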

\begin{proof}
    Consider the surjective morphism $\rho\colon \P^{n_0+n_1+1} \setminus E \rightarrow \P^{n_0 + n_1}$ sending the coefficients of $f(x;z) = f_0(x_0,x_1,z) + x_2f_1(x_0,x_1,z)$ to the coefficients of $f_0f_1$. Here, $E$ is the base locus $E = \{z_{00}=\dots = z_{n_00}=0\}\cup \{z_{01}=\dots = z_{n_11}=0\}$. By Lemma \ref{lem:hirz}, the preimages of the Euler strata of $f_0f_1$ in $\P^{n_0 + n_1}$ under $\rho$ give an Euler stratification of $\P^{n_0+n_1+1} \setminus E$. The remaining observation is that the Euler characteristic of $\X^*_{f,z}$ along the closed set $E$ is constant and equal to zero. The number of strata is a consequence of Theorem \ref{thm:strataXveryaffP1}.
\end{proof}

\section{Future directions} \label{sec:7}

We end with seven open questions/research problems related to Euler stratifications. 
\begin{enumerate}
\setlength \itemsep{0em}
    \item As indicated in Section \ref{sec:3}, not much is known in general about the defining ideals of coincident root loci of binary forms. Understanding these better with the help of modern tools in tropical geometry, numerical algebraic geometry and computer algebra is interesting in its own right. It also helps to compute higher dimensional Euler stratifications via fibration arguments like Lemma \ref{lem:hirz}. Based on computational data, we conjecture that the ideal of the stratum on which a binary $n$-form has two roots of multiplicity two, with partition $2^21^{n-4}$, is generated by $3n-5$ forms of degree $2n-5$.
    \item Theorem \ref{thm:tevelev} is stated for smooth varieties of dimension $d$, but used only for curves. Can one find an effective algorithm to compute the Euler stratification of hyperplane sections of a smooth surface via Theorem \ref{thm:tevelev}? One could start with smooth toric surfaces.
    \item Step $4$ in Algorithm \ref{alg:polardisc} and Algorithm \ref{alg:eulerdiscveryaff} can be performed using numerical elimination, along the lines of the numerical algorithms in \cite{fevola2024principal,mizera2022landau}. The development and implementation of such numerical algorithms is crucial for the application of Euler stratifications to more challenging Feynman integrals or statistical models. 
    \item The closure of the incidence variety ${\cal Y}_\nu^\circ$ in \eqref{eq:Ynucirc} is taken in $\mathbb{P}^d \times Z$. Here $\mathbb{P}^d$ can be replaced by any compactification of $T \simeq (\mathbb{C}^*)^d$. For instance, instead of projective space, one could consider the projective toric variety associated to the Newton polytope of $F(x,z)$ for generic $z$. We expect this to facilitate the closure computation.
    \item In the spirit of the discussion after Proposition \ref{prop:strictcontained}, it would be interesting to understand the projection of $\overline{{\cal Y}^\circ} \cap V(F) \times \mathbb{P}_\nu^d$. This requires to extend the work of Sattelberger and van der Veer~\cite{sattelberger2023maximum}, and to drop some assumptions made in that paper. 
    \item It is conjectured in \cite[p.~3]{clarke2023matroid} that for the GKZ family of bilinear forms given by \eqref{eq:fbilinear}, the signed Euler characteristic can take any value between $1$ and its generic value. As explained in Section \ref{sec:62}, this is a conjecture about realizable matroid invariants. 
    \item What is the precise relation between the Euler discriminant and the Landau variety from \cite[Definition 54]{brown2009periods} or \cite[Definition 1]{helmer2024landau}? Furthermore, given the fact that Feynman integrals satisfy nice differential equations in the parameters $z$ \cite[Section 4]{fourlectures}, how do these varieties relate to the singular locus of the annihilating $D$-module? Can this be used to understand whether there exist families of the form ${\cal X}_{f}^* \rightarrow Z$, see \eqref{eq:Xveryaff2}, for which the Euler discriminant $\nabla_\chi$ has codimension-two components in $Z$? 
\end{enumerate}

\section*{Acknowledgements}
We thank Alexander Esterov for many useful discussions about singularity theory and multisingularity strata, Martin Helmer for answering our questions about Whitney stratifications, and Giorgio Ottaviani for pointing us to Theorem \ref{thm:Dimca-Papadima}. Thanks to Eliana Duarte, Serkan Ho\c{s}ten, Saiei-Jaeyeong Matsubara-Heo and Bernd Sturmfels for encouraging conversations. 

\bibliographystyle{abbrv}
\small
\bibliography{references.bib}

\normalsize 
\noindent{\bf Authors' addresses:}
\medskip

\noindent Simon Telen, MPI-MiS Leipzig
\hfill {\tt simon.telen@mis.mpg.de}

\noindent Maximilian Wiesmann, MPI-MiS Leipzig
\hfill {\tt maximilian.wiesmann@mis.mpg.de}

\end{document}